\newcommand{\ligne}{\vspace{1\baselineskip}}
\newcommand{\ph}{\phantomsection}
\newcommand{\cal}{\mathcal}
\newcommand{\R}{\mathbb  R}
\newcommand{\N}{\mathbb  N}
\newcommand{\M}{  \mathcal{M}  }
\newcommand{\eps}{\varepsilon}
\renewcommand{\epsilon}{\varepsilon}
\newcommand{\e}{  \text{e}   }
\newcommand{\V}{  \mathcal{V}   }
\newcommand{\D}{  \mathcal{D}   }
\newcommand{\dis}{\displaystyle}
\renewcommand{\a}{  \alpha   }
\renewcommand{\b}{  \beta   }
\newcommand{\s}{  \sigma   }
\renewcommand{\phi}{  \varphi  }
\renewcommand{\L}{  \Lambda   }
\newcommand{\<}{  \langle   }
\renewcommand{\>}{  \rangle   }
\renewcommand{\S}{  \mathbb{S}  }
\numberwithin{equation}{section}
\theoremstyle{plain}
\newtheorem{assumption}{Assumption}
\def\beq{\begin{equation}}   \def\eeq{\end{equation}}
\def\bea{\begin{eqnarray}}  \def\eea{\end{eqnarray}}
\renewcommand{\theequation}{\thesection.\arabic{equation}}
\newcounter{hran} \renewcommand{\thehran}{\thesection.\arabic{hran}}
\def\bmini{\setcounter{hran}{\value{equation}}
   ~\refstepcounter{hran}\setcounter{equation}{0}
    \renewcommand{\theequation}{\thehran\alph{equation}}\begin{eqnarray}}
\def\bminiG#1{\setcounter{hran}{\value{equation}}
\refstepcounter{hran}\setcounter{equation}{-1}
\renewcommand{\theequation}{\thehran\alph{equation}}
\refstepcounter{equation}\label{#1}\begin{eqnarray}}
\def\cc{{\cal C}}
\def\mm{{\cal M}}
\def\ss{{\cal S}}\def\vv{{\cal V}}
\def\R{\mathbb R}\def\N{\mathbb N}
  \def\S{\mathbb S}   
\def\D{\partial}\def\eps{\varepsilon}\def\phi{\varphi}
\def\norm#1{\left\Vert#1\right\Vert}
\def\set#1{\big\{#1\big\}}
\def\seq#1{\left<#1\right>}
\def\sep#1{\left(#1\right)}
\def\defegal{\stackrel{\text{\rm def}}{=}}
\def\Uzero{U_{0}}
\def\Vzero{V_0}
\def\Ve{V_e}
\def\V{V}
\def\Uinf{U_{\infty}}
\def\Vinf{V_{\infty}}
\def\mme{{\cal M}_{e}}
\def\mminf{{\cal M}_\infty}
\def\Be{{ B}_e}
\def\Kinf{K_\infty}
\def\Einf{E_\infty}
\def\Ke{K_{e}}
\def\Hess{\text{\rm{Hess}}}
\def\Id{\textrm{Id}}
\definecolor{gr}{rgb}   {0.,   0.69,   0.23 }
\definecolor{bl}{rgb}   {0.,   0.5,   1. }
\definecolor{mg}{rgb}   {0.85,  0.,    0.85}
\definecolor{yl}{rgb}   {0.8,  0.7,   0.}
\definecolor{or}{rgb}  {0.7,0.2,0.2}
\newcommand{\Bk}{\color{black}}
\author{ Fr\'ed\'eric H\'erau}
\address{Laboratoire de Math\'ematiques J. Leray, UMR  6629 du CNRS, Universit\'e de Nantes,
2, rue de la Houssini\`ere,
44322 Nantes Cedex 03, France}
\email{frederic.herau@univ-nantes.fr}
\author{ Laurent Thomann }
\address{
Laurent Thomann\\
Institut  \'Elie Cartan, Universit\'e de Lorraine, B.P. 70239,
F-54506 Vandoeuvre-l\`es-Nancy Cedex, France}
\email{laurent.thomann@univ-lorraine.fr}
 \title[On global existence for Vlasov-Poisson-Fokker-Planck]{On global existence and trend to the equilibrium for the Vlasov-Poisson-Fokker-Planck system with exterior confining potential}
\begin{document}
\frontmatter

 \begin{abstract}
 We prove a global existence result with initial data of low regularity, and prove the trend to the equilibrium  for the Vlasov-Poisson-Fokker-Planck system with small non linear term but with a possibly large exterior confining potential in dimension $d=2$ and $d=3$.  The proof relies on a fixed point argument using sharp estimates (at short and long time scales) of the semi-group associated to the Fokker-Planck operator, which were obtained by the first author.    \end{abstract}
\subjclass{35Q83; 35Q84;35B40}
\keywords{Vlasov-Poisson-Fokker-Planck equation; non self-adjoint operator; global solutions, return to equilibrium}
\thanks{F.H. is supported by the grant "NOSEVOL" ANR-2011-BS01019-01. L.T. is  supported   by the  grant  ``ANA\'E'' ANR-13-BS01-0010-03. \\
The authors warmly thank Jean Dolbeault for enriching discussions and are grateful  to  Laurent Di Menza who was at the origin of this collaboration.}
\maketitle
\mainmatter

 \section{Introduction and results}

 \subsection{Presentation of the equation}

Let $d=2$ or $d=3$. We consider the  Vlasov-Poisson-Fokker-Planck   system (VPFP for short)  with external potential,
  which reads, for $(t,x,v)\in [0,+\infty)\times \R^{d}\times \R^{d}$

\begin{equation}
  \label{vpfp}
  \left\{
  \begin{aligned}
   &  \D_{t}f+v.\D_{x}f -(\eps_{0} E+ \D_x V_e).\D_{v}f  -\gamma \D_{v}.\left(\D_{v}+v\right)f=0, \\
   & E(t,x)= -  \frac{1}{|\S^{d-1}|}   \frac{x}{|x|^d}\star_x \rho(t,x),  \ \     \ \    \text{where }\;\; \rho(t,x) = \int     f(t,x,v)dv, \\
    &   f(0,x,v) = f_0(x,v),
    \end{aligned}
    \right.
\end{equation}
where $x\mapsto V_e(x)$ is a given smooth confining potential (see Assumption~\ref{assumption1} below). The constant $\eps_0 \in
\R$ is the total charge of the system and  in \Bk the sequel we assume that either $\eps_0>0$ (repulsive case) or $\eps_0<0$ (attractive case) in the case $d=3$. The constant $\gamma>0$ is the friction-diffusion coefficient, and for simplicity we will take $\gamma=1$.

The unknown $f$ is the distribution function of the particles.
We assume that $f_0 \geq 0$ and that $\dis \int f_{0}(x,v)dxdv=1$, it is then easy to check that   once a good existence theory is given,   these properties are preserved, namely that for all $t\geq0$
\begin{equation*}
f \geq 0 \qquad \textrm{ and }\qquad \int  f(t,x,v)dxdv= 1,
\end{equation*}
and we refer to Section~\ref{basic} for more details and other basic results.\medskip

 This equation is a  model for a plasma submitted to an external confining electric field (in the repulsive case) and also a model for gravitational systems (in the attractive case).
 When there is no external potential ($V_e=0$), the equation has been exhaustively studied. First existence results were obtained by Victory and O'Dwyer in 2d~\cite{OV90} and by Rein and Weckler~\cite{RW92} in 3d for small data. Bouchut~\cite{Bou93} showed that the equation  is globally well-posed in 3 dimensions using the explicit kernel.
  The long time behavior (without any rate)  has  been studied with or  without external potential \Bk
 by Bouchut and Dolbeault in~\cite{BD95},  Carillo, Soler and Vazquez~\cite{CSV96}, and also by Dolbeault in~\cite{Dol99}.\medskip

When there is a confining potential, arbitrary polynomial trend to the equilibrium was established  in~\cite{DV01} where a first  notion of hypocoercivity~\cite{villani}  was developed and used later to the full model~\cite{DV05}. The exponential trend to the equilibrium  was
shown in the linear case (the Fokker-Planck equation) for a general external confining potential in~\cite{HN04} (see also~\cite{HelN04}).
So far, in the non-linear case,  there is no general result  about exponential trend to the equilibrium. In the case of the torus (and $V=0$), the strategy of Guo  can be applied to many models (see  e.g.~\cite{Guo02,Guo03a,Guo03b}). In the case when the potential is explicitly given by $V_e(x)=C |x|^{2}$, a recent result with small data is given in~\cite{HJ13}, following the micro-macro strategy of Guo.\medskip

In all previous cases (torus, $V_e = 0$ or polynomial of order $2$), mention that one can  compute explicitly the Green function of the Fokker-Planck operator and also that exact computations can be done thanks to vanishing commutators. Here instead we will rely on estimates (in short and long time) of the linear solution of the Fokker-Planck operator  obtained by the first author in~\cite{her06}, and our approach allows us to deal with a large class of confining potentials $V_e$. Indeed, in~\cite[ Theorem 1.3 \Bk]{her06} a first exponential trend to the equilibrium result for a VPFP type model was  given, but only for a mollified  non-linearity. We will prove here a global existence result in the full VPFP case,  with trend to equilibrium assuming that the initial condition $f_{0}$ is localised and has some Sobolev regularity and under the assumption that the electric field is perturbative in the sense that $|\eps_0|\ll 1$.\medskip

 Let us now precise our notations and hypotheses. We do not try to optimise the assumptions on the confining potential $V_e$ and first assume the following

 \begin{assumption}\ph\label{assumption1}
The potential $x\mapsto V_e(x)$ satisfies
\begin{equation*}
 e^{-\V_e} \in
\ss(\R^d), \ \ \ \text{with}  \ \ \ \V_e \geq 0 \ \  \textrm{ and
} \ \   \V_e'' \in W^{\infty,\infty}(\R^{d}).
\end{equation*}
\end{assumption}

Observe that the assumption $  \V_e\geq 0 $ can be relaxed by assuming that $V_e$ is bounded from below and adding to
it a sufficiently large constant.

 We now introduce the Maxwellian of the equation~\eqref{vpfp}
 \begin{equation} \label{defmme}
\mminf(x,v) = \frac{e^{-(v^2/2 + \V_e(x)+\eps_{0}\Uinf(x))}}{\int e^{-(v^2/2
+ \V_e(x)+\eps_{0}\Uinf(x))}dxdv},
\end{equation}
where $\Uinf$ is a solution of the following Poisson-Emden type equation
\begin{equation} \label{VVinf}
-\Delta \Uinf =    \frac{ e^{-(V_e+\eps_{0}\Uinf)}}{\int
e^{-(V_e(x)+\eps_{0}\Uinf(x))}dx}.
\end{equation}
Actually, one gets that under Assumption~\ref{assumption1} and $|\eps_{0}|$ small enough (assuming additionally that $\eps_{0}>0$ in the case $d=2$), the equation~\eqref{VVinf} has a
unique (Green) solution $\Uinf$ which belongs to $  W^{\infty,\infty}(\R^{d})$ uniformly w.r.t $|\eps_0|$ (see Propositions~\ref{corpeintro} and~\ref{corpeintro1} following results from~\cite{Dol91}).
 The Maxwellian  $\mminf$ is then  in $\ss(\R^{d}_{x}\times \R_{v}^{d})$ and is the unique $L^1$-normalised steady solution of equation\;\eqref{vpfp}.

In the case $d=2$ and $\eps_{0}<0$, existence and uniqueness of solutions to~\eqref{VVinf} are unclear, that's why we do not consider this case.\medskip

For convenience, we now introduce the effective potential at infinity
 \begin{equation} \label{defV}
 \Vinf \defegal V_e+\eps_{0}\Uinf \ \ \ \textrm{ so that } \; \;\mminf(x,v) = \frac{e^{-(v^2/2 + \Vinf(x))}}{\int e^{-(v^2/2
+ \Vinf(x))}dxdv}.
\end{equation}
The second assumption on $V_e$ is the following

 \begin{assumption}\ph\label{assumption2}
The so-called  Witten operator
$W= -\Delta_{x}+ {|\partial _{x} \Ve|^{2}}/{4}-{\Delta_{x}\Ve}/{2}$
has a spectral gap    in $L^{2}(\R^{d})$. We denote by $\kappa_0>0$ the minimum of this spectral gap and $ d/2$.
\end{assumption}

\begin{exem} As an example, we can check that if $V_e$ satisfies Assumption~\ref{assumption1} and is such that
$${|\D_x V_e(x)|\underset{|x| \longrightarrow \infty}{\longrightarrow} +\infty}$$
 then it satisfies also Assumption~\ref{assumption2}  since it has a compact   resolvent. \Bk
\end{exem}

 We introduce now the functional framework on which our analysis is done.
We consider  the weighted space  $B$ built from the standard $L^2$
space after conjugation with a half power of the Maxwellian
 \begin{equation} \label{defb2}
B \defegal \mminf^{1/2} L^2 =  \big\{ f \in \mathcal{S}'(\R^{2d}) \text{ s.t. }
f/\mminf \in L^2(\mminf dxdv)\big\}.
\end{equation}
We define the natural scalar product
\begin{equation*}
\<f,g\>=\int fg \mminf^{-1} dxdv,
\end{equation*}
and the corresponding norm
\begin{equation*}
\begin{split}
& \norm{f}_{B}^2= \<f,f\>=\int f^2 \mminf^{-1} dxdv.
\end{split}
\end{equation*}
Next, consider the Fokker-Planck operator associated to the potential $\Vinf$
 defined by
\begin{equation} \label{Ke}
\Kinf = v.\D_{x} -\D_x \Vinf(x).\D_{v}- \gamma \D_{v}.\left(\D_{v}+v\right).
\end{equation}
The last object we need before writing our equation in a suitable way
 is the limit electric field
\begin{equation*}
\Einf(x)=\partial_{x}\Uinf(x)=-  \frac{1}{|\S^{d-1}|}   \frac{x}{|x|^d}\star_x  \int \mminf(x,v) dv.
\end{equation*}
 With all the previous notations, the VPFP equation~\eqref{vpfp} can be rewritten
 \begin{equation}
  \label{vpfp1}
  \left\{
  \begin{aligned}
   &  \D_{t}f+\Kinf f=\eps_0(E-\Einf)\partial_{v}f, \\
   & E(t,x)    = -  \frac{1}{|\S^{d-1}|}   \frac{x}{|x|^d}\star_x \rho(t,x),  \ \ \ \ \text{where }\;\; \rho(t,x) = \int     f(t,x,v)dv, \\
    &   f(0,x,v) = f_0(x,v).
    \end{aligned}
    \right.
\end{equation}
We define  the \Bk  operator
 \begin{equation*}
 \Lambda_{x}^2=  - \D_x.\big(\D_x+ \D_x \Vinf \big)  +1 \Bk
 \end{equation*}
which is up to a conjugation with $\mminf^{1/2}$ the Witten operator introduced in Assumption~\ref{assumption2} but defined on $B$, and
 \begin{equation*}
 \Lambda_{v}^2 = - \D_v.(\D_v+v)  +1 \Bk,
 \end{equation*}
 which is again up to a conjugation   the harmonic oscillator in velocity. They both are non-negative selfadjoint unbounded operators in $B$.
 We also introduce
 \begin{equation*}
 \Lambda^2= -  \D_x.\big(\D_x+ \D_x \Vinf\big) -  \D_v.(\D_v+v) +1 = \Lambda_x^2 +  \Lambda_v^2  -1. \Bk
 \end{equation*}
 It is clear that %$\Lambda_x^2$, $\Lambda_v^2$ and $\Lambda^2$ are bounded  from \Bk below by $1$ in $B$ and that 
 $$
 1 \leq \Lambda^2_{x },\; \Lambda^2_{v }  \leq \Lambda^2. 
 $$ \Bk
 As we mentioned previously,  if $V_e$ satisfies Assumptions~\ref{assumption1} and~\ref{assumption2}, then $\Vinf = V_e+ \epsilon_0 \Uinf$ also does, and we check in Subsection\;\ref{spectralgap} that the operator 
  $$ 
 -  \D_x.\big(\D_x+ \D_x \Vinf\big) -  \D_v.(\D_v+v)  = \Lambda^2-1 
 $$ \Bk
  has $0$ as single eigenvalue and a spectral gap bounded in $B$ which is, uniformly w.r.t\;$|\eps_0|$ small, bounded from below by  $\kappa_0/2$.  \medskip

In the sequel, we will need the anisotropic chain of  Sobolev spaces: for $\a,\beta\geq0$
\begin{equation}\label{defB}
B^{\a,\b} =B^{\a,\b}_{x,v}(\R^{2d})=\big\{f\in B \,:\;\;   \Lambda^{\a}_{x}  f\in B^{}\;\;\text{and}\;\; \Lambda^{\b}_{v}f\in B^{}\big\},
\end{equation}
and we endow this space by the norm
\begin{equation*}
 \|f\|_{B^{\a,\beta}}= %\|f\|_{B^{}}+
\|\Lambda^{\a}_{x} f\|_{B^{}}+\|\Lambda^{\b}_{v} f\|_{B^{}}. \Bk
\end{equation*}
In the case $\alpha=\beta$ we simply define
\begin{equation*}
B^{\a}=B^{\a,\a}_{x,v}(\R^{2d})=\big\{f\in B \,:\;\;   \Lambda^{\alpha}f\in B^{}\big\},
\end{equation*}
with the norm
\begin{equation*}
 \|f\|_{B^{\a}}= \|\Lambda^{\alpha} f\|_{B^{}} \sim \|f\|_{B^{\a,\a}}. \Bk
\end{equation*}
We observe that $\mminf\in B^{\a,\b}$ for all $\a,\b\geq 0$, since we have $\mminf \in \mathcal{S}(\R^{2d})$.

 \subsection{Main results} We are now able to state our global well-posedness results.
\begin{theo}\ph \label{thm1}
Let $d=2$ and let $f_{0}\in B(\R^4) $. Assume moreover that Assumptions~\ref{assumption1} and~\ref{assumption2} are  satisfied. Then if $\eps_0>0$ is small enough, there exists  a unique global mild solution $f$ to~\eqref{vpfp} in the class
\begin{equation*}
 f\in \mathcal{C}\big(\,[0,+\infty[ \,;\,  B(\R^{4}) \big).
\end{equation*}
Moreover,  the following convergence to equilibrium holds true
\begin{equation*}
 \Vert f(t)-\M_{\infty} \Vert_{B^{}}\leq  C_{0}  e^{-\kappa_0 t/c},\qquad \forall t \geq 1,
\end{equation*}
and
\begin{equation*}
 \Vert E(t)-E_{\infty} \Vert_{L^{\infty}(\R^{2})}\leq C_{1}  e^{-\kappa_0 t/c},\qquad \forall t\geq 1.
\end{equation*}
\end{theo}

By mild, we mean $f$ and $E$ which satisfy the integral formulation of~\eqref{vpfp1}, namely
\begin{equation}
  \label{int}
  \left\{
  \begin{aligned}
   &  f(t) = e^{-\Kinf}f_0 +\eps_{0} \int_0^t e^{-(t-s)\Kinf}  (E(s)-\Einf)\D_v f(s)ds, \\
   & E(t)    = -  \frac{1}{|\S^{d-1}|} \frac{x}{|x|^d}\star_x \int f(t)dv.
    \end{aligned}
    \right.
\end{equation}
~

In the case $d=3$, we need to assume more regularity on the initial condition, but the known results about the uniqueness of the Poisson-Emden equation (see Subsection~\ref{sect32}) allow to consider also the case $\eps_{0}<0$.\medskip

 Denote by
\begin{equation}\label{hyp}
\dis U_{0}=\frac{1}{4\pi |x|} \star_x \int f_{0} dv,
\end{equation}
which is such that  $\dis \Delta U_{0}  =\int f_{0} dv $. Then
\begin{theo} \ph \label{thm2}
Let $d=3$  and     $1/2<a<2/3$. Assume     that $f_{0}\in B^{a,a}(\R^6)\cap L^{\infty}(\R^{6}) $ is such that $U_{0}\in W^{2,\infty}(\R^{3})$.  Assume moreover that Assumptions~\ref{assumption1} and~\ref{assumption2} are  satisfied. Then if $\vert \eps_0\vert$ is small enough, there exists  a unique global mild solution $f$ to~\eqref{vpfp} in the class
\begin{equation*}
 f\in \mathcal{C}\big(\,[0,+\infty[ \,;\,  B^{a,a}(\R^{6}) \big)\cap L_{loc}^{\infty}\big(\,[0,+\infty[ \,;\,  L^{\infty}(\R^{6}) \big).
\end{equation*}
Moreover, for all  $ a\leq  \a<2/3$ and $ a\leq   \beta<1$ such that $3\a-1<\beta <1$
\begin{equation}\label{reg}
 f  \in   \mathcal{C}\big(\,]0,+\infty[ \,;\,  B^{\alpha,\beta}(\R^{6})\big),
\end{equation}
and  the following convergence to equilibrium holds true
\begin{equation*}
 \Vert f(t)-\M_{\infty} \Vert_{B^{\alpha,\beta}}\leq  C_{0} e^{-\kappa_0 t/c},\qquad \forall t \geq 1,
\end{equation*}
and
\begin{equation*}
 \Vert E(t)-E_{\infty} \Vert_{L^{\infty}(\R^{3})}\leq C_{1}  e^{-\kappa_0 t/c},\qquad \forall t\geq 1.
\end{equation*}
\end{theo}

 In  the previous lines, the constants  $c,C_{1},C_{2}>0$ only depend on $\Vert \Vinf \Vert_{W^{2,\infty}}$  where $\Vinf$ was defined in~\eqref{defV}, on~$\Vert U_0 \Vert_{W^{2,\infty}}$ and on  $f_{0}$.

Notice that in Theorem~\ref{thm2}, the parameters $(\a,\b)$ can be chosen independently from $a$. It is likely that the assumption $a<2/3$ is  technical, but our proof needs that $\beta<1$ (see e.g. Corollary~\ref{coro25}).  Since in this work we focus on low regularity issues, we did not try to relax this hypothesis.

It is likely that the assumption made on $U_{0}$ is technical. It is needed here in order to guarantee that the linearised equation near $t=0$ enjoys reasonable spectral estimates. Observe (see Remark~\ref{rema1} for more details), that the assumption  $f_{0}\in B^{a,a}(\R^6)\cap L^{\infty}(\R^{6}) $ alone ensures that $U_{0}\in W^{2,p}(\R^{3})$ for any $2\leq p<+\infty$.

 An analogue of the regularizing estimate\;\eqref{reg} can also be obtained in Theorem\;\ref{thm1}. This can be proven by getting estimates in some spaces $B^{\alpha,\beta}_{x,v}$ as in the proof of Theorem\;\ref{thm2} (see Section\;\ref{sect5}). We did not include it here in order to simplify the argument.\Bk
 \medskip

The proof uses estimates of $\e^{-t\Kinf}$ in the space $B^{}$, obtained in~\cite{her06} by  the first author. Theorem\;\ref{thm2} extends~\cite[Theorem 1.3]{her06} where he considered a regularised version of the electric field $E$ in~\eqref{vpfp}, which was so that $E(t)\in L^{\infty}(\R^3)$ for any $f\in B$. Here we tackle this difficulty by using the Sobolev regularity of $f$ and a gain given by the integration in time. The proof relies on   a fixed point argument in a space based on $B^{\alpha,\b}$ in the $(x,v)$ variables, and allowing an exponential decay in time.\medskip

As a consequence of Theorems~\ref{thm1} and~\ref{thm2}, we directly obtain the exponential decay of the relative entropy. Let us define
\begin{equation*}
 H(f(t),\mminf)= \iint f(t) \ln \Big(\frac{f(t)}{\mminf}\Big)dxdv,
\end{equation*}
then

\begin{coro}
Let $d=2$ or $d=3$. Then under the assumptions of Theorem\;\ref{thm1} or Theorem\;\ref{thm2}, the corresponding solution $f$ of~\eqref{vpfp} satisfies
\begin{equation*}
 0\leq H(f(t),\mminf) \leq C   e^{-\kappa t/c},
\end{equation*}
where $C,c>0$ only depend on second order derivatives of $\Ve +\eps_0\Uinf$  and on $f_{0}$.\Bk
\end{coro}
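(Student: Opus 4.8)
The plan is to deduce the exponential decay of the relative entropy $H(f(t),\mminf)$ from the $B$-norm (respectively $B^{\alpha,\beta}$-norm) convergence already provided by Theorem~\ref{thm1} or Theorem~\ref{thm2}, via the Csisz\'ar--Kullback--Pinsker inequality together with a logarithmic Sobolev-type control. First I would record that $H(f(t),\mminf)\geq 0$ is simply Jensen's inequality (the function $s\mapsto s\ln s$ is convex and $f$, $\mminf$ are probability densities), so only the upper bound needs work. The natural route is to write, with $h=f/\mminf$,
\begin{equation*}
H(f(t),\mminf)=\iint h\ln h\,\mminf\,dxdv,
\end{equation*}
and to compare this to $\|f(t)-\mminf\|_B^2 = \iint (h-1)^2\mminf\,dxdv$. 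The elementary inequality $s\ln s \leq s-1 + (s-1)^2$ valid for $s\ge 0$ (or rather $s\ln s - s + 1 \le (s-1)^2$, which fails for large $s$) is not quite enough globally, so the cleaner approach is to split the integral over the region $\{h\le 2\}$, where one uses $s\ln s - s + 1 \lesssim (s-1)^2$, and the region $\{h>2\}$, which must be handled by an $L^\infty$ or higher-integrability bound on $f(t)$.

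Here the key steps are: (i) for $d=2$, since $\mminf\in\mathcal S$ and $f(t)\to\mminf$ in $B$, I would first upgrade to a uniform-in-$t$ bound ensuring $f(t)$ stays in a fixed ball of $B$, then note that on the tail region the integrand $h\ln h\,\mminf$ is controlled using $h\ln h \le h^{3/2}$ for $h$ large together with $\iint h^{3/2}\mminf\,dxdv = \iint f^{3/2}\mminf^{-1/2}\,dxdv$, which is finite and bounded by interpolation between $\|f\|_B$ and a weighted $L^\infty$ bound — the latter following for $d=2$ from the parabolic smoothing of $e^{-t\Kinf}$; (ii) for $d=3$, the hypothesis $f_0\in L^\infty$ and the bound $f\in L^\infty_{loc}([0,\infty);L^\infty)$ from Theorem~\ref{thm2}, combined with the $B^{\alpha,\beta}$ decay, give directly that on $\{h>2\}$ one has $h\ln h\,\mminf \lesssim (1+\ln h)\,(h-1)\,\mminf$ and $\ln h \lesssim \ln(1/\mminf) + \ln\|f\|_{L^\infty}$, so the tail contributes at most $C(1+t)\,\|f(t)-\mminf\|_B$, which still decays exponentially after absorbing the polynomial factor into a slightly smaller rate; (iii) assemble the two regions to get $H(f(t),\mminf)\le C\,\big(\|f(t)-\mminf\|_B + \|f(t)-\mminf\|_B^2\big)\,(1+t) \le C'e^{-\kappa t/c}$ for a possibly smaller $\kappa$.

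The main obstacle I anticipate is the tail region $\{f \gg \mminf\}$: controlling $\iint_{\{h>2\}} h\ln h\,\mminf$ requires more than the $B$-norm convergence, because $s\ln s$ grows superlinearly while $(s-1)^2$ only controls it up to bounded $s$. For $d=3$ this is cleanly resolved by the propagated $L^\infty$ bound on $f$, at the cost of a harmless polynomial-in-$t$ prefactor (absorbed by lowering $\kappa$); for $d=2$ one needs the ultracontractivity of the Fokker--Planck semigroup to get a weighted-$L^\infty$ bound on $f(t)$ that is uniform for $t\ge 1$, which follows from the short-time smoothing estimates of~\cite{her06} applied after time $1/2$ and the uniform $B$-bound. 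Once that bound is in hand the argument is purely elementary. I would close by remarking that the constants $C,c$ depend only on $\|\Vinf\|_{W^{2,\infty}}$ (through the semigroup estimates) and on $f_0$ (through the initial size and regularity), as asserted.
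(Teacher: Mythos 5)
There is a genuine error at the pivot of your argument. You claim that the elementary inequality $s\ln s - s + 1 \le (s-1)^2$ ``fails for large $s$'' and that one therefore needs to split off the region $\{h>2\}$ and control it by $L^\infty$-type information. This is backwards: the inequality $s\ln s - s + 1 \le (s-1)^2$ is equivalent to $s\ln s\le s^2-s$, i.e.\ to $\ln s\le s-1$ multiplied by $s>0$, and hence holds for \emph{all} $s> 0$ (and at $s=0$ with the convention $0\ln 0=0$). What fails for large $s$ is the \emph{reverse} inequality $(s-1)^2\lesssim s\ln s-s+1$, which is what one needs for Csisz\'ar--Kullback--Pinsker, i.e.\ to bound a norm \emph{by} the entropy --- the opposite of what is asked here. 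Consequently, writing $h=f/\mminf$ and using $\iint f\,dxdv=\iint\mminf\,dxdv=1$,
\begin{equation*}
H(f(t),\mminf)=\iint \big(h\ln h-h+1\big)\,\mminf\,dxdv\;\le\;\iint (h-1)^2\,\mminf\,dxdv=\Vert f(t)-\mminf\Vert_B^2,
\end{equation*}
and the corollary follows in one line from the exponential decay of $\Vert f(t)-\mminf\Vert_B$ given by Theorems~\ref{thm1} and~\ref{thm2} (the lower bound $H\ge 0$ by Jensen is correct as you state). This is precisely the argument of \cite[Corollary~1.4]{her06}, to which the paper refers; no splitting, no $L^\infty$ control and no loss in the rate $\kappa$ are needed.

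Beyond being unnecessary, the detour you propose does not close in dimension $d=2$: Theorem~\ref{thm1} assumes only $f_0\in B$, so no $L^\infty$ bound on $f$ is propagated (Corollary~\ref{lemMax1} requires $f_0\in L^\infty$ and a bounded field), and the regularity one could bootstrap, $B^{\a,\b}$ with $\a<2/3$ and $\b<1$, does not embed into $L^\infty(\R^4)$; the ``ultracontractivity'' of $e^{-t\Kinf}$ applies to the linear flow but not directly to the Duhamel term of the nonlinear solution, which is only controlled in low-order weighted Sobolev norms. So if the splitting had really been required, your proof would be incomplete in the two-dimensional case.
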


We refer to~\cite[Corollary 1.4]{her06} for the proof of this result.

\subsection{Notations and plan of the paper}

 \begin{enonce*}{Notations}
 In this paper $c,C>0$ denote constants the value of which may change
from line to line. These constants will always be universal, or uniformly bounded with respect to the other parameters.
\end{enonce*}

The rest of the paper is organised as follows. In Section\;\ref{sect2} we prove some linear estimates on $\e^{-tK}$ (where $K$ is a generic linear Fokker-Planck operator). In Section\;\ref{sect3} we gather some estimates on solutions of~\eqref{vpfp}. Finally,  Sections~\ref{sect4} and~\ref{sect5} are devoted to the proofs of Theorems~\ref{thm1} and~\ref{thm2}   with  fixed points arguments.

%%%%%%%%%%%%%%%%%%%%%
%%%%%%%%%%%%%%%%%%%%%
\section{Semi-group estimates}\label{sect2}
In this section, we denote by $V$ a generic potential satisfying  Assumptions~\ref{assumption1} and~\ref{assumption2}. We also denote by $K$ the associated generic  linear \Bk  Fokker-Planck operator
$$
K = v.\D_{x}f -\D_x V.\D_{v}  - \D_{v}.\left(\D_{v}+v\right).
$$
Similarly, the   operators   $\Lambda_x^2 = -\D_x(\D_x + \D_x V)+1$, $\Lambda^2 = \Lambda_x^2 + \Lambda_v^2 -1$,  \Bk the normalized Maxwellian $\mm(x,v) =
e^{-(V(x) + v^2/2)}$ and  spaces of type $B^{\a, \b}$ are built with respect to this generic potential~$V$. For convenience, we also denote by $X_0 = v.\D_{x}f -\D_x V.\D_{v}$.

The aim of this section is to state some estimates of $e^{-tK}$ in $B-$type norms. These are  consequences of\;\cite{her06}. In all the following we pose
 $$
 \kappa=\kappa_0/C_0,
 $$
 where $\kappa_0$ is the spectral gap of the operator $W$ defined in Assumption~\ref{assumption2} (with $V$ as a potential), and\;$C_0$ is a large constant depending only on derivatives of $V''$ 
 explicitly given   in~{\cite[Theorem\;0.1]{HN04}}. \Bk

The operator $K$ is maximal accretive in $B$ (see e.g.~\cite[ Theorem\;5.5]{HelN04}). This enables us to define $\e^{-tK}$ and to prove that
\begin{equation}\label{maxi}
\norm{e^{-tK}}_{B \rightarrow B} \leq 1.
\end{equation}
 Following~\cite[ Theorem 3.1]{HN04}, operator   $\e^{-tK}\longrightarrow Id$ when $t\longrightarrow 0$, strongly in $B^{a,a}$ for any ${a\geq0}$.
Observe that all the estimates in this section are independent of the dimension $d$.
 For a complete analysis of the linear Fokker-Planck operator we refer to~\cite{HN04} or~\cite{HelN04}. We now give some regularizing estimates for the semi-group associated to $K$, in the spirit of~\cite[ Section 3\Bk]{her06}.

\begin{prop} \ph \label{prop21} There exists $C>0$ so that  for all $\a, \b \in [0,1]$  and   all $t > 0$
\begin{equation} \label{stxdecay}
\|\L^{\a}_{x} \e^{-tK}\|_{B^{}\to B^{}}\leq C(1+t^{-3\a/2})%e^{-\alpha \kappa t}
,\quad  \| \e^{-tK}\L^{\a}_{x}\|_{B^{}\to B^{}}\leq C(1+t^{-3\a/2})
\end{equation}
and
\begin{equation} \label{stvdecay}
\|\L^{\b}_{v} \e^{-tK}\|_{B^{}\to B^{}}\leq C(1+t^{-\b/2}),\quad  \| \e^{-tK}\L^{\b}_{v}\|_{B^{}\to B^{}}\leq C(1+t^{-\b/2}).
\end{equation}
In the previous bounds, the  constant \Bk $C$ only  depends
 on a finite number of derivatives of $V$. % $\norm{V''}_{L^\infty}$.
\end{prop}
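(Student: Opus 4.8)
The plan is to derive these regularizing estimates from the corresponding hypoelliptic smoothing bounds in \cite{her06}, transported to the functional space $B$. The starting point is the basic accretivity bound \eqref{maxi}, together with the short-time hypoelliptic estimates: there it is shown that, for $0<t\leq 1$, one has $\|\L_x^{\a}e^{-tK}\|_{B\to B}\lesssim t^{-3\a/2}$ and $\|\L_v^{\b}e^{-tK}\|_{B\to B}\lesssim t^{-\b/2}$, the asymmetry in the powers $3\a/2$ versus $\b/2$ reflecting the fact that the $x$-variable is reached only through one bracket $[X_0,\D_v]\sim\D_x$ in H\"ormander's sense, so smoothing in $x$ costs three times as much as in $v$. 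First I would quote the case $\a=\b=1$ from \cite[Section 3]{her06} (the full-derivative smoothing estimate), and then recover the fractional range $\a,\b\in[0,1]$ by operator interpolation: $\L_x^{\a}=(\L_x^2)^{\a/2}$ is an interpolation family between $\Id$ (at $\a=0$, bounded by \eqref{maxi}) and $\L_x$ (at $\a=1$), and likewise for $\L_v^{\b}$; since $\L_x^2$ and $\L_v^2$ are non-negative selfadjoint in $B$, complex interpolation of the analytic families $t\mapsto \L_x^{\a}e^{-tK}$ gives $\|\L_x^{\a}e^{-tK}\|_{B\to B}\lesssim t^{-3\a/2}$ for $0<t\le1$, and similarly $t^{-\b/2}$ for $\L_v^{\b}$.

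For the large-time regime $t\geq 1$, I would use the semigroup property to split $e^{-tK}=e^{-(t-1/2)K}\,e^{-K/2}$ (or more simply $e^{-tK}=e^{-(t-1)K}e^{-K}$), apply the short-time bound with $t=1/2$ to the factor carrying the derivative, i.e. $\|\L_x^{\a}e^{-K/2}\|_{B\to B}\le C$, and then absorb the remaining propagator $e^{-(t-1/2)K}$ using \eqref{maxi}. This yields $\|\L_x^{\a}e^{-tK}\|_{B\to B}\le C$ for all $t\ge 1$, and combining the two regimes gives the uniform bound $C(1+t^{-3\a/2})$; the argument for $\L_v^{\b}$ is identical, producing $C(1+t^{-\b/2})$. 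The right-hand estimates (derivative on the right of $e^{-tK}$) follow by duality: the formal adjoint $K^{*}$ of $K$ in $B$ is again a Fokker-Planck operator associated to the same potential $V$ (only the drift term changes sign in $v$, which is an isometric conjugation on $B$), so it satisfies the same Assumptions \ref{assumption1}--\ref{assumption2} and hence the same left-hand bounds; transposing $\|\L_x^{\a}e^{-tK^{*}}\|_{B\to B}\lesssim 1+t^{-3\a/2}$ and using that $\L_x^{\a}$ is selfadjoint on $B$ gives $\|e^{-tK}\L_x^{\a}\|_{B\to B}\lesssim 1+t^{-3\a/2}$, and similarly in $v$.

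The main obstacle is the short-time hypoelliptic smoothing input at $\a=\b=1$ and the precise powers $t^{-3/2}$, $t^{-1/2}$: this is the genuinely hard estimate, but it is exactly what is established in \cite{her06} (via subelliptic/nilpotent-Lie-group techniques for the Fokker-Planck operator), so here it is \emph{quoted} rather than reproved. The only points requiring care on our side are: checking that the constants in these bounds depend only on a finite number of derivatives of $V$ (uniform in the later application to $V=\Vinf$ with $|\eps_0|$ small, using that $\|\Vinf\|_{W^{\infty,\infty}}$ is controlled uniformly), and justifying that complex interpolation applies to the operator families in question — for which it suffices that $\L_x^2,\L_v^2$ are non-negative selfadjoint on the Hilbert space $B$, so their purely imaginary powers are bounded, and Stein interpolation applies to the analytic family $z\mapsto \L_x^{z}e^{-tK}$ on the strip $0\le\Re z\le 1$.
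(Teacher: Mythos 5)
Your proposal is correct and follows essentially the same route as the paper: quote the short-time hypoelliptic smoothing estimates of \cite[Proposition 3.1]{her06} for the full derivatives $(\D_v+v)e^{-tK}$ and $(\D_x+\D_x V)e^{-tK}$ (which give the $\L_v$, $\L_x$ bounds at order one via $\|\L_v f\|_B^2=\|(\D_v+v)f\|_B^2+\|f\|_B^2$), interpolate with the accretivity bound \eqref{maxi} to reach fractional orders, and obtain the right-multiplied estimates by passing to the adjoint $K^*$ and using the selfadjointness of $\L_x^\a$, $\L_v^\b$. The only cosmetic differences are that the paper does not need your short-time/long-time splitting, since the quoted bounds $C(1+t^{-1/2})$ and $C(1+t^{-3/2})$ already hold for all $t>0$, and it leaves the interpolation step implicit where you invoke Stein interpolation explicitly.
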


\begin{rema} \ph Note that the exponents $1/2$ in~\eqref{stvdecay}  and $3/2$ in~\eqref{stxdecay} when $\alpha = 1$  are optimal at least in the  case $V= 0$ \Bk and in the case when %$V(x) = |x|^2$ or more generally in the case when
$V$ is a  definite quadratic form in $x$. This can be checked since in these  both cases,
the Green kernel of $e^{-tK}$   is explicit. In the case $V=0$ we refer  to\;\cite{Bou93}, and when $V$ is quadratic, we refer to the   general Mehler formula given in~\cite[ Section 4 \Bk]{Hor95}.
\end{rema}

\begin{proof}[Proof of Proposition~\ref{prop21}]
We first prove the estimate~\eqref{stvdecay}.  In~\cite[Proposition\;3.1]{her06}, reinterpreted in our framework, reads
\begin{equation} \label{her06v}
\norm{(\D_v+v) e^{-tK}}_{B \rightarrow B} \leq  C(1+t^{-1/2}).
\end{equation} \Bk
%with $\kappa$ depending on $\norm{V''}_{L^\infty}$ and  on \Bk the spectral gap of $\Lambda^2$. With the same arguments (see theorem, we may check that the  following analogous estimate also holds true
%\begin{equation*}
%\norm{(\D_v+v) e^{-tK}}_{B \rightarrow B} \leq  C(1+t^{-1/2})e^{-\kappa t}.
%\end{equation*}
For $f$ a solution of the equation 
$$ \partial_t f+ K f=0,\quad f(t=0)=f_0,$$
with  normalized \Bk initial condition $f_0 \in \cc_0^\infty$, and using the regularization property of $e^{-tK}$, we have for $t>0$
 \begin{eqnarray*}
\norm{ \Lambda_v f(t)}_{B \rightarrow B}^2
& =& \<\Lambda_v^2 f(t),f(t)\> \\
& =& \norm{(\D_v+v)f(t)}_{B \rightarrow B}^2 + \norm{f(t)}_{B \rightarrow B}^2 \\
& \leq & C(1+t^{-1/2})^2+1 \\
& \leq  &C'(1+t^{-1/2})^2.
\end{eqnarray*} \Bk
Using that $B^{0,0}_{x,v} = B$ and~\eqref{maxi}, we therefore have that
$$
\norm{e^{-tK}}_{B \rightarrow B^{0,1}_{x,v}}^2  \leq C'(1+t^{-1/2}), \ \ \
\norm{e^{-tK}}_{B \rightarrow B^{0,0}_{x,v}}^2  \leq C''
$$
and by interpolation we get that for all $0\leq \beta\leq 1$
$$
\norm{e^{-tK}}_{B \rightarrow B^{0,\beta}_{x,v}}^2  \leq C(1+t^{-1/2})^\beta \leq C_b(1+t^{-\beta/2})
$$
which reads
$$
\|\L^{\b}_{v} \e^{-tK}\|_{B^{}\to B^{}}\leq C_{\b}(1+t^{-\b/2})
$$
which is the first result. For the converse estimate, we use that $K^*$, the adjoint of $K$ in $B$ given by $K^* = -X_0 -\D_v.(\D_v+v)$, has the same properties as $K$ so that for all $t>0$,
$$
\|\L^{\b}_{v} \e^{-tK^*}\|_{B^{}\to B^{}}\leq C'_{\b}(1+t^{-\b/2}).
$$
 Taking  \Bk the adjoints of this yields
$$
\| \e^{-tK}\L^{\b}_{v}\|_{B^{}\to B^{}}\leq C'_{\b}(1+t^{-\b/2}).
$$

Concerning the estimates involving $\Lambda_x$, the proof is exactly the same
as the preceding one with  $\Lambda_v$ replaced by $\Lambda_x$, $\beta$ replaced
by $3\alpha$, $-\D_v.(\D_v + v)$ replaced by $-\D_x.(\D_x + \D_x V(x))$ and using the result   from~\cite[Proposition\;3.1]{her06} \Bk
$$
\norm{(\D_x + \D_x V(x)) e^{-tK}}_{B \rightarrow B} \leq  C(1+t^{-3/2}),
$$
instead of~\eqref{her06v}.
This concludes the proof.
\end{proof}

From Proposition~\ref{prop21}, it is easy to deduce the following

\begin{coro} \ph \label{coro23}
Let $\a, \b \in [0,1]$. Then
\begin{equation*}
 \| \Lambda^{\a}_{x}\e^{-(t-s)K} \Lambda^{1-\b}_{v} \|_{B \to B } \leq  C\big((t-s)^{-1/2+\b/2-3\a/2}+1\big) ,
\end{equation*}
and
\begin{equation*}
 \| \Lambda^{\b}_{v}\e^{-(t-s)K} \Lambda^{1-\b}_{v} \|_{B \to B } \leq  C\big((t-s)^{-1/2}+1\big) .
\end{equation*}
\end{coro}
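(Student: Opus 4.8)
The plan is to derive Corollary~\ref{coro23} from Proposition~\ref{prop21} by a simple composition-of-operators argument, using the semi-group property to split the time interval in half. For the first estimate, I would write $\e^{-(t-s)K} = \e^{-(t-s)K/2}\e^{-(t-s)K/2}$ and insert this factorization, so that
\begin{equation*}
\Lambda^{\a}_{x}\e^{-(t-s)K}\Lambda^{1-\b}_{v} = \big(\Lambda^{\a}_{x}\e^{-(t-s)K/2}\big)\big(\e^{-(t-s)K/2}\Lambda^{1-\b}_{v}\big).
\end{equation*}
The first factor is bounded on $B$ by $C(1 + ((t-s)/2)^{-3\a/2})$ by the first inequality in~\eqref{stxdecay}, and the second factor is bounded by $C(1 + ((t-s)/2)^{-(1-\b)/2})$ by the second inequality in~\eqref{stvdecay} applied with exponent $1-\b \in [0,1]$. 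Multiplying these two bounds and absorbing the harmless factors of $2$ into the constant, one gets a sum of four terms whose dominant contribution as $t-s\to 0$ is $(t-s)^{-3\a/2-(1-\b)/2} = (t-s)^{-1/2+\b/2-3\a/2}$, while the constant term $1$ dominates as $t-s\to\infty$; hence the claimed bound $C\big((t-s)^{-1/2+\b/2-3\a/2}+1\big)$.

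For the second estimate the argument is identical: factor $\e^{-(t-s)K}$ into two halves, bound $\Lambda^{\b}_v \e^{-(t-s)K/2}$ by $C(1+(t-s)^{-\b/2})$ using the first inequality in~\eqref{stvdecay}, and bound $\e^{-(t-s)K/2}\Lambda^{1-\b}_v$ by $C(1+(t-s)^{-(1-\b)/2})$ using the second inequality in~\eqref{stvdecay}. The product of these two bounds again yields a sum of four monomials in $(t-s)$, whose most singular term near $0$ is $(t-s)^{-\b/2-(1-\b)/2} = (t-s)^{-1/2}$, independent of $\b$, and whose behavior at infinity is governed by the constant $1$; this gives $C\big((t-s)^{-1/2}+1\big)$. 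Note that here the two exponents conspire so that $\b$ drops out, which is why the right-hand side of the second inequality does not depend on $\b$.

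I do not anticipate any genuine obstacle: the only point requiring a line of care is the elementary inequality $(1+a^{r})(1+a^{s}) \leq C(1 + a^{r+s})$ for $a>0$ and $r,s\geq 0$ (with $C$ absolute), which handles the cross terms — indeed $a^{r} \leq 1 + a^{r+s}$ and similarly for $a^{s}$, so all four terms in the expansion are bounded by $1 + a^{r+s}$. One should also record that all exponents appearing ($3\a/2$, $(1-\b)/2$, $\b/2$) are nonnegative under the hypothesis $\a,\b\in[0,1]$, so Proposition~\ref{prop21} applies directly with the stated ranges, and the constants inherit the dependence on finitely many derivatives of $V$ from that proposition.
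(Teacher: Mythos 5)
Your argument is correct and is essentially the paper's own proof: both split $\e^{-(t-s)K}=\e^{-(t-s)K/2}\e^{-(t-s)K/2}$ and apply the one-sided bounds of Proposition~\ref{prop21} to each factor, the only cosmetic difference being that the paper first majorizes $\Lambda^{\a}_{x}$ by $\Lambda^{\a}$ before splitting, whereas you apply \eqref{stxdecay} to $\Lambda^{\a}_{x}$ directly. The exponent bookkeeping $(-3\a/2)+(-(1-\b)/2)=-1/2+\b/2-3\a/2$ and the elementary product inequality you record are exactly what is needed.
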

\begin{proof}
We only prove the first statement, the second is similar. By~\eqref{dx},~\eqref{dv} and also using Remark~\ref{divide} we have
 \begin{eqnarray*}
 \| \Lambda_{x}^{\a}\e^{-(t-s)K} \Lambda^{1-\b}_{v} \|_{B \to B }& \leq &
  \| \Lambda^{\a}\e^{-(t-s)K} \Lambda^{1-\b}_{v} \|_{B \to B }   \nonumber    \\
& \leq &  \| \Lambda^{\a}\e^{-(t-s)K/2}   \|_{B } \| \e^{-(t-s)K/2} \Lambda^{1-\b}_{v} \|_{B }\nonumber \\
% &\leq&   C\big((t-s)^{-1/2+\b/2-3\a/2}+1\big)  \nonumber   \\
 &\leq&  C\big((t-s)^{-1/2+\b/2-3\a/2}+1\big) ,
\end{eqnarray*}
which was the claim.
\end{proof}

We define
$$
B^{\perp} = \Big\{ f \in B \;\;s.t.\;\;\seq{f,\mminf}=\int fdxdv=0\Big\}
$$
the orthogonal of $\mminf$ in $B$. At this stage we observe that for  $f\in B^{\a} \cap B^{\perp}$ \Bk
\begin{equation}\label{p1}
\Lambda^{\a}_{x}f\in B^{\perp}, \qquad \Lambda^{\a}_{v}f \in B^{\perp}
\end{equation}
and that for all $f\in B^{1}$
\begin{equation}\label{p2}
 \partial_{v} f\in B^{\perp}. \Bk
\end{equation}
For~\eqref{p1} we use that the operator $\Lambda_{x}$ is  self-adjoint: $\<\Lambda_{x}^{\a}f,\mm\>=\<f,\Lambda_{x}^{\a} \mm\>=0$ since  ${\Lambda_{x} \mm=\mm}$. \Bk The same proof holds for    $\Lambda_{v}$. The justification of~\eqref{p2} is similar using that $\partial^{*}_{v}=-(v+\partial_{v})$ and\;$(v+\partial_{v})\mm=0$. \medskip

A careful analysis shows that we have in fact the following better results when we restrict to~$B^{\perp}$.
\begin{prop} \ph  \label{decay}
For all $\a, \b \in [0,1]$  there exist $C_{\a},C_{\b}>0$ so that  for all $t > 0$
  \begin{equation}\label{dx}
\|\L^{\a}_{x} \e^{-tK}\|_{B^{\perp}\to B^{\perp}}\leq C_{\a}(1+t^{-3\a/2})e^{- \kappa t}
,\quad  \| \e^{-tK}\L^{\a}_{x}\|_{B^{\perp}\to B^{\perp}}\leq C_{\a}(1+t^{-3\a/2})e^{- \kappa t}
\end{equation}
and
\begin{equation}\label{dv}
\|\L^{\b}_{v} \e^{-tK}\|_{B^{\perp}\to B^{\perp}}\leq C_{\b}(1+t^{-\b/2})e^{- \kappa t},\quad  \| \e^{-tK}\L^{\b}_{v}\|_{B^{\perp}\to B^{\perp}}\leq C_{\b}(1+t^{-\b/2})e^{- \kappa t}.
\end{equation}
In the previous bounds, the constants  $C_{\a}$  and $C_{\b}$ only  depend
 on a finite number of derivatives of~$V$.
\end{prop}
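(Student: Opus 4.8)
The plan is to bootstrap the regularizing bounds of Proposition~\ref{prop21}, which hold on all of $B$ but carry no time decay, into bounds on $B^\perp$ that gain the factor $e^{-\kappa t}$, by factoring the semigroup into a fixed regularizing piece and a purely decaying piece. The one quantitative input I would borrow, besides Proposition~\ref{prop21}, is the exponential return to equilibrium $\|e^{-tK}\|_{B^\perp\to B^\perp}\le C e^{-\kappa t}$ with $\kappa=\kappa_0/C_0$; this is exactly the hypocoercivity estimate of~\cite[Theorem~0.1]{HN04} (and is the reason $\kappa$ was defined as it was above). I would also record at the outset that $B^\perp$ is invariant under $e^{-tK}$: since a one-line computation using $\D_x\mm=-\D_xV\,\mm$ and $\D_v\mm=-v\mm$ gives $K^*\mm=0$, one has $\langle e^{-tK}f,\mm\rangle=\langle f,e^{-tK^*}\mm\rangle=\langle f,\mm\rangle$, so $e^{-tK}(B^\perp)\subset B^\perp$.

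I would then prove the first inequality in~\eqref{dx} and indicate that the other three are identical. Split into two time regimes. For $0<t\le1$, Proposition~\ref{prop21} already gives $\|\L_x^\a e^{-tK}\|_{B\to B}\le C(1+t^{-3\a/2})$, and on $[0,1]$ the factor $e^{-\kappa t}$ lies between $e^{-\kappa}$ and $1$, so it may be inserted for free; moreover $\L_x^\a e^{-tK}$ does send $B^\perp$ into $B^\perp$, because $e^{-tK}$ regularizes $B$ into $B^{1,1}\subset B^{\a,\a}$ and $\L_x^\a$ preserves $B^\perp$ by~\eqref{p1}. For $t\ge1$, use the semigroup law to write $e^{-tK}=e^{-K}\,e^{-(t-1)K}$; the operator $\L_x^\a e^{-K}$ is bounded on $B$ by Proposition~\ref{prop21} at $t=1$ and maps $B^\perp$ to $B^\perp$ by the same reasoning, while $\|e^{-(t-1)K}\|_{B^\perp\to B^\perp}\le Ce^{-\kappa(t-1)}\le C'e^{-\kappa t}$; composing yields $\|\L_x^\a e^{-tK}\|_{B^\perp\to B^\perp}\le C_\a e^{-\kappa t}$, which is $\le C_\a(1+t^{-3\a/2})e^{-\kappa t}$ since $t\ge1$. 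The ``converse'' bound $\|e^{-tK}\L_x^\a\|_{B^\perp\to B^\perp}$ is obtained symmetrically, writing $e^{-tK}\L_x^\a=e^{-(t-1)K}(e^{-K}\L_x^\a)$ for $t\ge1$ and using that $\L_x^\a$ maps $B^\a\cap B^\perp$ into $B^\perp$. The two estimates in~\eqref{dv} follow verbatim with $\L_x^\a$ replaced by $\L_v^\b$ and $3\a$ by $\b$, invoking the $\L_v$-version of Proposition~\ref{prop21}.

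The only point requiring care --- the ``main obstacle'' in this otherwise soft argument --- is the bookkeeping about domains and the non-commutation of $K$ with $\L_x$ and $\L_v$: one cannot slide these operators through the semigroup, so the estimates must always be arranged as a composition of a fixed-time regularizing factor with a pure-decay factor acting on $B^\perp$, and at each step one must verify that the operator in question really does map $B^\perp$ into $B^\perp$ (which is where~\eqref{p1} and the invariance of $B^\perp$ enter). All the hard analysis is imported: the short-time rates $t^{-3\a/2}$, $t^{-\b/2}$ from~\cite{her06} via Proposition~\ref{prop21}, and the rate $\kappa$ from~\cite{HN04}; no new interpolation is needed beyond what is already built into Proposition~\ref{prop21}.
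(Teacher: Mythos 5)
Your proof is correct, and it follows the paper's strategy in its overall architecture: split into $0<t\le 1$ and $t\ge 1$, handle short times by Proposition~\ref{prop21} plus the stability of $B^\perp$ (inserting $e^{-\kappa t}$ for free), and handle long times using the exponential return to equilibrium from~\cite{HN04}. The one place you diverge is the $t\ge 1$ regime: the paper quotes the stronger two-sided weighted estimate $\|\Lambda^s e^{-tK}\Lambda^s\|_{B^\perp\to B^\perp}\le C_s(t^s+t^{-s})e^{-\kappa t}$ of~\cite[Theorem~0.1]{HN04} and then halves $\kappa$ to absorb the polynomial growth $t^s$, whereas you use only the unweighted spectral-gap decay $\|e^{-tK}\|_{B^\perp\to B^\perp}\le Ce^{-\kappa t}$ and recover the regularization by the semigroup factorization $e^{-tK}=e^{-K}e^{-(t-1)K}$ together with Proposition~\ref{prop21} at time $1$. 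Your variant is slightly more economical (it imports less from~\cite{HN04} and does not need to degrade $\kappa$), and your explicit verification that $B^\perp$ is invariant via $K^*\mminf=0$ is exactly the ``direct computation'' the paper alludes to; both arguments are sound.
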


\begin{proof}

For $0 \leq t \leq 1$,  this is a \Bk direct consequence of the preceding proof and the fact that $B^{\perp}$ is stable by  $X_0$, $\Lambda_x^2$ and $\Lambda_v^2$ and therefore $\Lambda^2$, $K$ and $K^*$ by direct computations. %(see also~\cite{HN04}).  
For $t\geq 1$, the proposition
 is a consequence of the regularizing properties of $e^{-tK}$ stated   in~\cite[Theorem\;0.1]{HN04} \Bk and the spectral gap for $K$: it is proven there that for all $s \in \R $, there exist $N_s >0$ and $C_s >0$ such that
$$
\forall t>0, \qquad \norm{\Lambda^s e^{-tK} \Lambda^s}_{B^{\perp}\to B^{\perp}} \leq C_s (t^s+ t^{-s})e^{-\kappa t}.
$$
Using this and possibly replacing $\kappa$ by $\kappa/2$ gives the result for $t\geq 1$. This completes the proof.
\end{proof}

\begin{rema} \ph \label{divide} In fact possibly replacing once more $\kappa$ by $\kappa/2$, we also get directly that Proposition~\ref{decay} is also true with $K$ replaced by $K/2$. We shall use this just below.
\end{rema}

Similarly to Corollary~\ref{coro23} we have the following

\begin{coro} \ph \label{coro25}
Let $\a, \b \in [0,1]$. Then
\begin{equation}\label{prod1}
 \| \Lambda_{x}^{\a}\e^{-(t-s)K} \Lambda^{1-\b}_{v} \|_{B^{\perp}\to B^{\perp}} \leq  C\big((t-s)^{-1/2+\b/2-3\a/2}+1\big)e^{-\kappa (t-s)},
\end{equation}
and
\begin{equation}\label{prod2}
 \| \Lambda^{\b}_{v}\e^{-(t-s)K} \Lambda^{1-\b}_{v} \|_{B^{\perp}\to B^{\perp}} \leq  C\big((t-s)^{-1/2}+1\big)e^{-\kappa (t-s)}.
\end{equation}
\end{coro}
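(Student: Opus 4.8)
The plan is to follow the proof of Corollary~\ref{coro23} almost verbatim; the only new feature is that on $B^\perp$ the semi-group estimates carry the extra exponential factor, by Proposition~\ref{decay}. Recall, as already noted in the proof of Proposition~\ref{decay}, that $B^\perp$ is stable under $X_0$, $\Lambda_x$, $\Lambda_v$, hence under $\Lambda$, $K$, $K^*$ and $e^{-tK}$, so that every operator occurring below maps $B^\perp$ into itself and all the norms make sense there. Recall also, from Remark~\ref{divide}, that Proposition~\ref{decay} holds verbatim with $K$ replaced by $K/2$ (at the only cost of replacing $\kappa$ by $\kappa/2$).

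Write $\tau=t-s>0$. For~\eqref{prod1} I would split $e^{-\tau K}=e^{-\tau K/2}\,e^{-\tau K/2}$ and estimate, for $u\in B^\perp$,
\[
\begin{aligned}
\big\|\Lambda_x^\a e^{-\tau K}\Lambda_v^{1-\b}u\big\|_B
&\le \big\|\Lambda_x^\a e^{-\tau K/2}\big\|_{B^\perp\to B^\perp}\;\big\|e^{-\tau K/2}\Lambda_v^{1-\b}\big\|_{B^\perp\to B^\perp}\;\|u\|_B\\
&\le C\big(1+\tau^{-3\a/2}\big)e^{-\kappa\tau/2}\;C\big(1+\tau^{-(1-\b)/2}\big)e^{-\kappa\tau/2}\;\|u\|_B,
\end{aligned}
\]
where the first factor is the first estimate in~\eqref{dx} and the second the converse estimate in~\eqref{dv} with exponent $1-\b\in[0,1]$, both applied to $K/2$ via Remark~\ref{divide}. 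Expanding the product of the two polynomial prefactors, the most singular term as $\tau\to0^+$ has exponent $-1/2+\b/2-3\a/2\le 0$, hence harmless as $\tau\to+\infty$, so the right-hand side is bounded by $C\big(\tau^{-1/2+\b/2-3\a/2}+1\big)e^{-\kappa\tau}\|u\|_B$, which is~\eqref{prod1}. Estimate~\eqref{prod2} is obtained exactly the same way, with $\Lambda_x^\a$ replaced by $\Lambda_v^\b$ and the first estimate in~\eqref{dv} used in place of the one in~\eqref{dx}; this time the product of the polynomial prefactors has $\tau^{-1/2}$ as most singular term.

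There is no real obstacle here: the statement is a formal corollary of Proposition~\ref{decay}. The two points worth a word of care are the bookkeeping of the splitting $e^{-\tau K}=e^{-\tau K/2}e^{-\tau K/2}$ together with Remark~\ref{divide}, so that the two factors $e^{-\kappa\tau/2}$ recombine into the claimed $e^{-\kappa\tau}$, and checking that $B^\perp$ is genuinely preserved by $\Lambda_x^\a$, $\Lambda_v^{1-\b}$ and $e^{-\tau K}$, so that each operator norm above is really a $B^\perp\to B^\perp$ norm.
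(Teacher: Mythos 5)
Your argument is correct and is essentially the proof the paper intends: Corollary~\ref{coro25} is stated with the remark ``similarly to Corollary~\ref{coro23}'', and the intended argument is precisely the splitting $e^{-\tau K}=e^{-\tau K/2}e^{-\tau K/2}$ from the proof of that corollary, now with the $B^{\perp}$ bounds of Proposition~\ref{decay} applied to $K/2$ via Remark~\ref{divide} so that the two factors $e^{-\kappa\tau/2}$ recombine into $e^{-\kappa\tau}$. Your bookkeeping of the singular exponents and of the stability of $B^{\perp}$ matches the paper's conventions, so there is nothing to add.
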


\begin{prop} \label{gammaplusmoins}
   There exists $C>0$  so that  for  all $\gamma \in [0,2]$  and all $t \geq 0$
\begin{equation}\label{conj}
\|\L^{\gamma} \e^{-tK} \L^{-\gamma} \|_{B^{}\to B^{}}\leq C ,
\end{equation}
and
\begin{equation}\label{conj2}
\|\L^{\gamma} \e^{-tK} \L^{-\gamma} \|_{B^{\perp}\to B^{\perp}}\leq C e^{-\kappa t}.
\end{equation}
In the previous bounds, the constant   only  depends  on a finite number of derivatives of $V$.
\end{prop}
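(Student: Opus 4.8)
My plan is to reduce the statement, via complex interpolation and the orthogonal splitting $B=\C\,\mm\oplus B^{\perp}$, to a single short-time energy estimate for the order-two conjugation $\Lambda^{2}e^{-tK}\Lambda^{-2}$. To set up the reduction, first note that $\Lambda_{x}\mm=\Lambda_{v}\mm=\mm$ gives $\Lambda^{2}\mm=\mm$, hence $\Lambda^{\pm\gamma}\mm=\mm$, while $K\mm=K^{*}\mm=0$ gives $e^{-tK}\mm=\mm$ and the invariance of $B^{\perp}$ under $e^{-tK}$, $e^{-tK^{*}}$ and $\Lambda^{\pm\gamma}$. Thus $\Lambda^{\gamma}e^{-tK}\Lambda^{-\gamma}$ is block diagonal for this splitting: the identity on $\C\mm$, and bounded on $B$ if and only if bounded on $B^{\perp}$, with comparable norms. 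For $t\geq 1$ the $B^{\perp}$-block is handled at once by the regularizing estimate of~\cite{HN04} quoted in the proof of Proposition~\ref{decay}: writing $\Lambda^{\gamma}e^{-tK}\Lambda^{-\gamma}=(\Lambda^{\gamma}e^{-tK}\Lambda^{\gamma})\,\Lambda^{-2\gamma}$ and using $\|\Lambda^{\gamma}e^{-tK}\Lambda^{\gamma}\|_{B^{\perp}\to B^{\perp}}\leq C_{\gamma}(t^{\gamma}+t^{-\gamma})e^{-\kappa t}$ together with $\|\Lambda^{-2\gamma}\|_{B^{\perp}\to B^{\perp}}\leq 1$, the polynomial factor is absorbed into the exponential (replacing $\kappa$ by $\kappa/2$, as done repeatedly in this section); this gives~\eqref{conj2} for $t\geq 1$, and then~\eqref{conj} for $t\geq 1$ by the block structure. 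So everything reduces to $0<t\leq 1$, and there it suffices to prove~\eqref{conj}, since $Ce^{-\kappa t}\geq Ce^{-\kappa}$ and~\eqref{conj2} then follows by restriction to $B^{\perp}$.

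For $0<t\leq 1$ I would interpolate. The case $\gamma=0$ is~\eqref{maxi}, so it is enough to establish the bound for $\gamma=2$ and then invoke Hadamard's three-lines theorem for the analytic family $z\mapsto\Lambda^{z}e^{-tK}\Lambda^{-z}$ on the strip $0\leq\Re z\leq 2$. Here one uses that $\Lambda$ is self-adjoint and $\geq 1$ on $B$, so that $\Lambda^{is}$ is unitary and $\|\Lambda^{\sigma+is}e^{-tK}\Lambda^{-\sigma-is}\|_{B\to B}=\|\Lambda^{is}(\Lambda^{\sigma}e^{-tK}\Lambda^{-\sigma})\Lambda^{-is}\|_{B\to B}$ depends only on $\sigma=\Re z$; the family is therefore uniformly bounded on each vertical edge of the strip (by $1$ on $\Re z=0$ and by the $\gamma=2$ constant on $\Re z=2$), and three-lines applied to $z\mapsto\langle\Lambda^{z}e^{-tK}\Lambda^{-z}f,g\rangle$ (legitimate since $e^{-tK}$ is regularizing for $t>0$) yields $\|\Lambda^{\gamma}e^{-tK}\Lambda^{-\gamma}\|_{B\to B}\leq C^{\gamma/2}$ for all $\gamma\in[0,2]$.

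It remains to prove $\|\Lambda^{2}e^{-tK}\Lambda^{-2}\|_{B\to B}\leq C$ for $0<t\leq 1$, which is the heart of the matter. Take $g$ Schwartz and $u(t)=e^{-tK}g$ (smooth and rapidly decreasing for $t>0$, by hypoellipticity and the regularizing properties of $e^{-tK}$); it suffices to prove $\|\Lambda^{2}u(t)\|_{B}\leq C\|\Lambda^{2}g\|_{B}$ on $(0,1]$ with $C$ depending only on finitely many derivatives of $V$, and conclude by density. Write $K=X_{0}+Y$ with $X_{0}=v.\partial_{x}-\partial_{x}V.\partial_{v}$ skew-adjoint on $B$ and $Y=-\partial_{v}.(\partial_{v}+v)=\Lambda_{v}^{2}-1\geq 0$; since $\Lambda^{2}$ commutes with $\Lambda_{v}^{2}$ we have $[\Lambda^{2},K]=[\Lambda^{2},X_{0}]$, a second-order differential operator all of whose coefficients are bounded in terms of $\partial^{2}V$ and $\partial^{3}V$ except for the single term $-\partial_{x}V.\partial_{v}$. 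Differentiating, $\tfrac{d}{dt}\|\Lambda^{2}u\|_{B}^{2}=-2\Re\langle\Lambda^{2}u,K\Lambda^{2}u\rangle-2\Re\langle\Lambda^{2}u,[\Lambda^{2},K]u\rangle$, the first term being $\leq 0$ by accretivity of $K$. The dangerous commutator term is dealt with via the identity $X_{0}=K-Y$, namely $\partial_{x}V.\partial_{v}u=v.\partial_{x}u-X_{0}u=v.\partial_{x}u+\partial_{t}u+Yu$, which converts $-2\Re\langle\Lambda^{2}u,-\partial_{x}V.\partial_{v}u\rangle$ into $2\Re\langle\Lambda^{2}u,v.\partial_{x}u\rangle+2\Re\langle\Lambda^{2}u,Yu\rangle+\tfrac{d}{dt}\|\Lambda u\|_{B}^{2}$. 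With the relative bounds $\|v.\partial_{x}u\|_{B}+\|Yu\|_{B}\lesssim\|\Lambda^{2}u\|_{B}$, and an analogous bound for the bounded-coefficient part of $[\Lambda^{2},X_{0}]$ — all of these resting on the standard Witten inequality $\|\partial_{x}u\|_{B}+\||\partial_{x}V|u\|_{B}\lesssim\|\Lambda_{x}u\|_{B}$, valid under Assumption~\ref{assumption1} — one arrives at
\[
\frac{d}{dt}\Big(\|\Lambda^{2}u\|_{B}^{2}-\|\Lambda u\|_{B}^{2}\Big)\leq C\,\|\Lambda^{2}u\|_{B}^{2}.
\]
A first, easier, Gr\"onwall estimate for $\|\Lambda u\|_{B}^{2}$ — where the only growing coefficient is again absorbed through $\||\partial_{x}V|u\|_{B}\lesssim\|\Lambda_{x}u\|_{B}$ — gives $\|\Lambda u(t)\|_{B}^{2}\leq e^{Ct}\|\Lambda^{2}g\|_{B}^{2}$; feeding this back and applying Gr\"onwall once more on $(0,1]$ produces $\|\Lambda^{2}u(t)\|_{B}^{2}\leq C\|\Lambda^{2}g\|_{B}^{2}$, as desired. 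The subtle point is precisely the isolation of the unique unbounded-coefficient term of $[\Lambda^{2},K]$ and its absorption via $X_{0}=K-Y$ and the Witten inequalities; everything else is bookkeeping with the linear estimates of this section and with~\cite{HN04}.
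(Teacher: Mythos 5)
Your proposal is correct and follows the same architecture as the paper's proof: a commutator/Gr\"onwall energy estimate for $\gamma=2$ on $[0,1]$, the regularizing estimates of~\cite{HN04} for $t\geq 1$, and interpolation down to $\gamma\in[0,2]$ (the paper just says ``by interpolation''; you spell out a three-lines argument using that $\Lambda^{is}$ is unitary — both are fine). The one place where you genuinely diverge is the treatment of the commutator. The paper simply checks that $[X_0,\Lambda^2]\Lambda^{-2}$ is a bounded operator on $B$, the term $\partial_x V.\partial_v\,\Lambda^{-2}$ included, because $|\partial_x V|\Lambda_x^{-1}$ and $\partial_v\Lambda_v^{-1}$ (and likewise $v\Lambda_v^{-1}$, $\partial_x\Lambda_x^{-1}$) are bounded — exactly the Witten and harmonic-oscillator inequalities you invoke for the other terms. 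Your detour through $X_0=K-Y$ and the two-tier Gr\"onwall for $\|\Lambda^2u\|_B^2-\|\Lambda u\|_B^2$ (which moreover requires a separate $B^1$ propagation bound as input) is therefore valid but unnecessary; note also that the pieces of $[X_0,\Lambda^2]$ with coefficient $v$ are just as ``unbounded'' as the one with coefficient $\partial_x V$ and are absorbed in the same way, so nothing singles out that term. What the paper's direct route buys is a one-line differential inequality $\partial_t\|\Lambda^2 u\|_B^2\leq C\|\Lambda^2u\|_B^2$; what yours buys is nothing extra here, though the idea of trading $\partial_x V.\partial_v$ for $\partial_t+Y+v.\partial_x$ via the equation is the kind of trick that becomes necessary when $V''$ is not bounded.
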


\begin{proof} We only give the proof of~\eqref{conj}, since~\eqref{conj2} can be obtained with the same argument. Recall the definition~\eqref{defB} of the space $B^{\alpha,\beta}_{x,v}$. We first note that it is equivalent
to show that\;$e^{-tK}$ is bounded from $B^{\gamma,\gamma}_{x,v}$ into itself. We first begin with the case $\gamma=2$.  We therefore look, for an initial data $f_0 \in B^{2,2}_{x,v}$ at the equation satisfied by $g= \Lambda^2 f$ in $B$. Let us define the operator
$$X_0 = v.\D_{x} -\D_x V.\D_{v}.$$
 Since
\begin{equation*}
\D_t f + X_0 f -\D_v. (\D_v + v) f = 0, \ \ \ \ \  f_{t= 0} = f_0
\end{equation*}
and from the regularising properties of $e^{-tK}$, we get
\begin{equation*}
\D_t g + X_0 g -\D_v. (\D_v + v) g = [X_0, \Lambda^2]\Lambda^{-2} g, \ \ \ \ \  g_{t= 0} = g_0
\end{equation*}
where we also used that $-\D_v. (\D_v + v)$ and $\Lambda^2$ commute.
Integrating against $g$ in $B$ gives
\begin{equation*}
\D_t \norm{g}^2 \leq \sep{ [X_0, \Lambda^2]\Lambda^{-2}g,g},
\end{equation*}
since $X_0$ is skew adjoint and $-\D_v. (\D_v + v)$ is non-negative. Let us study the right-hand side commutator.
We have
\begin{equation*}
\begin{split}
[X_0, \Lambda_v^2]\Lambda^{-2}
& = [v. \D_x - \D_x V(x) . \D_v, \Lambda_v^2]\Lambda^{-2} \\
& = \sep{ [v,\Lambda_v^2] \D_x - [ \D_v, \Lambda_v^2] \D_x V(x)} \Lambda^{-2}.
\end{split}
\end{equation*}
This gives with a direct computation
$$
\norm{[X_0, \Lambda_v^2]\Lambda^{-2}g}_{B} \leq C \norm{g}_{B}.
$$
We can do exactly the same with $\Lambda_x^2$ (using that $V^{(3)}$ is bounded) and we get on the whole that
$$
\norm{[X_0, \Lambda^2]\Lambda^{-2}g}_{B} \leq 2C \norm{g}_{B}
$$
so that with a new constant $C>0$
\begin{equation*}
\D_t \norm{g}_{B}^2 \leq 2C \norm{g}_{B}^2.
\end{equation*}
We therefore get
$$
\norm{g(t)}_{B} \leq e^{Ct} \norm{g_0}_{B}
$$
which we will use for $t\in [0,1]$.
Using the regularising property of $e^{-tK}$  (\cite[Theorem 0.1]{HN04}), \Bk we also know that for all $t\geq 1$,
$$
\norm{g(t)}_{B} \leq \norm{f(t)}_{B^{2}} \leq C' \norm{f_0}_{B^2} \leq C' \norm{g_0}_{B}.
$$
 Putting \Bk these results together give for all $t\geq 0$,
$$
\norm{g(t)}_{B} \leq C \norm{g_0}_{B}
$$
and  therefore \Bk   $e^{-tK}$ is (uniformly in $t>0$) bounded from $B^2$ to $B^2$.
Now the result is also clear for $\gamma = 0$ by the semi-group property, and by interpolation we get that $e^{-tK}$ is (uniformly in $t>0$) bounded from $B^\gamma$ to $B^\gamma$.
As a conclusion we get
\begin{equation*}
\|\L^{\gamma} \e^{-tK} \L^{-\gamma} \|_{B^{}\to B^{}}\leq C_{\gamma},
\end{equation*}
which was the claim.
\end{proof}

We are now able to state the following interpolation results
 \begin{lemm} \ph\label{leminterpol}
Let $\b \in[0,1]$. Then there exists $C>0$ so that for all $a\in [0,\b]$
\begin{equation*}
\|\L^{\b}_{v} \e^{-tK}  \|_{B^{a}\to B^{ }}\leq C (1+{t^{-(\b-a)/2}}).
\end{equation*}
\end{lemm}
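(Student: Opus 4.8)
The plan is to obtain the bound, for each fixed $t>0$, by complex interpolation between the two extreme values $a=0$ and $a=\b$ of the regularity parameter. Since the paper records $\|f\|_{B^{a}}\sim\|\L^{a}f\|_{B}$ and $\L$ is a positive self-adjoint operator on the Hilbert space $B$ with $\L\geq 1$, I would first reduce the statement to the operator estimate
\[
\|\L^{\b}_{v}\,\e^{-tK}\,\L^{-a}\|_{B\to B}\leq C\bigl(1+t^{-(\b-a)/2}\bigr),\qquad 0\leq a\leq\b,
\]
the case $\b=0$ being immediate from~\eqref{maxi}.

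For the endpoint $a=0$ this is exactly the first inequality of~\eqref{stvdecay} in Proposition~\ref{prop21}. For the endpoint $a=\b$ I would use that $\L_{v}$ and $\L$ commute — the operator $\L^{2}_{v}$ acts only on the $v$ variable and $\L^{2}_{x}$ only on the $x$ variable, so $\L^{2}$ and $\L^{2}_{v}$ have a joint spectral resolution — and factor
\[
\L^{\b}_{v}\,\e^{-tK}\,\L^{-\b}=\bigl(\L^{\b}_{v}\L^{-\b}\bigr)\,\bigl(\L^{\b}\,\e^{-tK}\,\L^{-\b}\bigr).
\]
On the joint spectrum one has $\lambda^{2}=\lambda^{2}_{x}+\lambda^{2}_{v}-1\geq\lambda^{2}_{v}$, so the first factor has operator norm $\leq 1$ on $B$, while the second factor is bounded on $B$ uniformly in $t$ by~\eqref{conj} of Proposition~\ref{gammaplusmoins} (with exponent $\b\leq 2$). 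Hence $\|\L^{\b}_{v}\e^{-tK}\L^{-\b}\|_{B\to B}\leq C$, uniformly in $t>0$.

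I would then interpolate via the three-lines lemma. Fix $t>0$ and set $F(z)=\L^{\b}_{v}\,\e^{-tK}\,\L^{-\b z}$ on the strip $0\leq\Re z\leq 1$; as $\L^{\b}_{v}\e^{-tK}$ is a fixed bounded operator and $\|\L^{-\b z}\|_{B\to B}\leq 1$ for $\Re z\geq 0$, each scalar function $z\mapsto\langle F(z)u,v\rangle$ is bounded on the strip and analytic in its interior. Since $\L\geq 1$, the imaginary powers $\L^{is}=\e^{is\log\L}$ are unitary on $B$, so on $\Re z=0$ one gets $\|F(z)\|_{B\to B}\leq C(1+t^{-\b/2})$ from the first endpoint and on $\Re z=1$ one gets $\|F(z)\|_{B\to B}\leq C$ from the second. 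The three-lines lemma then yields, at $\theta=a/\b$,
\[
\|\L^{\b}_{v}\,\e^{-tK}\,\L^{-a}\|_{B\to B}\leq C\,(1+t^{-\b/2})^{1-a/\b}\leq C\bigl(1+t^{-(\b-a)/2}\bigr),
\]
where the last step uses $(1+x)^{p}\leq 1+x^{p}$ for $p=1-a/\b\in[0,1]$ with $x=t^{-\b/2}$; translating back through $\|\L^{a}f\|_{B}\sim\|f\|_{B^{a}}$ gives the Lemma.

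The only step that is more than bookkeeping is the endpoint $a=\b$: it is precisely there that the commutation $[\L_{v},\L]=0$ (used to bound $\L^{\b}_{v}\L^{-\b}$ by means of $\L^{2}_{v}\leq\L^{2}$) must be combined with the uniform conjugation estimate~\eqref{conj}. The $a=0$ endpoint, the three-lines argument, and the final elementary inequality are all routine.
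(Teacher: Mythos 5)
Your proof is correct and follows essentially the same route as the paper: the same two endpoints ($a=0$ from Proposition~\ref{prop21}, $a=\b$ from $\L_v^{2}\leq\L^{2}$ combined with~\eqref{conj}), followed by interpolation in $a$. You merely make explicit what the paper leaves implicit, namely the commutation of $\L_v$ with $\L$ behind the inequality $\|\L_v^{\b}g\|_B\leq\|\L^{\b}g\|_B$ and the three-lines argument behind the word ``interpolation''.
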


\begin{proof}
For $a=0$, this  follows from Proposition~\ref{prop21}. Next, set $a=\b$, then for $f\in B^{\beta}$
\begin{equation*}
\|\L^{\b}_{v} \e^{-tK} f \|_{ B^{ }}\leq \|\L^{\b} \e^{-tK} f \|_{ B^{ }}\leq \|\L^{\b} \e^{-tK}\L^{-\b}  \|_{ B^{ }\to B}  \|\L^{\b}f  \|_{ B^{ }} \leq C \|f\|_{B^{\b}},
\end{equation*}
by~\eqref{conj}. The general case $a\in [0,\beta]$ is obtained by interpolation.
\end{proof}

 \begin{lemm} \ph\label{leminterpol2}
Let $0\leq a_{0} \leq 2$. Then for all $a_{0}\leq a\leq a_{0}+2$  there exists $C>0$ so that for all $0\leq t\leq1$
\begin{equation*}
\|\L^{a_{0}} \big( \e^{-tK}-1)  \|_{B^{a}\to B^{ }}\leq C {t^{(a-a_{0})/2}}.
\end{equation*}
\end{lemm}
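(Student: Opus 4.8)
The statement to prove is Lemma~\ref{leminterpol2}: for $0\le a_0\le 2$ and $a_0\le a\le a_0+2$, one has $\|\Lambda^{a_0}(\e^{-tK}-1)\|_{B^a\to B}\le C\,t^{(a-a_0)/2}$ for $0\le t\le 1$. The natural strategy is to prove the two endpoint cases $a=a_0$ and $a=a_0+2$ and then interpolate in the index $a$ (keeping $a_0$ fixed), since the family $B^a$ for $a$ in a bounded interval forms a complex interpolation scale. For the endpoint $a=a_0$ the claimed bound is simply $\|\Lambda^{a_0}(\e^{-tK}-1)\|_{B^{a_0}\to B}\le C$, which must hold uniformly for $t\in[0,1]$; this follows from $\|\Lambda^{a_0}\e^{-tK}\Lambda^{-a_0}\|_{B\to B}\le C$ (Proposition~\ref{gammaplusmoins}, applied with $\gamma=a_0\le 2$) together with the triangle inequality against the identity, using that $\Lambda^{a_0}$ is an isometry from $B^{a_0}$ onto $B$ by definition of these spaces. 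So the real content is the endpoint $a=a_0+2$.

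For $a=a_0+2$ I would use the fundamental theorem of calculus in $t$: since $\e^{-tK}$ is strongly continuous and regularizing, for $f\in B^{a_0+2}$ one writes
\begin{equation*}
\Lambda^{a_0}(\e^{-tK}-1)f = -\int_0^t \Lambda^{a_0} K \e^{-sK} f\, ds = -\int_0^t \Lambda^{a_0} \e^{-sK} K f\, ds,
\end{equation*}
using that $K$ commutes with the semigroup on its domain. The point is then to bound $\|\Lambda^{a_0}\e^{-sK}\|_{B\to B}$ times $\|Kf\|_B$; but $K=X_0-\partial_v\cdot(\partial_v+v)$, so $K$ maps $B^{a_0+2}$ into $B$ boundedly — the second-order velocity part is controlled by $\Lambda_v^2\le\Lambda^2$, and the transport term $X_0$ raises the $x$-degree by one but costs a velocity weight, both absorbed by two derivatives in the $B^{a_0+2}$ norm (this is where one uses $V''\in W^{\infty,\infty}$, $V^{(3)}$ bounded, as in the proof of Proposition~\ref{gammaplusmoins}). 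Combined with the uniform bound $\|\Lambda^{a_0}\e^{-sK}\Lambda^{-a_0}\|_{B\to B}\le C$, this would give $\|\Lambda^{a_0}(\e^{-tK}-1)f\|_B\le C\,t\,\|f\|_{B^{a_0}}$ — which is too weak. To get the sharp $t^{1}$ with the correct norm $B^{a_0+2}$ rather than degrading to $B^{a_0}$, I would instead split $Kf$ and use the regularizing estimates of Proposition~\ref{prop21} in the integrand only on the part that is genuinely order $\le 2$, keeping the weight with $f$; concretely, write $\Lambda^{a_0}\e^{-sK}Kf = \Lambda^{a_0}\e^{-sK}\Lambda^{2}\,(\Lambda^{-2}Kf)$ and note $\Lambda^{-2}Kf\in B^{a_0+2}\Rightarrow$ lands in $B^{a_0}$, then bound $\|\Lambda^{a_0}\e^{-sK}\Lambda^{2}\|_{B^{?}\to B}$ using Proposition~\ref{prop21} (which gives at worst $s^{-3/2}$ from the $x$-part — not integrable). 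So the cleaner route for $a=a_0+2$ is: bound $\|\Lambda^{a_0}(\e^{-tK}-1)\|_{B^{a_0+2}\to B}$ directly by $\|\Lambda^{a_0}\e^{-tK}-\Lambda^{a_0}\|_{B^{a_0+2}\to B}$ and use that on $B^{a_0+2}$ the operator $\e^{-tK}-1$ gains, via the differential equation for $g=\Lambda^2 f$ exactly as in Proposition~\ref{gammaplusmoins}, a factor $t$ in $B$, i.e. $\|\e^{-tK}f - f\|_{B^{a_0}} = \|\int_0^t \e^{-sK}Kf\,ds\|_{B^{a_0}}\le \int_0^t \|\e^{-sK}Kf\|_{B^{a_0}}ds\le C t\|Kf\|_{B^{a_0}}\le Ct\|f\|_{B^{a_0+2}}$, using the uniform boundedness of $\e^{-sK}$ on $B^{a_0}$ from Proposition~\ref{gammaplusmoins} and $\|K\|_{B^{a_0+2}\to B^{a_0}}<\infty$.

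Having the two endpoints — $\|\Lambda^{a_0}(\e^{-tK}-1)\|_{B^{a_0}\to B}\le C\cdot t^{0}$ and $\|\Lambda^{a_0}(\e^{-tK}-1)\|_{B^{a_0+2}\to B}\le C\cdot t^{1}$ — complex interpolation of the pair $(B^{a_0},B^{a_0+2})$ at parameter $\theta=(a-a_0)/2\in[0,1]$, together with $[B^{a_0},B^{a_0+2}]_\theta=B^{a_0+2\theta}=B^{a}$ (which holds for these weighted spaces since $\Lambda$ is a fixed positive self-adjoint operator and $B^s$ are its fractional domains), yields the bound $\|\Lambda^{a_0}(\e^{-tK}-1)\|_{B^{a}\to B}\le C\, t^{\theta}=C\,t^{(a-a_0)/2}$. \textbf{The main obstacle} is making the endpoint estimate at $a=a_0+2$ genuinely sharp and clean: one must be careful that $K$ maps $B^{a_0+2}$ into $B^{a_0}$ boundedly (the transport term $X_0$ is the delicate piece — it is anisotropic, raising $x$-order while lowering $v$-order, so one needs the interplay $\Lambda_x\lesssim\Lambda$, $\Lambda_v\lesssim\Lambda$ and the commutator estimates with $V',V'',V^{(3)}$ bounded, mirroring the computation in the proof of Proposition~\ref{gammaplusmoins}), and that the fundamental-theorem-of-calculus identity is justified on the domain via the regularizing property of $\e^{-sK}$ rather than only formally.
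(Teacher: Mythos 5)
Your proof is correct and follows essentially the same route as the paper: the endpoint $a=a_0$ from Proposition~\ref{gammaplusmoins}, the endpoint $a=a_0+2$ via $(1-\e^{-tK})f=\int_0^t\e^{-sK}Kf\,ds$ together with the uniform bound $\|\Lambda^{a_0}\e^{-sK}\Lambda^{-a_0}\|_{B\to B}\le C$ and the boundedness of $K:B^{a_0+2}\to B^{a_0}$, and then interpolation in $a$. The only blemish is the mid-proof detour where you call the bound $Ct\|Kf\|_{B^{a_0}}\le Ct\|f\|_{B^{a_0+2}}$ ``too weak'' before recovering exactly that estimate as the correct endpoint; it is in fact precisely what is needed.
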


\begin{proof} For $a=a_{0}$, the result follows from~\eqref{conj}. Now we prove the bound for $a=a_{0}+2$, and the general result will follow by interpolation.
We write
\begin{equation*}
\big(1-\e^{-tK}\big)f=\int_{0}^{t}\e^{-s K}Kfds.
\end{equation*}
Then we use that $K:B^{2}\longrightarrow B$ is bounded, and by~\eqref{conj} we get for all $f\in B^{a_{0}+2}$
\begin{eqnarray*}
\|\L^{a_{0}} \big( \e^{-tK}-1)  \|_{B^{a}\to B^{ }}&\leq & \int_{0}^{t} \|\L^{a_{0}} \e^{-sK}\Lambda^{-a_{0}}  \| \|\Lambda^{a_{0}}Kf\|ds\\
&\leq &C t\|f\|_{B^{a_{0}+2}},
\end{eqnarray*}
hence the result.
\end{proof}

We conclude this section with a technical result.

\begin{lemm} \ph
For all $\delta \in \R$  there exists $C_{\delta}>0$ so that
\begin{equation}\label{deriv}
\|\L^{-\delta}_{v}  \Lambda_{v}^{-1}\D_{v} \L^{\delta}_{v}  \|_{B^{}\to B^{\perp}}\leq C_{\delta}.
\end{equation}
In the previous bound, the constant   only  depends  on a finite number of derivatives of $V$.
\end{lemm}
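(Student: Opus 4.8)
The plan is to exploit that, apart from $\partial_v$ itself, every factor in $\Lambda_v^{-\delta}\Lambda_v^{-1}\partial_v\Lambda_v^\delta$ is a function of the non-negative self-adjoint operator $\Lambda_v^2\ge 1$, and that $\partial_v$ commutes with $\Lambda_v^2$ in the simplest possible way. First I would record the commutation relation: a direct computation from $\Lambda_v^2=-\partial_v\cdot(\partial_v+v)+1$ gives $[\partial_{v_k},\Lambda_v^2]=-\partial_{v_k}$, i.e. $\partial_v\Lambda_v^2=(\Lambda_v^2-1)\partial_v$, whence by functional calculus $\partial_v\,g(\Lambda_v^2)=g(\Lambda_v^2-1)\,\partial_v$ for any Borel $g$, in particular $\partial_v\Lambda_v^\delta=(\Lambda_v^2-1)^{\delta/2}\partial_v$. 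Absorbing one power of $\Lambda_v$ into the function of $\Lambda_v^2$ then rewrites (on a dense domain, say finite Hermite-in-$v$ expansions with Schwartz coefficients in $x$, and then by continuity)
\begin{equation*}
\Lambda_v^{-\delta}\,\Lambda_v^{-1}\partial_v\,\Lambda_v^\delta \;=\; \Big(\tfrac{\Lambda_v^2-1}{\Lambda_v^2}\Big)^{\delta/2}\,\Lambda_v^{-1}\partial_v .
\end{equation*}
So the problem reduces to bounding a spectral multiplier of $\Lambda_v^2$ composed with the operator $\Lambda_v^{-1}\partial_v$.

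The operator $\Lambda_v^{-1}\partial_v$ is bounded on $B$ with norm $\le 1$: taking adjoints in the identity $\|(\partial_v+v)f\|_B^2=\|\Lambda_v f\|_B^2-\|f\|_B^2$ used in the proof of Proposition~\ref{prop21} gives $\|\partial_v^*\Lambda_v^{-1}\|_{B\to B}\le 1$, hence $\|\Lambda_v^{-1}\partial_v\|_{B\to B}\le 1$. It remains to control the multiplier $m_\delta(s)=((s-1)/s)^{\delta/2}$. For $\delta\ge 0$ one has $0\le m_\delta\le 1$ on $[1,\infty)$ and the estimate is immediate. The only genuine point is $\delta<0$, where $m_\delta$ is singular at $s=1$; here I would use that $\partial_v$ sends nothing into the lowest velocity level: if $\Lambda_v^2\phi=\phi$ then $\|\partial_v^*\phi\|_B^2=\|(\partial_v+v)\phi\|_B^2=\|\Lambda_v\phi\|_B^2-\|\phi\|_B^2=0$, so $\partial_v^*\phi=0$ and therefore $\langle\partial_v f,\phi\rangle_B=\langle f,\partial_v^*\phi\rangle_B=0$ for all $f$, i.e. $\mathrm{ran}\,\partial_v\perp\ker(\Lambda_v^2-1)$. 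Since $\Lambda_v^2$ is, up to conjugation by $\mm^{1/2}$, a shift of the harmonic oscillator in $v$, its spectrum is discrete with a gap above the bottom eigenvalue $1$; hence $\Lambda_v^{-1}\partial_v$ maps $B$ into the spectral subspace $\{\Lambda_v^2\ge 2\}$, on which $m_\delta\le 2^{|\delta|/2}$. Combining, $\|\Lambda_v^{-\delta}\Lambda_v^{-1}\partial_v\Lambda_v^\delta\|_{B\to B}\le C_\delta$.

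Finally, the target is $B^\perp$: since $\partial_v^*\mm=-(\partial_v+v)\mm=0$ we have $\partial_v f\in B^\perp$ for every $f$, which is precisely~\eqref{p2}, and every function of the self-adjoint operator $\Lambda_v$ fixes $\mm$ and hence preserves $B^\perp$ (cf.~\eqref{p1}); so the composite operator takes values in $B^\perp$. Since neither $\Lambda_v$ nor $\partial_v$ involves $V$ (in the $L^2$-picture obtained by conjugating with $\mm^{1/2}$ the operator acts only on the $v$ variable), the constant so obtained is in fact purely dimensional, which is compatible with — and stronger than — the stated dependence. I expect the only real obstacle to be the case $\delta<0$: for negative powers the multiplier has a singularity at the bottom of $\mathrm{spec}(\Lambda_v^2)$, and removing it needs the spectral-gap fact that $\mathrm{ran}\,\partial_v$ avoids that eigenvalue; the rest is routine functional calculus for $\Lambda_v^2$.
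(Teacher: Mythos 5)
Your argument is correct, and it takes a genuinely different route from the paper. The paper obtains the $B\to B$ bound in one line by invoking the pseudodifferential calculus of H\'erau--Nier (\cite[Proposition A.7]{HN04}): in the symbol classes introduced there $\partial_v$ has order $1$ in the velocity variable, so conjugating by $\Lambda_v^{\delta}$ and composing with $\Lambda_v^{-1}$ yields an operator of order $0$, hence bounded; the passage to $B^{\perp}$ then uses~\eqref{p1}--\eqref{p2} exactly as in your last paragraph. You replace this black box by the exact ladder relation $[\partial_{v_k},\Lambda_v^2]=-\partial_{v_k}$ (correct: $\Lambda_v^2=-\Delta_v-v\cdot\partial_v-d+1$ and only the drift term contributes), which exhibits $\partial_{v_k}$ as a creation operator shifting the spectrum $\{1,2,3,\dots\}$ of $\Lambda_v^2$ up by one, and reduces everything to a bounded spectral multiplier of $\Lambda_v^2$. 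This is more elementary and self-contained, it makes transparent that the constant is purely dimensional (after conjugation by $\mminf^{1/2}$ the whole operator acts in the $v$-fibre only), and your treatment of $\delta<0$ — using $\mathrm{ran}\,\partial_{v_k}\perp\ker(\Lambda_v^2-1)$ together with the gap between the eigenvalues $1$ and $2$ — is precisely the point where a naive multiplier estimate would break down, and you handle it correctly. What the paper's route buys is economy: the same citation underlies several other conjugation estimates in Section~\ref{sect2}.

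Two bookkeeping remarks, neither of which affects the conclusion. The bound $\|\Lambda_v^{-1}\partial_v\|_{B\to B}\le 1$ is correct \emph{componentwise}: for each $k$, $\partial_{v_k}\partial_{v_k}^{*}\le\sum_j\partial_{v_j}\partial_{v_j}^{*}=\Lambda_v^2-1$ gives $\|\partial_{v_k}^{*}\Lambda_v^{-1}\|\le1$ and hence $\|\Lambda_v^{-1}\partial_{v_k}\|\le1$ by duality. For the full gradient, however, the adjoint produced by the identity you quote is the divergence $\Lambda_v^{-1}\partial_v\cdot$, not the gradient, and the sharp constant for the gradient at the bottom eigenvalue is $\sqrt{d/2}$, which exceeds $1$ when $d=3$; since the lemma only asks for some $C_\delta$, this is immaterial. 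Finally, for $\delta>0$ the operator $\Lambda_v^{\delta}$ is unbounded, so, as you indicate, the identity is to be read on your core and the statement means that the densely defined operator extends boundedly.
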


 \begin{proof}
From  to~\cite[Proposition A.7]{HN04}  we directly get that operator 
$ \L^{-\delta}_{v}  \Lambda_{v}^{-1}\D_{v} \L^{\delta}_{v}$ is bounded from $B$ to $B$.  Indeed in the symbolic estimates and  pseudo-differential scales introduced there, the operator
$\D_v$ is of order $1$ with respect to the velocity variable. Now using the stability of $B^\perp$ by $\Lambda_v$ and (\ref{p2}) yield the result.
\end{proof} \Bk

\begin{rema}
We shall see in the next section (Section~\ref{sect36}) that most of the results of this section remain true when $V$ is perturbed by a less regular term $\widetilde{V}\in W^{2,\infty}$. We will need this  for the small time analysis of the equation~\eqref{vpfp}.
\end{rema}
%%%%%%%%%%%%%%%%%%%%%
%%%%%%%%%%%%%%%%%%%%%
%%%%%%%%%%%%%%%%%%%%%%%%%%%%%%%%%%%%%%%%%%%%%%%%%%
%%%%%%%%%%%%%%%%%%%%%%%%%%%%%%%%%%%%%%%%%%%%%%%%%%%%
%%%%%%%%%%%%%%%%%%%%%%%%%%%%%%%%%%%%%%%%%%%%%%%%%%%%%%

\section{Intermediate results}\label{sect3}

In this section, we gather some intermediate results about the Vlasov-(Poisson)-Fokker-Planck equation.
In the first subsection we state some a priori basic properties
 satisfied by  solutions of the Fokker-Planck equation and then equation~\eqref{vpfp}. In the second one we study more carefully the Poisson term, and in the last one we recall some facts about the equilibrium state.

\subsection{The linear Fokker-Planck equation} \label{basic}

In this section, we just recall from~\cite[Appendix A]{Deg86} some standard and basic results about the behaviour of the solutions of
the following linear Krammers-Fokker-Planck equation
\begin{equation}
  \label{lkfp}
  \left\{
  \begin{aligned}
   &  \D_{t}f+v.\D_{x}f -(\D_x V -v ).\D_{v}f - f - \eps_0 E(t,x) \D_v{ f\Bk} - \Delta_vf=F, \\
    &   f(0,x,v) = f_0(x,v).
    \end{aligned}
    \right.
\end{equation}
Note that equation~\eqref{vpfp} with given field $E$ and $V= \Ve$ enters in this setting  and that the linear Fokker-Planck equation corresponds to $E=0$. In both cases we take $ F= 0$ and point out that we used the commutation estimate $-\D_v (\D_v + v)f = (\D_v + v) (-\D_v)f - f $.

For the following, we take $T >0$ arbitrary and  denote by $X = L^2([0,T]\times \R^d_x, H^1_v(\R^d))$ and consider
the space $Y =\dis  \big\{ f \in X, (\D_t + v.\partial _x - (\D_x V -v). \partial _v) f \in X'\,\big\}$. The following result is classical  and we refer to~\cite[Appendix A]{Deg86} for the proof.

\begin{prop} \ph\label{propos}
Suppose $E \in L^\infty([0,T] \times \R^{d})$, $f_{0} \in L^2(\R^{2d})$ and
$F \in L^2([0,T]\times \R^d_x, H^{-1}_v).$ Then there exists a unique weak solution
$f$ of the equation~\eqref{lkfp} in the class $Y$. Moreover
 \begin{itemize}
  \item[$(i)$] If $f_0 \geq 0$ then $f \geq 0$.
\item[$(ii)$] If  $f_{0}\in L^{\infty}(\R^{2d})$, then for all $0\leq t\leq T$,
\begin{equation*}
\| f(t) \|_{L^{\infty}(\R^{2d})}\leq  e^{ d t} \|f_{0}\|_{L^{\infty}(\R^{2d})}.
\end{equation*}
\end{itemize}
 \end{prop}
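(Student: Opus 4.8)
The statement to prove is Proposition \ref{propos}: existence, uniqueness, positivity, and the $L^\infty$ bound for weak solutions of the linear Kramers–Fokker–Planck equation \eqref{lkfp} with a bounded given field $E$, $L^2$ data, and $H^{-1}_v$ source. Although the paper cites \cite[Appendix A]{Deg86}, let me sketch how I would argue it directly.

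My plan is to treat \eqref{lkfp} as a degenerate parabolic equation that is elliptic only in $v$, and to run an energy/variational argument in the space $X = L^2([0,T]\times\R^d_x, H^1_v)$ against its dual $X'$. First I would rewrite the equation as $\partial_t f + \mathcal{T} f + L_v f = G$, where $\mathcal{T} = v.\partial_x - (\partial_x V - v - \eps_0 E).\partial_v$ is a (time-dependent) transport field and $L_v = -\Delta_v - 1 + \tfrac d2\,(\text{zeroth order in }v)$ — more precisely I would isolate the good sign: $-\Delta_v f - f = (\partial_v+v).(-\partial_v)f - \ldots$, using the commutation identity already noted in the text, so that the velocity part contributes a coercive term $\|\partial_v f\|_{L^2}^2$ modulo lower-order terms controlled by Gronwall. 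The transport operator $v.\partial_x - \partial_x V.\partial_v$ is skew-adjoint on $L^2(\R^{2d})$, and the extra drift $v.\partial_v$ and $\eps_0 E.\partial_v$ produce only bounded or first-order-in-$v$ contributions that are absorbed by the $H^1_v$-coercivity and an exponential weight $e^{-\lambda t}$. Existence then follows from the Lions–Lax–Milgram theorem (the standard J.-L. Lions variational scheme for parabolic equations without symmetry): one builds approximate solutions via a Galerkin basis in $(x,v)$, derives the a priori estimate $\|f\|_{X} + \|\partial_t f + \mathcal T f\|_{X'} \lesssim \|f_0\|_{L^2} + \|F\|_{X'}$, and passes to the limit; uniqueness is the same a priori estimate applied to the difference of two solutions with zero data and zero source.

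For $(i)$, positivity, I would multiply the equation by $f_- = \max(-f,0)$ (or rather test against the negative part, justified by density / by regularizing), use that the transport part and the drift conserve or decrease $\int f_-^2$ after accounting for the $\operatorname{div}_v$ terms, and that $-\Delta_v$ contributes a nonnegative term, to get $\frac{d}{dt}\|f_-(t)\|_{L^2}^2 \le C\|f_-(t)\|_{L^2}^2$, hence $f_-\equiv 0$ since $f_0\ge 0$. For $(ii)$, the $L^\infty$ bound, the clean route is the maximum principle: set $g = e^{-dt} f$; then $g$ solves the same equation with the zeroth-order term $-f$ (coming from $-\Delta_v f - f$ after the commutation, total coefficient $+d$ from the $d$ copies of $v.\partial_v$-type terms, i.e. $\operatorname{div}_v v = d$) replaced by something with the right sign, so that $\|g_0\|_\infty$ is a supersolution; comparing $g$ with the constant $\|f_0\|_\infty$ using the parabolic comparison principle in $v$ (and the fact that transport does not create new extrema) gives $\|f(t)\|_\infty \le e^{dt}\|f_0\|_\infty$. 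Alternatively one iterates the $L^2$–energy estimate on truncations $(f-k)_+$ à la Stampacchia, but the comparison argument is shorter here.

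The main obstacle is the low regularity combined with the degeneracy: the equation has no smoothing in $x$, so one cannot directly test against $f$ in $H^1_{x,v}$, and the trace-in-time / initial-condition attainment must be handled carefully (the space $Y$ is exactly designed so that $f\in Y \Rightarrow f\in C([0,T];L^2)$ after a suitable trace theorem for this anisotropic setting — this is the technical heart, and is precisely what \cite[Appendix A]{Deg86} provides). The commutator term $\eps_0 E.\partial_v f$ with merely $E\in L^\infty$ is harmless because $\partial_v$ is controlled in $X$; the term $v.\partial_v f$, which is unbounded, is the one requiring the commutation trick $-\Delta_v f - f = (\partial_v + v)\cdot(-\partial_v f) - f$ so that it never appears as a standalone unbounded drift. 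Given all this machinery is standard and externally referenced, in the paper I would simply invoke \cite[Appendix A]{Deg86} for existence/uniqueness and the regularity class, and give the short Gronwall argument for $(i)$ and the $e^{dt}$ comparison argument for $(ii)$.
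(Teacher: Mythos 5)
Your proposal is correct and, in the end, coincides with what the paper actually does: the paper offers no proof of Proposition~\ref{propos} at all, but simply declares the result classical and refers to Appendix~A of Degond's paper, which is exactly the variational (Lions-type) existence/uniqueness argument in the anisotropic space $Y$, the truncation argument for positivity, and the comparison/truncation argument for the $e^{dt}$ bound that you sketch (including your correct identification of the coefficient $d$ coming from $\operatorname{div}_v v=d$, which explains the $e^{dt}$ despite the ``$-f$'' written in~\eqref{lkfp}). Since you also conclude that one should simply invoke that reference, there is nothing to reconcile.
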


This immediately implies the following a priori estimate on the full problem~\eqref{vpfp}.

   \begin{coro}\ph  \label{lemMax1}
Let $ f_{0}\in L^{\infty}(\R^{2d})\cap L^2(\R^{2d})$ be such that $f_{0}\geq 0$ and consider a solution of~\eqref{vpfp} such that the field $E  \in L^\infty([0,T] \times \R^{d})$.
Then, for all $0\leq t\leq T$, $f(t, .) \geq 0$ and
\begin{equation*}
\|f(t)\|_{L^{\infty}(\R^{2d})}\leq \e^{ d t} \|f_{0}\|_{L^{\infty}(\R^{2d})}.
\end{equation*}
 \end{coro}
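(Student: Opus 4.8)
The idea is to check that a solution $f$ of the full nonlinear problem~\eqref{vpfp}, for a \emph{fixed} given field $E \in L^\infty([0,T]\times\R^d)$, is actually a solution of the linear problem~\eqref{lkfp} with this particular $E$ frozen, $V = \Ve$ and $F = 0$, and then quote Proposition~\ref{propos}. So first I would rewrite~\eqref{vpfp}: expanding $-\gamma\D_v.(\D_v+v)f$ with $\gamma=1$ and using the commutation identity $-\D_v.(\D_v+v)f = (\D_v+v).(-\D_v)f - f$ that is recalled right after~\eqref{lkfp}, equation~\eqref{vpfp} becomes exactly~\eqref{lkfp} with potential $V = \Ve$, right-hand side $F=0$, and the coefficient $-(\eps_0 E + \D_x\Ve).\D_v f$ matching the term $-(\D_x V - v).\D_v f - \eps_0 E.\D_v f$ there. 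Since $\Ve$ satisfies Assumption~\ref{assumption1}, all the hypotheses on the coefficients needed for Proposition~\ref{propos} hold, and $E\in L^\infty([0,T]\times\R^d)$ is assumed.

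Second, I would verify the hypotheses of Proposition~\ref{propos} on the data: $f_0 \in L^\infty(\R^{2d})\cap L^2(\R^{2d})$ with $f_0\ge 0$ is exactly what is assumed, and $F = 0 \in L^2([0,T]\times\R^d_x,H^{-1}_v)$ trivially. Thus Proposition~\ref{propos} applies to the function $f$ (using uniqueness in the class $Y$ to identify it with the weak solution produced there, after noting $f$ lies in $Y$), giving both conclusions: part~$(i)$ yields $f(t,\cdot)\ge 0$ for all $0\le t\le T$, and part~$(ii)$ yields the pointwise-in-time $L^\infty$ bound
\begin{equation*}
\|f(t)\|_{L^\infty(\R^{2d})}\le e^{dt}\|f_0\|_{L^\infty(\R^{2d})},\qquad 0\le t\le T,
\end{equation*}
which is precisely the claim of the corollary.

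The only genuine point requiring a little care is the regularity/class issue: to invoke the \emph{uniqueness} statement of Proposition~\ref{propos} one needs to know that the solution $f$ of~\eqref{vpfp} under consideration belongs to the space $Y = \{f\in X : (\D_t + v.\D_x - (\D_x V - v).\D_v)f \in X'\}$ with $X = L^2([0,T]\times\R^d_x, H^1_v)$. Here $V = \Ve$, and the extra transport term $-\eps_0 E.\D_v f$ is in $X'$ because $E\in L^\infty$ and $\D_v f \in L^2_{t,x}H^{-1}_v$, so membership in $Y$ follows from $f$ being a (mild or weak) solution in the sense already under discussion in this subsection; this is the kind of statement that is implicitly part of ``consider a solution of~\eqref{vpfp}''. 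I expect this bookkeeping to be the main — and only — obstacle, and it is routine: once $f\in Y$ is granted, everything reduces to a direct citation of Proposition~\ref{propos}.
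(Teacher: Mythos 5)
Your proposal is correct and is exactly the paper's argument: the authors simply freeze the field $E$, observe that~\eqref{vpfp} then becomes~\eqref{lkfp} with $V=\Ve$ and $F=0$ (as they note just after~\eqref{lkfp}), and invoke Proposition~\ref{propos}, parts $(i)$ and $(ii)$. Your extra remark about verifying that the given solution lies in the uniqueness class $Y$ is the right bookkeeping point, which the paper leaves implicit.
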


%%%%%%%%%%%%%%%%%%%%%%%%%%%%%%%%%%%%%%%%%%%%%%%%%%%%%%%%%%%%%%%%%
%%%%%%%%%%%%%%%%%%%%%%%%%%%%%%%%%%%%%%%%%%%%%%%%%%%%%%%%%%%%%%%%%
%%%%%%%%%%%%%%%%%%%%%%%%%%%%%%%%%%%%%%%%%%%%%%%%%%%%%%%%%%%%%%%%%
%%%%%%%%%%%%%%%%%%%%%%%%%%%%%%%%%%%%%%%%%%%%%%%%%%%%%%%%%%%%%%%%%
%%%%%%%%%%%%%%%%%%%%%%%%%%%%%%%%%%%%%%%%%%%%%%%%%%%%%%%%%%%%%%%%%
%%%%%%%%%%%%%%%%%%%%%%%%%%%%%%%%%%%%%%%%%%%%%%%%%%%%%%%%%%%%%%%%%
%%%%%%%%%%%%%%%%%%%%%%%%%%%%%%%%%%%%%%%%%%%%%%%%%%%%%%%%%%%%%%%%%
%%%%%%%%%%%%%%%%%%%%%%%%%%%%%%%%%%%%%%%%%%%%%%%%%%%%%%%%%%%%%%%%%
%%%%%%%%%%%%%%%%%%%%%%%%%%%%%%%%%%%%%%%%%%%%%%%%%%%%%%%%%%%%%%%%%
%%%%%%%%%%%%%%%%%%%%%%%%%%%%%%%%%%%%%%%%%%%%%%%%%%%%%%%%%%%%%%%%%
%%%%%%%%%%%%%%%%%%%%%%%%%%%%%%%%%%%%%%%%%%%%%%%%%%%%%%%%%%%%%%%%%
%%%%%%%%%%%%%%%%%%%%%%%%%%%%%%%%%%%%%%%%%%%%%%%%%%%%%%%%%%%%%%%%%
\subsection{Poisson-Emden equation and equilibrium state}\label{sect32}

The aim of this subsection is to prove that  the potential $\Uinf$ associated to  the
stationary solutions of the Vlasov-Poisson-Fokker-Planck equation is in $W^{\infty,\infty}(\R^{d})$.
Recall that the equation satisfied by $\Uinf$ is
\begin{equation} \label{eqemdengene}
 -\Delta \Uinf =  \frac{e^{-(\Ve+\eps_0 \Uinf)}}{\int
e^{-(\Ve+\eps_{0}\Uinf)} dx}
\end{equation}
where we recall that $\eps_0$ is varying in a small fixed neighbourhood of $0$, and that $\eps_{0}>0$ in the case of dimension $d=2$.

\subsubsection{Case $d=3$}
When we are in the repulsive interaction
case ($\eps_0>0$), the existence and uniqueness of a (Green) solution of this
equation is given by a result of Dolbeault~\cite{Dol91} (see also~\cite{Dol99}) under a
light hypothesis on the external potential. We first quote his
result in dimension $d= 3$ and in the Coulombian case

\begin{prop}[\cite{Dol91}, Section 2] \ph\label{pedol}
Let $U_e \in L^\infty_\textrm{\scriptsize{loc}}(\R^3)$
and $M>0$. Assume that   ${e^{-U_e} \in L^1}(\R^{3})$, then there exists a unique  solution ${U \in L^{3,\infty}}(\R^{3})$ of the
Poisson-Emden equation
\begin{equation} \label{eqemden2}
-\Delta U = M \frac{e^{-(U_e+U)}}{\int e^{-(U_e+U)} dx}.
\end{equation}
Moreover $U\geq 0$.
\end{prop}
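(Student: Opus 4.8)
The plan is to reformulate the equation variationally and apply the direct method of the calculus of variations. Introduce the Newtonian potential $G(x) = \frac{1}{4\pi|x|}$, so that a solution corresponds to $U = G\star\rho$ with $\rho \defegal -\Delta U \geq 0$, $\int\rho\,dx = M$; conversely any admissible density produces a potential by convolution. The fixed point equation $\rho = M\,e^{-(U_e + G\star\rho)}/\!\int e^{-(U_e+G\star\rho)}$ is precisely the Euler--Lagrange equation, under the constraints $\rho\geq 0$, $\int\rho = M$, of the free energy functional
\begin{equation*}
\mathcal F(\rho) = \int \rho\log\rho\,dx + \int \rho\, U_e\,dx + \frac12\iint \rho(x)\,G(x-y)\,\rho(y)\,dx\,dy .
\end{equation*}
So I would first establish existence and uniqueness of a minimizer of $\mathcal F$ over $\{\rho\in L^1(\R^3):\rho\geq 0,\ \int\rho = M\}$, then recover $U$ and its properties.

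For the lower bound and the needed compactness, the key observation is that, writing $d\mu = e^{-U_e}dx$ (a finite measure of mass $Z_e = \|e^{-U_e}\|_{L^1}$ by hypothesis),
\begin{equation*}
\int\rho\log\rho\,dx + \int\rho\, U_e\,dx = \int \frac{\rho}{e^{-U_e}}\log\frac{\rho}{e^{-U_e}}\,d\mu \geq M\log\frac{M}{Z_e},
\end{equation*}
by Jensen's inequality applied to $t\mapsto t\log t$; since the interaction term is nonnegative ($G>0$), $\mathcal F$ is bounded below. Along a minimizing sequence $(\rho_n)$, the relative entropy with respect to the finite measure $\mu$ stays bounded, which by the de la Vallée-Poussin criterion together with the tightness of $\mu$ forces $(\rho_n)$ to be uniformly integrable and tight, hence weakly relatively compact in $L^1$ (Dunford--Pettis). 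Each of the three terms of $\mathcal F$ is convex, and one checks they are sequentially weakly lower semicontinuous on $L^1$ (the entropy by convexity and Fatou; the potential term by splitting $U_e$ into its bounded-on-compacts part and controlling the tail via the entropy bound; the interaction term equals a constant times $\|\rho\|_{\dot H^{-1}}^2$, which is convex and l.s.c.). A limit point is thus a minimizer, and strict convexity of the entropy gives its uniqueness.

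Writing the Euler--Lagrange equation for this minimizer yields $\rho = M\,e^{-(U_e + G\star\rho)}/\!\int e^{-(U_e+G\star\rho)}$, so that $U \defegal G\star\rho$ solves $-\Delta U = \rho$, i.e. equation~\eqref{eqemden2}. The regularity $U\in L^{3,\infty}(\R^3)$ follows from the weak Young (Hardy--Littlewood--Sobolev) inequality, since $G\in L^{3,\infty}(\R^3)$ and $\rho\in L^1(\R^3)$ give $\|U\|_{L^{3,\infty}}\leq C\,\|G\|_{L^{3,\infty}}\|\rho\|_{L^1} = CM$, and positivity $U\geq 0$ is immediate from $G>0$, $\rho\geq 0$. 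For uniqueness of the solution within $L^{3,\infty}$: any such $U$ has $\rho = -\Delta U = M\,e^{-(U_e+U)}/\!\int e^{-(U_e+U)}$ nonnegative of mass $M$ with finite free energy and satisfying the Euler--Lagrange equation, hence coincides with the unique minimizer, and then $U = G\star\rho$ is determined (a harmonic function in $L^{3,\infty}(\R^3)$ being $0$). The step I expect to be the main obstacle is precisely the compactness and lower-semicontinuity analysis under the weak assumption $U_e\in L^\infty_{\mathrm{loc}}$ with merely $e^{-U_e}\in L^1$: ensuring tightness of the minimizing sequence and lower semicontinuity of $\int\rho\,U_e$ when $U_e$ is only locally bounded, and possibly very negative on small sets, must be done through the relative-entropy bound rather than any direct coercivity in a weighted space.
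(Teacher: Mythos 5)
The paper offers no proof of this proposition: it is quoted verbatim from \cite{Dol91}, Section 2, so there is nothing internal to compare against. Your variational argument --- minimizing the free energy $\mathcal F(\rho)$ over $\{\rho\ge 0,\ \int\rho=M\}$, exploiting the strict convexity coming from the positive sign of the Coulomb interaction, and reading the Poisson--Emden equation off the Euler--Lagrange condition --- is the standard route for the repulsive case and is, in substance, the method of the cited reference. The lower bound via Jensen, the Dunford--Pettis compactness from the entropy bound relative to the finite measure $e^{-U_e}\,dx$, the identity $U=G\star\rho\ge 0$, the weak Young inequality giving $U\in L^{3,\infty}$, and the triviality of harmonic functions in $L^{3,\infty}(\R^3)$ are all correct.

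Two points need tightening. First, under the stated hypotheses the terms $\int\rho\log\rho$ and $\int\rho\,U_e$ are not separately well defined ($U_e$ is only locally bounded and may be very negative on small sets), so both the lower bound \emph{and} the lower semicontinuity must be run on the relative entropy $\int (\rho e^{U_e})\log(\rho e^{U_e})\,e^{-U_e}dx$ as a single object; you do this for the lower bound but then propose to ``split $U_e$'' for the l.s.c., which is the wrong organization --- invoke instead the joint weak lower semicontinuity of the relative entropy with respect to the fixed finite measure $e^{-U_e}dx$. Second, and more substantively, your uniqueness step asserts that an arbitrary solution $U\in L^{3,\infty}$ yields a density ``with finite free energy'', hence equal to the minimizer. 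This is not automatic: from $\rho=-\Delta U\le (M/Z)\,e^{-U_e}\in L^1$ one does not get $\iint\rho\,G\,\rho<\infty$ (nor $\int\rho\,U<\infty$, since $\rho\in L^1$ does not pair with $U\in L^{3,\infty}$), because $e^{-U_e}$ may have unbounded local suprema at infinity. You must either establish this finiteness or prove uniqueness by a direct monotonicity computation between two solutions $U_1,U_2$ (pairing $\rho_1-\rho_2$ against $U_1-U_2$ and using that $s\mapsto e^{-s}$ is decreasing), taking care of the same integrability issue there. As it stands, the uniqueness claim within all of $L^{3,\infty}$ is the one genuine gap in the proposal.
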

The main property of $U$ which will be needed in the following is $U\geq0$, that's why we do not even define precisely the space $L^{3,\infty}(\R^{3})$. For more details, we address to~\cite{Dol91}.\medskip

We then state another result of  Bouchut and Dolbeault  in the Newtonian case ($\eps_0 <0$). This result happens to hold only for small $M$.

\begin{prop}[{\cite[Theorem 3.2 and Proposition 3.4]{BD95}}]  \ph\label{newt}   Assume that $e^{-U_e} \in L^1(\R^3) \cap L^\infty(\R^3)$ and is not identically equal to $0$. Then there exists $M_0 <0$ such that for all $M_0 <M \leq 0$
there exists  a bounded continuous function   of equation~\eqref{eqemden2} such that $\lim_{x \rightarrow \infty} U(x) =0 $.
\end{prop}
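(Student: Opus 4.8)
The plan is to solve \eqref{eqemden2} by a contraction mapping argument that closes only for $|M|$ small, which is precisely what produces the threshold $M_0$. Set $I_0=\int_{\R^3}e^{-U_e}\,dx$, which is finite because $e^{-U_e}\in L^1$ and positive because $e^{-U_e}\not\equiv 0$. For a function $U$ put
\begin{equation*}
I[U]=\int_{\R^3}e^{-(U_e+U)}\,dx,\qquad \rho[U]=M\,\frac{e^{-(U_e+U)}}{I[U]},\qquad T(U)=\frac{1}{4\pi|x|}\star_x\rho[U].
\end{equation*}
Since $-\Delta\big(\tfrac{1}{4\pi|x|}\big)=\delta_0$ in $\R^3$, any fixed point of $T$ solves \eqref{eqemden2}. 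The structural fact that makes the Newtonian case tractable for small mass is the sign: as $M\le 0$ one has $\rho[U]\le 0$, hence $T(U)\le 0$, hence $e^{-T(U)}\ge 1$ and $I[T(U)]\ge I_0>0$; so along the iteration the nonlocal denominator stays bounded below by a fixed positive constant.

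First I would fix the functional setting: take $R=1$ and let $\mathcal{E}=\{\,U\in C_0(\R^3):\ \|U\|_{L^\infty}\le R,\ U\le 0\,\}$, a closed subset of the Banach space $(C_0(\R^3),\|\cdot\|_{L^\infty})$, hence a complete metric space. For $U\in\mathcal{E}$, integrating the defining formula gives $\|\rho[U]\|_{L^1}=|M|$, and using $I[U]\ge I_0$, $\|e^{-U}\|_{L^\infty}\le e^{R}$ and $e^{-U_e}\in L^\infty$ gives $\|\rho[U]\|_{L^\infty}\le \tfrac{|M|}{I_0}\|e^{-U_e}\|_{L^\infty}e^{R}=:C_1|M|$. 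The standard interpolation bound for the Newtonian potential in $\R^3$, $\big\|\tfrac{1}{4\pi|x|}\star g\big\|_{L^\infty}\le C\|g\|_{L^1}^{2/3}\|g\|_{L^\infty}^{1/3}$, then yields $\|T(U)\|_{L^\infty}\le C|M|^{2/3}(C_1|M|)^{1/3}=C_2|M|\le R$ as soon as $|M|\le|M_0|$ for a suitable $M_0<0$; moreover $T(U)\le 0$ automatically, and $T(U)\in C_0(\R^3)$ because $\rho[U]\in L^1\cap L^\infty$ (continuity and decay at infinity of the Newtonian potential of such a density follow from a routine near/far splitting of the convolution integral, the near singular part handled by Hölder's inequality with $|x|^{-1}\in L^{p'}_{\mathrm{loc}}$ for some $p'<3$, the far part by $\rho[U]\in L^1$). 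Hence $T$ maps $\mathcal{E}$ into itself.

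Next I would establish the contraction estimate. On $\mathcal{E}$ the map $U\mapsto e^{-(U_e+U)}$ is Lipschitz in $L^1$ and in $L^\infty$ (with constants controlled by $e^{R}$, $\|e^{-U_e}\|_{L^1}$, $\|e^{-U_e}\|_{L^\infty}$) and $I[U]\ge I_0$, so a direct computation gives $\|\rho[U_1]-\rho[U_2]\|_{L^1}+\|\rho[U_1]-\rho[U_2]\|_{L^\infty}\le C_3|M|\,\|U_1-U_2\|_{L^\infty}$. Applying the Newtonian potential bound once more, $\|T(U_1)-T(U_2)\|_{L^\infty}\le C_4|M|\,\|U_1-U_2\|_{L^\infty}$, which is a strict contraction after possibly shrinking $M_0$. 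The Banach fixed point theorem then gives $U\in\mathcal{E}$ with $T(U)=U$; by construction $U$ is continuous, bounded, $\le 0$, with $\lim_{|x|\to\infty}U(x)=0$, and $\Delta U=-\rho[U]\in L^\infty$ in the distributional sense, which is all that is claimed (one should not expect more, since $U_e$ is only $L^\infty_{\mathrm{loc}}$).

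The only genuine obstacle is the smallness of $|M|$: for the attractive potential large mass leads to gravitational collapse and there is in general no stationary solution, so the threshold $M_0$ is essential — concretely, the focusing sign of $M$ prevents $T$ from being a self-map of any fixed ball, let alone a contraction, once $|M|$ is large. Everything else — the $L^1$/$L^\infty$ bounds on $\rho[U]$, its Lipschitz dependence on $U$, and the mapping properties of the Newtonian potential (including the $C_0$ decay) — is standard. An alternative would be a sub/super-solution scheme (with $\overline U\equiv 0$ a supersolution since $M<0$) or a variational argument, as in \cite{BD95}; for small mass the contraction above is the quickest.
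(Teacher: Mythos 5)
Your proof is correct, but note that the paper does not prove this statement at all: Proposition~\ref{newt} is quoted from \cite{BD95} (Theorem 3.2 and Proposition 3.4) and used as a black box, its only role in the sequel being to guarantee that $\eps_0\Uinf$ exists and is uniformly bounded in the attractive case. What you have written is therefore a self-contained substitute for the citation rather than a variant of an argument in the text. The mechanics check out: for $U\le 0$ bounded, $I[U]=\int e^{-(U_e+U)}\,dx\ge\int e^{-U_e}\,dx=I_0>0$ keeps the nonlocal denominator under control; $\|\rho[U]\|_{L^1}=|M|$ and $\|\rho[U]\|_{L^\infty}\le C_1|M|$ on your ball; the interpolation bound $\|\,|x|^{-1}\star g\,\|_{L^\infty}\lesssim\|g\|_{L^1}^{2/3}\|g\|_{L^\infty}^{1/3}$ (split the convolution at radius $r$ and optimize) makes $T$ a self-map of $\{U\in C_0(\R^3):U\le 0,\ \|U\|_{L^\infty}\le 1\}$ and, together with the Lipschitz dependence of $\rho[U]$ on $U$, a contraction once $|M|$ is small; and the sign $M\le 0$ is exactly what keeps the iterates nonpositive and hence keeps $I[U]$ bounded below, so the threshold $M_0$ appears for the right structural reason. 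Two remarks. First, your argument gives slightly more than the quoted statement, namely uniqueness of the solution in the ball $\mathcal{E}$, which is consistent with the paper's later use of ``the unique solution'' $\Uinf$ in the small-$|\eps_0|$ regime. Second, it says nothing outside the perturbative range $M_0<M\le 0$ --- but neither does the statement, and for the purposes of this paper (where $M=\eps_0$ is taken small anyway) your contraction is the most economical route to exactly the conclusion that is needed.
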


Now Assumption~\ref{assumption1} on the exterior potential
$\Ve$ implies that $ e^{-\Ve} \in L^1(\R^3) \cap L^\infty(\R^3) $. As a consequence  we
can apply  Proposition~\ref{pedol} at least in the case when $\eps_0$ is small  to $U=\eps_0 \Uinf$, $U_e=\Ve$, $M=\eps_0$
 and $d=3$ to~\eqref{eqemdengene}  and we get a unique solution $\Uinf$ in
$L^{3,\infty}$ when $\eps_0>0$. Similarly we can apply Proposition~\ref{newt}
 when $\eps_0<0$ and we get $\Uinf \in L^\infty$. Notice that in our context, $|\eps_{0}|$ is small and hence both Propositions~\ref{pedol} and~\ref{newt} apply here.
\bigskip

Actually, the regularity of $\Uinf$ is
improved under the assumption $e^{-\Ve} \in \ss(\R^{3})$, and we can also get some uniformity with respect to the parameter $\eps_0$.

 \begin{prop}  \ph\label{corpeintro} Let $d=3$. Suppose that $\Ve$ satisfies
 Assumption~\ref{assumption1}. Then
the  unique solution $\Uinf$ of the Poisson-Emden equation~\eqref{eqemdengene} is in $W^{\infty,\infty}(\R^{3})$, with semi-norms
uniformly bounded w.r.t. $\eps_0$ varying in a small fixed neighbourhood of $0$.
 \end{prop}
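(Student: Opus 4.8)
The plan is to bootstrap the regularity of $\Uinf$ starting from the low-regularity information already in hand, namely $\Uinf \in L^{3,\infty}(\R^3)$ (when $\eps_0>0$) or $\Uinf \in L^\infty(\R^3)$ (when $\eps_0<0$), and $\Uinf \geq 0$, together with the quantitative smallness of $|\eps_0|$. The key observation is that the right-hand side of~\eqref{eqemdengene}, which we abbreviate as $\di \rho_\infty = e^{-(\Ve+\eps_0\Uinf)}/\!\int e^{-(\Ve+\eps_0\Uinf)}dx$, inherits Schwartz-type decay and smoothness from the hypothesis $e^{-\Ve}\in\ss(\R^3)$ as soon as we know $\Uinf$ is bounded (or even just controlled at infinity): since $\Uinf\geq 0$, one has $0\leq \rho_\infty \leq C\, e^{-\Ve}\cdot e^{\eps_0^-\|\Uinf\|_\infty}$ in the Newtonian case, and in the repulsive case $e^{-\eps_0\Uinf}\leq 1$ gives even more directly $0\leq\rho_\infty\leq C e^{-\Ve}$. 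Hence $\rho_\infty \in L^1\cap L^\infty$ with good decay, and in particular $\rho_\infty \in L^p(\R^3)$ for every $p\in[1,\infty]$.

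The main steps are then as follows. First, I would record that $\Uinf$, being the Green (Newtonian) potential of $\rho_\infty$, i.e. $\Uinf = \frac{1}{4\pi|x|}\star \rho_\infty$, satisfies $\Uinf\in W^{2,p}_{\loc}$ for all $p<\infty$ by standard Calderón--Zygmund elliptic estimates, hence $\Uinf \in C^{1,\alpha}_{\loc}$ for all $\alpha<1$ by Sobolev embedding. Second, I would upgrade this to a \emph{global} $W^{\infty,\infty}$ bound: once $\Uinf$ is (locally) H\"older continuous, $\rho_\infty = e^{-(\Ve+\eps_0\Uinf)}/Z$ is a smooth function (composition of the smooth $\Ve + \eps_0\Uinf$ with $\exp$, renormalised), and all its derivatives decay rapidly because $e^{-\Ve}\in\ss$ while $\Uinf$ and all its derivatives grow at most polynomially (the derivatives of $\Uinf$ are controlled by Newtonian-potential estimates applied to $\rho_\infty$). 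Thus $\rho_\infty \in \ss(\R^3)$. Third, with $\rho_\infty\in\ss$, convolution with the Newtonian kernel gives $\Uinf$ and all its derivatives: for $|\beta|\geq 1$, $\partial^\beta\Uinf = \partial^{\beta-e_i}\big(\frac{x_i}{4\pi|x|^3}\big)\star\rho_\infty$ up to constants, and since the kernel is homogeneous of degree $-2$ (hence locally integrable in $\R^3$ and decaying) while $\rho_\infty$ is Schwartz, each $\partial^\beta\Uinf$ is bounded; combined with $\Uinf\in L^\infty$ already established, this yields $\Uinf\in W^{\infty,\infty}(\R^3)$.

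The uniformity with respect to $\eps_0$ in a fixed small neighbourhood of $0$ is the point requiring care, and I expect it to be the main obstacle. It is handled by checking that every estimate above depends on $\eps_0$ only through quantities that are uniformly controlled: the normalising constant $Z = \int e^{-(\Ve+\eps_0\Uinf)}dx$ stays bounded away from $0$ and $\infty$ (using $0\leq\eps_0\Uinf$ in the repulsive case, and $|\eps_0|\,\|\Uinf\|_\infty\leq C$ with $\|\Uinf\|_\infty$ bounded uniformly by Proposition~\ref{newt} --- one should track that $M_0$ and the resulting sup-norm bound are uniform for $M$ near $0$ --- in the Newtonian case); the Calderón--Zygmund and Newtonian-potential constants are universal; and the seminorms of $\rho_\infty$ in $\ss$ are bounded in terms of $\|e^{-\Ve}\|$-Schwartz-seminorms, the fixed bound on $\|\Vinf''\|_{W^{\infty,\infty}}$-type quantities, and the (uniform) bounds on lower-order derivatives of $\Uinf$ obtained at the previous stage. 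Running the bootstrap with all constants tracked uniformly then gives the claimed uniform $W^{\infty,\infty}$ seminorm bounds. One subtlety in dimension $3$ worth a remark is that the Newtonian kernel itself is not bounded, so the very first bound $\|\Uinf\|_\infty\leq C$ must come from Propositions~\ref{pedol}--\ref{newt} (or a direct estimate $\|\frac{1}{|x|}\star\rho_\infty\|_\infty \lesssim \|\rho_\infty\|_{L^{3/2,1}} + \|\rho_\infty\|_{L^{3,\infty}}$-type splitting using $\rho_\infty\in L^1\cap L^\infty$), not from the convolution estimate used for the higher derivatives.
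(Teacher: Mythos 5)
Your strategy is sound and is at heart the same bootstrap as the paper's: both arguments seed the iteration with the a priori bound $0\leq \Uinf\leq C\,\frac{1}{4\pi|x|}\star e^{-\Ve}\in L^\infty$ coming from Propositions~\ref{pedol} and~\ref{newt} (positivity in the repulsive case, boundedness in the Newtonian one), and then trade regularity back and forth between $\Uinf$ and the density $\rho_\infty=Z^{-1}e^{-(\Ve+\eps_0\Uinf)}$. The implementations differ: the paper fixes one $p\in(3,\infty)$, writes $(-\Delta+1)^2\Uinf=-\Delta\rho_\infty+2\rho_\infty+\Uinf$ and climbs the scale $W^{2k,p}$ by elliptic regularity, concluding with $\bigcap_k W^{2k,p}\subset W^{\infty,\infty}$; you instead work directly with $L^\infty$-based bounds through the Newtonian potential representation and aim at $\rho_\infty\in\ss(\R^3)$. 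Your route is arguably more explicit on the uniformity in $\eps_0$ (which the paper merely declares clear). Two points need tightening, however. First, the assertion in your second step that $\rho_\infty$ is smooth with rapidly decaying derivatives, hence Schwartz, is circular as phrased: smoothness of $\rho_\infty$ presupposes smoothness of $\Uinf$. Steps two and three must therefore be folded into a single induction on the order of derivatives ($\Uinf\in W^{k,\infty}$ implies $\rho_\infty\in W^{k,1}\cap W^{k,\infty}$, which implies $\Uinf\in W^{k+1,\infty}$), which your closing paragraph gestures at but does not actually set up. Second, the formula $\partial^\beta\Uinf=\partial^{\beta-e_i}\big(\tfrac{x_i}{4\pi|x|^3}\big)\star\rho_\infty$ is only usable for $|\beta|=1$: for $|\beta|\geq2$ the differentiated kernel is homogeneous of degree $\leq -3$ and no longer locally integrable, so the convolution is not defined as written. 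You must instead integrate by parts and write $\partial^\beta\Uinf=\partial_i\big(\tfrac{1}{4\pi|x|}\big)\star\partial^{\beta-e_i}\rho_\infty$, placing all but one derivative on $\rho_\infty$ --- which is precisely why the induction above is needed rather than a one-shot convolution estimate. With these repairs the proof goes through and is a legitimate alternative to the paper's $W^{2k,p}$ bootstrap.
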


\begin{proof}[Proof of Proposition~\ref{corpeintro}] In order to prove  that
$\Uinf \in W^{\infty,\infty}$, it is sufficient to prove that  the
(Green) solution $\Uinf$ of the following Poisson-Emden-type equation
\begin{equation} \label{eqemdentype}
- \Delta \Uinf =  C_0^{-1} e^{-(\Ve+\eps_0 \Uinf)}
\end{equation}
is in $W^{\infty,\infty}$, where
$$
C_0 = \int e^{-(\Ve(x)+\eps_0 \Uinf(x))} dx
$$
 is the normalization constant. We first work on $\eps_0 \Uinf$ and note that it is given by
$$
 \eps_0 \Uinf = \frac{ \eps_0 C_0^{-1}}{ 4\pi} \frac{1}{|x|} \star e^{-(\Ve +\eps_0 \Uinf)}.
$$
We  then \Bk consider  the Green solution $U_e$ of $-\Delta U_e = e^{-\Ve}$ given by
$$
U_e =   \frac{1}{ 4\pi |x| } \star e^{-\Ve}.
$$
From Propositions~\ref{pedol} and~\ref{newt} we get directly that
    $\eps_0 \Uinf$ exists,  at least \Bk  for $\eps_0$ varying  in a small neighbourhood of $0$, and that  it is either non-negative or uniformly bounded.  It implies that there exists a constant $C>0$ uniform in $\eps_0$  such that
    $0 \leq  \Uinf \leq C U_e$ since we also have
    $$
  \Uinf =   \frac{C_0^{-1}}{4\pi |x|} \star  e^{-(\Ve +\eps_0 \Uinf)}.
$$

From the Hardy-Littlewood Sobolev inequalities or by a direct computation, we have $U_e \in L^p $
for $3 < p \leq  \infty$. Therefore this is also the case for
$\Uinf$. Since we directly have that $-\Delta \Uinf \in L^p$ for  all $p \in [1,\infty]$ from (\ref{eqemdentype}),
we get that
$$
-\Delta \Uinf + \Uinf \in L^p, \ \ \ 3 < p < \infty
$$
and this gives $\Uinf \in W^{2,p}$ by elliptic regularity in $\R^d$
(see for example~\cite{Ste70},~\cite{Won99}).

Now  we \Bk shall use a bootstrap argument to prove that $\Uinf \in
W^{\infty,\infty}$.  Let $ 3 < p <\infty$ be fixed in the
following. We note that
\begin{equation} \label {delta2fois}
\begin{aligned}
 (-\Delta +1)^2 \Uinf & = -\Delta \sep{ C_0^{-1} e^{-(\Ve + \eps_0 \Uinf)}} + 2 C_0^{-1} e^{-(\Ve +\eps_0 \Uinf)} +
 \Uinf
 \end{aligned}
\end{equation}
and we study each term in order to prove that this expression is uniformly in $L^p$.
Since $\Uinf \in
L^\infty$, we have $e^{-\eps_0\Uinf} \in L^\infty$ and  we get for all $ 1\leq i,j\leq 3 \Bk$
$$
\D_{ij} (e^{-\eps_0\Uinf}) = \sep{ -\eps_0  \D_{ij} \Uinf + \eps_0^2 (\D_i \Uinf)(\D_j \Uinf)}e^{-\eps_0 \Uinf} \in
L^{p}
$$
uniformly, since on the one hand  $\Uinf \in W^{2,p}$ uniformly and on the other hand
\begin{equation} \label{ppp}
\forall k, \ \ \D_k \Uinf \in L^{2p} \Longrightarrow  (\D_i \Uinf)(\D_j \Uinf)
\in L^p.
\end{equation}
In a direct way we also get $\D_k e^{-\eps_0 \Uinf} \in L^p$.
Since $e^{-V_e} \in W^{2,p}$ and using the same trick as in~\eqref{ppp}, this  gives from~\eqref{delta2fois} that $\Uinf \in W^{4,p}$
for the arbitrary fixed $
3< p <\infty$. By a bootstrap argument using the same method
we  get that
$$
\Uinf\in W^{2k, p},
$$
for all $k\in \N$ and therefore
$$
\Uinf \in \bigcap_{k\in \N} W^{2k, p} \subset W^{\infty,\infty}.
$$
The
uniformity  w.r.t. $\eps_0$ is also clear and the proof of
  Proposition~\ref{corpeintro} is complete.
  \end{proof}

\subsubsection{Case $d=2$} We consider here only the Coulombian case ($\eps_{0}>0$).

In this context, we are able to prove the following result

 \begin{prop}  \ph\label{corpeintro1} Let $d=2$. Suppose that $\Ve$ satisfies
 Assumption~\ref{assumption1}. Then
the  unique solution $\Uinf$ of the Poisson-Emden equation~\eqref{eqemdengene} is in $W^{\infty,\infty}(\R^{2})$, with semi-norms
uniformly bounded w.r.t. $\eps_0>0$ varying in a small fixed neighbourhood of $0$.
 \end{prop}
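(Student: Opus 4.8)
The plan is to adapt the bootstrap argument used in the proof of Proposition~\ref{corpeintro} for $d=3$, with the main difference being the nature of the Green function of $-\Delta$ in dimension $2$, namely $-\frac{1}{2\pi}\ln|x|$, which is not positive and does not decay at infinity. First I would invoke the existence and uniqueness theory for the Poisson-Emden equation~\eqref{eqemdengene} in the two-dimensional Coulombian case $\eps_0>0$ (this is the content of the result of Dolbeault~\cite{Dol91}, or can be obtained by the methods of~\cite{Dol99}), which yields a unique Green solution $\Uinf$; from this starting point the goal is purely one of elliptic regularity. As in the $d=3$ case, it suffices to prove that the solution of the reduced equation
\begin{equation*}
-\Delta \Uinf = C_0^{-1} e^{-(\Ve + \eps_0 \Uinf)}, \qquad C_0 = \int e^{-(\Ve + \eps_0 \Uinf)}dx,
\end{equation*}
lies in $W^{\infty,\infty}(\R^2)$, uniformly for $\eps_0>0$ small.

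The bulk of the argument is a bootstrap on Sobolev regularity. The right-hand side $C_0^{-1}e^{-(\Ve+\eps_0\Uinf)}$ is bounded (using $\Ve\geq 0$ and that $\eps_0\Uinf$ is bounded, or at least bounded below, from the one-dimensional-type positivity of $\Uinf$ provided by~\cite{Dol91}), and since $e^{-\Ve}\in\ss(\R^2)$ it is in every $L^p$, $1\leq p\leq\infty$. Then I would write, exactly as in~\eqref{delta2fois},
\begin{equation*}
(-\Delta+1)^2\Uinf = -\Delta\bigl(C_0^{-1}e^{-(\Ve+\eps_0\Uinf)}\bigr) + 2C_0^{-1}e^{-(\Ve+\eps_0\Uinf)} + \Uinf,
\end{equation*}
expand the Laplacian of the exponential via the chain rule into terms involving $\D_{ij}\Uinf$, $\D_{ij}\Ve$, and products $(\D_i\Uinf)(\D_j\Uinf)$, and use the elementary implication that $\D_k\Uinf\in L^{2p}$ for all $k$ forces $(\D_i\Uinf)(\D_j\Uinf)\in L^p$; iterating elliptic regularity for $(-\Delta+1)$ on $\R^2$ (see~\cite{Ste70},~\cite{Won99}) then gives $\Uinf\in W^{2k,p}$ for all $k$, hence $\Uinf\in W^{\infty,\infty}(\R^2)$, with all constants uniform in $\eps_0$. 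The uniformity follows because $C_0$ is bounded above and below uniformly and because all the derivatives of $\Ve$ entering the estimates are fixed.

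The main obstacle — and the only genuine departure from the $d=3$ proof — is getting the argument \emph{started}, i.e. showing that $\Uinf$ itself, or at least $\D\Uinf$, belongs to some $L^p$ space so that the bootstrap can begin. In dimension $3$ this came for free from $\Uinf=\frac{C_0^{-1}}{4\pi|x|}\star e^{-(\Ve+\eps_0\Uinf)}$ together with Hardy--Littlewood--Sobolev, but in dimension $2$ the logarithmic kernel means $\Uinf$ grows like $\ln|x|$ at infinity and is genuinely not in any $L^p$. The fix is to work with the gradient rather than with $\Uinf$ itself: $\D_k\Uinf = -\frac{C_0^{-1}}{2\pi}\,\frac{x_k}{|x|^2}\star e^{-(\Ve+\eps_0\Uinf)}$, and since the integrand is a Schwartz function and the kernel $x_k/|x|^2$ is locally integrable with $|x|^{-1}$ decay, one checks that $\D_k\Uinf\in L^p(\R^2)$ for all $2<p\leq\infty$ (integrability near the origin and at infinity handled separately). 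Feeding $\D\Uinf\in L^p$ and $\D^2\Uinf=\D^2(-\Delta)^{-1}(\text{Schwartz})\in L^p$ into the equation launches the bootstrap exactly as before, and the fact that $\Uinf$ is merely of logarithmic growth causes no further trouble because every quantity appearing in~\eqref{delta2fois} involves only derivatives of $\Uinf$, never $\Uinf$ undifferentiated in an $L^p$ norm — except the benign term $\Uinf$ on the far right, which only needs to be controlled locally and can be absorbed, or the whole scheme can be run on $\D\Uinf$ directly. I would therefore organize the write-up as: (1) quote existence/uniqueness in $d=2$; (2) establish $\D\Uinf\in L^p$, $2<p<\infty$, via the explicit convolution formula; (3) obtain $\Uinf\in W^{2,p}_{\mathrm{loc}}$ or rather $\D\Uinf\in W^{1,p}$ by elliptic regularity; (4) run the $(-\Delta+1)^2$ bootstrap as in Proposition~\ref{corpeintro}, tracking uniformity in $\eps_0$.
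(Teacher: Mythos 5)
Your overall plan (reduce to elliptic regularity, then bootstrap with the $(-\Delta+1)^2$ identity) is the right one, but the route you take to launch it departs from the paper's and, as written, does not prove the stated result. The paper's proof is very short: it quotes \cite[page 199]{Dol91} for the existence and uniqueness of a solution with $\Uinf\in L^{p}(\R^{2})$ for every $1\leq p<\infty$ and $\nabla\Uinf\in L^{2}(\R^{2})$, notes $\Uinf\geq 0$ by the maximum principle, and then adapts the $d=3$ bootstrap verbatim. The point is that the $L^{p}$ integrability of $\Uinf$ itself (not merely of its gradient) is part of what Dolbeault's two-dimensional theory supplies, so the step you single out as the main obstacle is already taken care of by the quoted input.

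Your alternative --- accepting that $\Uinf$ grows logarithmically and bootstrapping on $\nabla\Uinf$ --- has a genuine gap: the conclusion $\Uinf\in W^{\infty,\infty}(\R^{2})$ contains the zeroth-order bound $\Uinf\in L^{\infty}$, and no amount of control on $\nabla\Uinf$ and higher derivatives can recover it if $\Uinf$ is genuinely unbounded. This $L^{\infty}$ bound is not cosmetic: the rest of the paper uses $\|\Uinf\|_{W^{2,\infty}}$ --- including the sup of $\Uinf$ itself --- for the equivalence of the weighted $B$-norms and for the uniform spectral gap of $\Kinf$. Your write-up is also internally inconsistent on this point: you invoke positivity (or lower-boundedness) of $\Uinf$ to control $e^{-\eps_{0}\Uinf}$ while simultaneously asserting logarithmic behaviour at infinity; if the latter were true you would need the quantitative logarithmic lower bound together with the Schwartz decay of $e^{-\Ve}$ just to keep the density in $L^{1}\cap L^{\infty}$, and in any case the proposition as stated would be out of reach. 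The fix is to use the full strength of Dolbeault's $d=2$ result as the paper does: with $\Uinf\in L^{p}$ for all finite $p$ and $-\Delta\Uinf\in L^{p}$ for all $p$ from the equation, elliptic regularity gives $\Uinf\in W^{2,p}$, hence $\Uinf\in L^{\infty}$ by Sobolev embedding in dimension $2$, and the $(-\Delta+1)^{2}$ bootstrap of Proposition~\ref{corpeintro} then runs without modification.
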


\begin{proof}
Notice that when $d=2$, the equation~\eqref{eqemdengene} is equivalent to
$$
  \Uinf =  -\frac1{2\pi}\ln |x|\star  e^{-(\Ve +\eps_0 \Uinf)}.
$$
The existence and uniqueness of a solution $U_{\infty}\in L^{p}(\R^{2})$ for any $1\leq p<\infty$ with $\nabla U_{\infty} \in L^{2}(\R^{2})$ is proved in~\cite[page~199]{Dol91}. Moreover, the maximum principle ensures that $U_{\infty}\geq 0$. It is then straightforward to adapt the proof of the case $d=3$ to conclude.
\end{proof}

In the Newtonian case ($\eps_{0}<0$), and for particular choices of~$V_{e}$ (e.g. $V_{e}(x)=|x|^{2}$, see \cite{BDP}), there exist solutions to the equation~\eqref{eqemdengene}, but uniqueness is unknown, even  under additional assumptions on the solution (radial symmetry, regularity, decay at infinity).  However it would be interesting to prove the trend to equilibrium also in this case. We refer to~\cite{BDP}, where the authors obtained  such a  result for a related problem.

\begin{rema}  \ph To end this section we notice that since $\Uinf \in W^{\infty,\infty}(\R^{d})$, we get that the potential at infinity $\Ve + \eps_0 \Uinf$ satisfies the same hypothesis as $\Ve$ alone. As a consequence it will be possible to apply to $\Kinf$ all the properties obtained for any generic Fokker-Planck operator~$K$ associated to a generic potential $V$ satisfying Assumptions\;\ref{assumption1} and~\ref{assumption2}.  This will be crucial in the next section, in which we study  the exponential convergence to the equilibrium. A second remark is that the total potential at equilibrium is not explicit. In particular, the Green function for the equation $\D_t f + \Kinf f$ is not known. This justifies a posteriori the abstract study (anyway with explicit constants) performed in the linear section. In the next section we first go on with the study of a generic linear Fokker-Planck operator by studying the long time behaviour and the exponential decay in time.
 \end{rema}

%%%%%%%%%%%%%%%%%%%%%%%%%%%%%%%%%%%%%%%%%%%%%%%%%%%%%%%%
\subsection{Uniformity of the spectral gap and heat-operator estimates} \label{spectralgap}

The aim of  this short subsection is to prove that we have indeed a uniform
estimate on the spectral gap for $\Kinf$ with respect to $\eps_0$. Let $d=2$ or $d=3$.
We work with the  operator
\begin{equation*}
\Ke = v.\D_{x} -\D_x \Ve(x).\D_{v}-  \D_{v}.\left(\D_{v}+v\right)
\end{equation*}
and consider a bound from below $\kappa_0$ of the spectral gap of $W$ coming from Assumption~\ref{assumption2}.
 From~\cite[Theorem 0.1]{HN04} \Bk we know that there exist constants $C_0,C>0$   such that for all $t\geq 0$,
\begin{equation} \label{decaybe}
\norm{e^{-t\Ke}}_{\Be^\perp} \leq C_0 e^{-t\kappa_0/C}
\end{equation}
where
$$
\Be = \set{ f \in\ss'(\R^{2d})   \;\;s.t.\;\;  f \mme^{-1/2} \in L^2(\R^{2d})},
$$
and $\mme$ is the Maxwellian associated to $\Ve$  and $\Be^\perp$ is the orthogonal of $\mme$.
We then add to the potential a small perturbation
of type $\eps \Uinf$ with $\Uinf \in W^{\infty, \infty}$.  This will be applied to the  potential \Bk $U_{\infty}$ built   in the preceding subsection. Notice that   $\Uinf \in W^{\infty, \infty}$ with uniform bounds with respect to $0<\epsilon_0\ll 1$.\Bk

The corresponding modified operator is then
\begin{equation} \label{Kinfbis}
\Kinf = v.\D_{x} -\D_x \Vinf(x).\D_{v}-  \D_{v}.\left(\D_{v}+v\right),
\end{equation}
with $\Vinf = \V_e + \eps_0 \Uinf$.
The main result is then the following

\begin{prop}
There exists a small real neighbourhood $\vv$ of $0$ such that for all $t \geq 0$
$$
\norm{e^{-t\Kinf}}_{B^\perp} \leq 4C_0 e^{-t\kappa_0/(8C)}
$$
 uniformly w.r.t. $\eps_0 \in \vv$.
\end{prop}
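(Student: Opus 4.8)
The plan is to treat $\Kinf = \Ke + \eps_0 \D_x \Uinf(x).\D_v$ as a perturbation of $\Ke$ on the space $B^\perp$ and to run a Duhamel/Gronwall argument. The first point to settle is the identification of the two weighted spaces: since $\Uinf \in W^{\infty,\infty}$ with bounds uniform in $\eps_0$, the Maxwellians $\mme$ and $\mminf$ are comparable, $c^{-1}\mme \leq \mminf \leq c\,\mme$ pointwise with $c \to 1$ as $\eps_0 \to 0$; hence $B$ and $\Be$ coincide as sets with equivalent norms, the equivalence constant tending to $1$. This lets us transfer the decay estimate~\eqref{decaybe} for $e^{-t\Ke}$ on $\Be^\perp$ into a bound $\norm{e^{-t\Ke}}_{B^\perp \to B^\perp} \leq 2C_0 e^{-t\kappa_0/C}$ (say), valid once $|\eps_0|$ is small enough, at the cost of a harmless factor. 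One must also check that $B^\perp$ (the orthogonal of $\mminf$) is mapped into $\Be^\perp$-type objects up to the comparable-norm loss, or more simply work throughout with the single space $B^\perp$ and note that $e^{-t\Ke}$ preserves $\int f\,dxdv = 0$ as well (since $1$ is in the kernel of the formal adjoint flow), so restricting to $B^\perp$ is legitimate.

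Next I would write Duhamel's formula relating the two semigroups. With $R = \eps_0\, \D_x\Uinf(x).\D_v$, which is a first-order operator in $v$ with $W^{\infty,\infty}$ coefficients, one has for $f \in B^\perp$
\begin{equation*}
e^{-t\Kinf} f = e^{-t\Ke} f - \int_0^t e^{-(t-s)\Ke}\, R\, e^{-s\Kinf} f\, ds .
\end{equation*}
The key estimate is that $R$ factors through $\Lambda_v$: writing $R = \eps_0\, (\D_x\Uinf) \Lambda_v^{-1} \cdot \Lambda_v \D_v' $-type manipulation, or more directly bounding $\norm{R g}_B \leq C|\eps_0|\,\norm{\D_v g}_B \leq C|\eps_0|\,\norm{\Lambda_v g}_B$, and then using the smoothing-plus-decay bound~\eqref{dv} of Proposition~\ref{decay} in the form $\norm{e^{-(t-s)\Ke}\, R}_{B^\perp\to B^\perp} \leq C|\eps_0|\,\big(1+(t-s)^{-1/2}\big)e^{-\kappa(t-s)}$ (again using that $e^{-(t-s)\Ke}\Lambda_v$ is controlled on $B^\perp$, after the norm-comparison reduction above, and noting $R$ maps $B^1$ into $B^\perp$ by~\eqref{p2}). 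Setting $\phi(t) = e^{t\kappa_0/(2C)}\norm{e^{-t\Kinf}f}_B$ (with a slightly degraded rate to absorb the $e^{-\kappa(t-s)}$ weights), the Duhamel identity yields
\begin{equation*}
\phi(t) \leq 2C_0\norm{f}_B + C|\eps_0| \int_0^t \big(1+(t-s)^{-1/2}\big)\phi(s)\, ds .
\end{equation*}

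Finally, since $\int_0^t(1+(t-s)^{-1/2})e^{-\eta(t-s)}ds$ is bounded uniformly in $t$ for any fixed $\eta>0$, a singular Gronwall lemma (or a bootstrap on $\sup_{[0,T]}\phi$) closes the argument: for $|\eps_0|$ small enough that $C|\eps_0|$ times this integral is $\leq 1/2$, one gets $\sup_t \phi(t) \leq 4C_0\norm{f}_B$, which is exactly $\norm{e^{-t\Kinf}}_{B^\perp} \leq 4C_0 e^{-t\kappa_0/(8C)}$ after renaming constants (the $8C$ accommodating the two successive halvings of the rate). I expect the main obstacle to be the bookkeeping around the space identification — making rigorous that~\eqref{decaybe} and the $\Lambda_v$-smoothing estimates, which are stated for $\Ke$ on $\Be^\perp$ (resp. for a generic $K$ on its own $B^\perp$), transfer to $\Kinf$ on $\mminf^{1/2}L^2$ with the $\eps_0 \to 1$ comparability — rather than the Gronwall step, which is routine. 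One should also verify that the perturbative smallness condition on $|\eps_0|$ here is compatible with (can be taken smaller than) the one needed for existence of $\Uinf$ in the previous subsection, which is immediate since all constraints are open conditions near $\eps_0 = 0$.
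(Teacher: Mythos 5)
Your proposal is correct in outline but takes a genuinely different route from the paper. The paper does not perturb at the semigroup level at all: it observes that $\Vinf=\Ve+\eps_0\Uinf$ still satisfies Assumptions~\ref{assumption1} and~\ref{assumption2}, so that \cite[Theorem 0.1]{HN04} applies directly to $\Kinf$; the constants $C_0$ and $C$ in \eqref{decaybe} depend only on finitely many derivatives of the potential and hence change only by harmless factors, and the single non-trivial point is a lower bound on the spectral gap of the perturbed Witten operator $W_\infty=-\Delta_x+|\D_x\Vinf|^2/4-\Delta_x\Vinf/2$. This is obtained from the operator inequality $W_\infty\geq \tfrac12 W-\kappa_0/4$ (absorbing the cross term $\eps_0\D_x\Ve.\D_x\Uinf$ via $|\D_x\Ve.\D_x\Uinf|\leq aW+b$) and the minmax principle, which is available because $W_\infty$ is self-adjoint. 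Your Duhamel--Gronwall argument instead treats $R=\eps_0\D_x\Uinf.\D_v$ as a perturbation of $\Ke$ and uses only the conclusion \eqref{decaybe} plus the smoothing-with-decay bound \eqref{dv}; this is more robust (it would tolerate a much less regular $\Uinf$, and it avoids reopening the black box of \cite{HN04}), at the price of justifying the Duhamel formula with its integrable $(t-s)^{-1/2}$ singularity. Two small remarks. First, the inequality $\norm{Rg}_B\leq C|\eps_0|\norm{\Lambda_v g}_B$ goes the wrong way for a Gronwall inequality in $\norm{e^{-s\Kinf}f}_B$ alone; the correct reading is the one you also indicate, namely factoring $e^{-(t-s)\Ke}R=\big(e^{-(t-s)\Ke}\Lambda_v\big)\big(\Lambda_v^{-1}R\big)$, bounding the first factor by \eqref{dv} and the second by \eqref{deriv} with $\delta=0$ together with $Rg=\eps_0\D_v(\D_x\Uinf\, g)$ and \eqref{p2}. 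With that, the singular Gronwall closes as you describe and the smallness condition on $|\eps_0|$ involves only fixed constants of $\Ve$, hence is uniform. Second, the space identification is even simpler than you fear: $B^\perp$ and $\Be^\perp$ are literally the same set $\{\int f\,dxdv=0\}$, since in both weighted inner products the pairing with the Maxwellian reduces to $\int f\,dxdv$; only the norms differ, by a factor uniformly close to $1$.
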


\begin{proof}
We first  recall that in~\eqref{decaybe} the precise result of~\cite[Theorem 0.1]{HN04}  says that $C_0$ depends on a finite number of semi-norms of $\Ve$ and that
$$
C = \frac{ \min\set{1, \kappa_0}}{64(8 + 3C_e)}
$$
where $C_e = \max \set{ \sup \set{ \Hess(\Ve)^2 - (\frac{1}{4} (\D_x \Ve)^2 -\frac{1}{2} \Delta \Ve) \Id},0}$. Adding  a small perturbation $\eps_0 \Uinf$ with $\Uinf \in W^{\infty, \infty}$ does only change the constant $C$ into $2C$ and $C_0$ into $2 C_0$ and we only have to check that
$\kappa_0$ is changed into $\kappa_0/4$ uniformly in  $\eps_0$ sufficiently small.

For this we look at the spectrum of
$$
W_\infty = -\Delta_x + |\D_x \Vinf|^2/4 - \Delta_x \Vinf /2
$$
and we check that as operators in $L^2 (\R^d)$ we have
\begin{equation*}
\begin{split}
W_\infty & = -\Delta_x + |\D_x \Vinf|^2/4 - \Delta_x \Vinf /2 \\
 & \geq  -\Delta_x + |\D_x \Ve|^2/4 - \Delta_x \Ve /2 +\eps_0^2 |\D_x \Uinf|^2/4 - |\eps_0||\Delta \Uinf|/2 + \eps_0 \D_x\Ve \D_x\Uinf{/2}\\
& \geq   W - \kappa_0/8 + \eps_0 \D_x\Ve \D_x\Uinf{/2}
\end{split}
\end{equation*}
if we take $\eps_0$ sufficiently small so that $\eps_0^2 |\D_x \Uinf|^2/4 + |\eps_0||\Delta \Uinf|/2 \leq \kappa_0/8$. Now there exist constants~$a$ and $b$ such that
$$
|\D_x\Ve \D_x\Uinf| \leq a W +b
$$
since $\Ve$ has its  second order derivatives bounded,
and therefore we get for $\eps_0$ sufficiently small
\begin{equation*}
 W_\infty  \geq   \frac{1}{2} W - \kappa_0/4 .
 \end{equation*}
Since $W \geq \kappa_0$, the minmax principle then directly gives that
$$
W_\infty \geq \kappa_0/4
$$
when restricted to the orthogonal of the $0$-eigenspace.  The proof is complete.
\end{proof}

\begin{rema}
We can also notice that  the natural norm into the weighted spaces
$$
B = \set{ f \in\ss'  \;\;s.t.\;\;   f \mminf^{-1/2} \in L^2} \quad \textrm{ and } \quad
  \Be= \set{ f \in\ss'  \;\;s.t.\;\;   f \mme^{-1/2} \in L^2}
  $$
where $\mm$ is the Maxwellian associated to $\Ve$,   are equivalent with an equivalence constant bounded by~$1/2$ uniformly in $\eps_0$ small enough. This justifies the use of the norms associated to the space $B$ instead of the one associated to $\Be$ in the statement of the main theorems of this article.
\end{rema}

%%%%%%%%%%%%%%%%%%%%%%%%%%%%%%%%%%%%%%%%%%%%%%%%%%%%%%%%%%%%%%%%%%%%%%%%%%%%%
\subsection{Estimates on the Poisson term}

In the following lemma we crucially use the fact that we work in weighted Sobolev spaces instead of flat ones and that $\mminf \in \ss(\R^{2d})$ uniformly in $|\eps_0| \ll 1$, as proven in the preceding subsection. We have

\begin{lemm} \ph \label{HLSSE}
Let $\alpha \in[0,1]$ then there exists $C>0$ such that for all   $h_0 \in B^\alpha$
$$
\Big\Vert \int h_0 dv\Big\Vert_{H^\alpha_x} \leq C \norm{h_0}_{B^\alpha}.
$$
\end{lemm}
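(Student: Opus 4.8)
The plan is to reduce the statement about the weighted Sobolev norm $\|\cdot\|_{B^\alpha}$ to a statement about the ordinary (flat) Sobolev norm $\|\cdot\|_{H^\alpha_x}$ by exploiting that integration in $v$ is tested against the constant $1$, which is controlled in the weighted space by the rapid decay of $\mminf$. First I would treat the two endpoint cases $\alpha = 0$ and $\alpha = 1$ and then conclude the general case $\alpha \in (0,1)$ by interpolation (complex or real interpolation between $L^2_x$ and $H^1_x$ on the target side, and between $B^0 = B$ and $B^1$ on the source side, using the standard fact that $[B,B^1]_\alpha = B^\alpha$ since $B^\alpha$ is defined via the nonnegative self-adjoint operators $\Lambda_x,\Lambda_v$).

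For $\alpha = 0$, the claim is $\|\int h_0\,dv\|_{L^2_x} \le C\|h_0\|_B$. By Cauchy--Schwarz in $v$, pointwise in $x$,
\begin{equation*}
\Big| \int h_0(x,v)\,dv\Big|^2 = \Big| \int \frac{h_0(x,v)}{\mminf(x,v)^{1/2}}\, \mminf(x,v)^{1/2}\,dv\Big|^2 \leq \Big(\int \frac{h_0(x,v)^2}{\mminf(x,v)}\,dv\Big)\Big(\int \mminf(x,v)\,dv\Big),
\end{equation*}
and since $\mminf(x,v) = Z^{-1} e^{-v^2/2} e^{-\Vinf(x)}$ with $\Vinf \ge 0$, the factor $\int \mminf(x,v)\,dv = Z^{-1}e^{-\Vinf(x)}(2\pi)^{d/2} \le C$ is bounded; integrating the displayed inequality in $x$ gives $\|\int h_0\,dv\|_{L^2_x}^2 \le C \|h_0\|_B^2$. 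For $\alpha = 1$, I would similarly estimate the first-order derivative: differentiating under the integral sign, $\D_{x_j}\int h_0\,dv = \int \D_{x_j} h_0\,dv$, so it suffices to control $\|\int \D_{x_j} h_0\,dv\|_{L^2_x}$ by $\|h_0\|_{B^1}$. Here one writes $\D_{x_j} = (\D_{x_j} + \D_{x_j}\Vinf) - (\D_{x_j}\Vinf)$; the term $(\D_{x_j}\Vinf) h_0$ is harmless because $\Vinf \in W^{1,\infty}$ (Assumption \ref{assumption1} together with Propositions \ref{corpeintro}, \ref{corpeintro1}), giving an $L^\infty$ multiplier; the remaining term involves $(\D_x + \D_x\Vinf) h_0$, which is exactly the kind of object bounded by $\|\Lambda_x h_0\|_B \le \|h_0\|_{B^1}$ — indeed $\|(\D_x+\D_x\Vinf)f\|_B^2 + \|f\|_B^2 = \langle \Lambda_x^2 f, f\rangle = \|\Lambda_x f\|_B^2$. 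Then the same Cauchy--Schwarz-in-$v$ argument as in the $\alpha=0$ case, applied to these $v$-integrands, produces the bound in $L^2_x$, and combining with the $\alpha=0$ estimate yields control of the full $H^1_x$ norm.

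The main obstacle, and the only point requiring genuine care, is the passage from the weighted space to the flat Sobolev space at the level of the $x$-derivatives: one must not naively differentiate $\mminf^{-1/2}$ (which grows), but rather organize the computation so that only the "good" combinations $\D_x + \D_x\Vinf$ (controlled by $\Lambda_x$) and bounded multipliers ($\D_x\Vinf \in L^\infty$) appear before applying Cauchy--Schwarz against $\mminf$. Everything else — differentiation under the integral sign (justified by density of $\mathcal C_0^\infty$ and the bounds just derived), and the interpolation step — is routine, and the boundedness of $\int \mminf\,dv$ in $x$ is immediate from $\mminf \in \ss(\R^{2d})$ uniformly in $|\eps_0| \ll 1$ as established in Subsection \ref{spectralgap}.
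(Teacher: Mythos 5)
Your overall strategy --- Cauchy--Schwarz in $v$ against $\mminf$ for $\alpha=0$, the decomposition $\D_x=(\D_x+\D_x\Vinf)-\D_x\Vinf$ for $\alpha=1$, and interpolation --- is exactly the paper's. But one step fails as written: you dispose of the term $\int(\D_x\Vinf)h_0\,dv$ by asserting that $\Vinf\in W^{1,\infty}$, so that $\D_x\Vinf$ acts as an $L^\infty$ multiplier. That is not available under the standing hypotheses. Assumption~\ref{assumption1} only bounds the \emph{second} derivatives of $\Ve$ (namely $\Ve''\in W^{\infty,\infty}$), and Propositions~\ref{corpeintro} and~\ref{corpeintro1} only concern the small correction $\Uinf$. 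The external potential is confining --- the paper's own example is precisely $|\D_x \Ve(x)|\to\infty$ --- so $\D_x\Vinf$ is in general unbounded and cannot be pulled out of the $v$-integral in $L^\infty_x$.

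The correct treatment, which is what the paper does, is to keep $\D_x\Vinf$ inside the Cauchy--Schwarz in $v$ and pair it with the Maxwellian half:
\begin{equation*}
\Big|\int (\D_x\Vinf)\, h_0\,dv\Big| \le \Big(\int h_0^2\,\mminf^{-1}dv\Big)^{1/2}\Big(\int (\D_x\Vinf)^2\,\mminf\,dv\Big)^{1/2},
\end{equation*}
and then to observe that $\int(\D_x\Vinf)^2\,\mminf\,dv$ is bounded in $x$: this amounts to the boundedness of $|\D_x\Vinf|^2 e^{-\Vinf}$, which follows from $e^{-\Ve}\in\ss(\R^d)$ together with $\Ve''$ bounded and $\Uinf\in W^{\infty,\infty}$ --- i.e., the rapid decay of the Maxwellian, not the boundedness of the gradient, is what absorbs $\D_x\Vinf$. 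With this replacement your argument coincides with the paper's proof; the $\alpha=0$ case, the identity $\norm{(\D_x+\D_x\Vinf)h_0}_B^2+\norm{h_0}_B^2=\norm{\Lambda_x h_0}_B^2$, and the interpolation step are all fine.
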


\begin{proof}
We work by interpolation. Let us first consider the case $\alpha =0$. By Cauchy-Schwarz,
\begin{eqnarray*}
\Big\Vert \int h_{0}dv\Big\Vert_{L^2_x} &=&\Big\Vert \int h_{0}\mminf^{-1/2}\mminf^{1/2}dv\Big\Vert_{L^2_x} \\
 &\leq& \Big\Vert \Big(\int  h_{0}^{2}\mminf^{-1}dv\Big)^{1/2}\big(\int \mminf dv\big)^{1/2}\Big\Vert_{L^2_x}
  \leq  C_0 \norm{h_0}_B.
\end{eqnarray*}
Now we consider the case $\alpha = 1$. We write
\begin{eqnarray*}
\Big\Vert \partial _x \int h_{0}dv\Big\Vert_{L^2_x} &=& \Big\Vert \int \partial _x  h_{0}dv\Big\Vert_{L^2_x} \\
& \leq & \Big\Vert \int (\partial _x + \D_x V)  h_{0}dv\Big\Vert_{L^2_x} +
\Big\Vert \int (\D_x V)  h_{0}dv\Big\Vert_{L^2_x}  \\
 &=&\Big\Vert \int \big((\partial _x + \D_x V)h_{0}\big) \mminf^{-1/2}\mminf^{1/2}dv\Big\Vert_{L^2_x} +
 \Big\Vert \int h_{0} \mminf^{-1/2} \big( \D_x V\mminf^{1/2} \big) dv\Big\Vert_{L^2_x}  \\
 &\leq& \Big\Vert \Big(\int  \big((\partial _x + \D_x V)h_{0}\big)^{2}\mminf^{-1}dv\Big)^{1/2}\big(\int \mminf dv\big)^{1/2}\Big\Vert_{L^2_x} \\
 && \qquad \qquad \qquad + \Big\Vert \Big(\int  h_{0}^{2} \mminf^{-1}dv\Big)^{1/2}\Big(\int (\D_x V)^2 \mminf dv\Big)^{1/2}\Big\Vert_{L^2_x} \\
  &\leq &  C \Bk \Big\Vert(\partial _x + \D_x V ) h_0\Big\Vert_B + C\norm{ h_0}_B \leq C \norm{ h_0}_{B^1}
\end{eqnarray*}
where we used that $(\D_x V)^2 \mminf  \in L^\infty_x L^2_v$, and that
 \begin{equation*}
\begin{split}
\norm{(\partial _x + \D_x V) h_0}_B^2 + \norm{h_0}^2_B & = \sep{-\partial _x(\partial _x + \D_x V) h_0, h_0}_B +  \norm{h_0}^2_B \\
& = (\Lambda^2_x h_0, h_0)_B = \norm{ \Lambda_{x} h_0}_{B}^2 \leq \norm{h_0}^2_{B^1}. 
\end{split}
\end{equation*}
\Bk
This gives the result for $\alpha=1$. The complete result follows by interpolation.
\end{proof}

  \begin{lemm}\ph\label{lem22}
 Assume that $d=2$ or $d=3$. Let $h_{0}\in B$ and  denote by
 \begin{equation*}
 E_{0}(x)=\frac{x}{|x|^{d}}\star \int h_{0}(x,v)dv.
 \end{equation*}
 \begin{enumerate}[(i)]
  \item Case $d=2$.
  For all   $ 0<\eps \leq 1/2$ there exists $C>0$ so that
 \begin{equation}\label{claim2}
\|E_{0}\|_{L^{\infty}(\R^{2})} \leq C \| h_{0}\|_{B^{\eps}}.
\end{equation}
 \item Case $d=3$.  For all     $ 0<\eps \leq 1/2$ there exists $C>0$ so that
\begin{equation}\label{claim}
\|E_{0}\|_{L^{\infty}(\R^{3})} \leq C \| h_{0}\|_{B^{1/2+\eps}}.
\end{equation}
 \end{enumerate}
\end{lemm}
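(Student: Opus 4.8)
The plan is to estimate the electric field $E_0$ generated by the density $\rho_0(x)=\int h_0(x,v)\,dv$ by splitting the Riesz-type kernel $x/|x|^d$ into a singular near part and a decaying far part, and to control each piece using $L^p$ information on $\rho_0$. The bridge between the weighted hypothesis $h_0\in B^{\eps}$ (or $B^{1/2+\eps}$) and flat Sobolev/Lebesgue norms of $\rho_0$ is Lemma~\ref{HLSSE}, which gives $\|\rho_0\|_{H^{\eps}_x}\le C\|h_0\|_{B^{\eps}}$ for $\eps\in[0,1]$; by Sobolev embedding this yields $\rho_0\in L^p(\R^d)$ for a range of $p$ controlled by $\eps$ and $d$. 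Concretely, in $d=2$ the embedding $H^{\eps}(\R^2)\hookrightarrow L^p(\R^2)$ holds for $2\le p<2/(1-\eps)$, so for any $\eps\le 1/2$ we obtain $\rho_0\in L^1\cap L^p$ for some $p>2$; in $d=3$ the embedding $H^{1/2+\eps}(\R^3)\hookrightarrow L^p(\R^3)$ gives $\rho_0\in L^p$ for some $p>3$, and combined with $\rho_0\in L^1$ (from the $L^2$ control of the $L^1_v$-integral, which is finite since $\mminf\in\ss$) we again have $\rho_0\in L^1\cap L^p$ with $p>d$.

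The core estimate is then the following: if $\rho_0\in L^1(\R^d)\cap L^p(\R^d)$ with $p>d$, then $\bigl\||x|^{-(d-1)}\star\rho_0\bigr\|_{L^\infty}\le C(\|\rho_0\|_{L^1}+\|\rho_0\|_{L^p})$. To see this, fix $x$ and write
\begin{equation*}
\Bigl|\int \frac{x-y}{|x-y|^d}\,\rho_0(y)\,dy\Bigr|\le \int_{|x-y|\le 1}\frac{|\rho_0(y)|}{|x-y|^{d-1}}\,dy+\int_{|x-y|> 1}\frac{|\rho_0(y)|}{|x-y|^{d-1}}\,dy.
\end{equation*}
The far integral is bounded by $\|\rho_0\|_{L^1}$ since the kernel is bounded by $1$ there. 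For the near integral, by Hölder with exponents $p$ and $p'=p/(p-1)$ it is at most $\|\rho_0\|_{L^p}\bigl(\int_{|z|\le 1}|z|^{-(d-1)p'}\,dz\bigr)^{1/p'}$, and the last integral converges precisely because $(d-1)p'<d$, which is equivalent to $p>d$. This gives \eqref{claim2} and \eqref{claim} after noting that $\|\rho_0\|_{L^1}\le C\|h_0\|_B\le C\|h_0\|_{B^{\eps}}$ and $\|\rho_0\|_{L^p}\le C\|\rho_0\|_{H^{s}_x}\le C\|h_0\|_{B^{s}}$ with $s=\eps$ when $d=2$ and $s=1/2+\eps$ when $d=3$.

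The only genuinely delicate point is the bookkeeping of exponents to ensure $p>d$ is attainable within the allowed range of $\eps$: in $d=3$ one needs a Sobolev index $s>1/2$ to push $L^p$ past $p=3$, which is exactly why the hypothesis there is $B^{1/2+\eps}$ rather than $B^{\eps}$; in $d=2$ the critical index is $s=0$ so any positive $\eps$ suffices, but the endpoint $p=\infty$ (i.e. $\eps=1$) is not needed and indeed $\eps\le 1/2$ is more than enough. A minor subtlety is that $\rho_0\in L^1$ uses $\mminf^{1/2}\in L^2_v$ uniformly in $x$, i.e. $\mminf\in\ss(\R^{2d})$ uniformly in $|\eps_0|\ll1$, as recorded earlier; this is where the weighted framework is used. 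Everything else is a routine Hölder and Sobolev embedding argument, and interpolation is not even required here since we only need one value of $p$ on each side.
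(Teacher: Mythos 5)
Your proof is correct, and it shares the paper's key structural step: both arguments funnel everything through Lemma~\ref{HLSSE}, which converts the weighted norm $\|h_0\|_{B^{s}}$ into the flat Sobolev norm $\|\int h_0\,dv\|_{H^s_x}$ with $s=\eps$ ($d=2$) or $s=1/2+\eps$ ($d=3$). Where you diverge is in the convolution estimate itself. The paper stays entirely on the Fourier/fractional-integration side: it applies the Hardy--Littlewood--Sobolev inequality \eqref{HLS} to the kernel $x/|x|^d$ together with Sobolev embeddings of the form $H^{1/2+\eps}(\R^3)\hookrightarrow W^{\eps,3}(\R^3)$, so that the whole bound is a single chain $\|E_0\|_{L^\infty}\lesssim\|L_x^{\eps}\rho_0\|_{L^3}\lesssim\|\rho_0\|_{H^{1/2+\eps}}$ requiring no $L^1$ information on $\rho_0$. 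You instead split the kernel into its near part (controlled by H\"older against $\rho_0\in L^p$ with $p>d$, the exponent bookkeeping $(d-1)p'<d$ being exactly right) and its far part (controlled by $\|\rho_0\|_{L^1}$), and you obtain the needed $L^p$ bound from the same Sobolev embedding applied after Lemma~\ref{HLSSE}. Your route is more elementary -- no HLS, only H\"older and a radial integral -- but it consumes one extra ingredient, namely $\rho_0\in L^1$ with $\|\rho_0\|_{L^1}\le\|h_0\|_B$; you correctly note that this comes for free from Cauchy--Schwarz in $v$ against $\mminf^{1/2}$, i.e.\ precisely from the weighted framework. The threshold analysis ($s>1/2$ needed in $d=3$ to push $p$ past $3$, any $s>0$ sufficing in $d=2$) matches the hypotheses of the lemma, so there is no gap.
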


\begin{proof} Let us first recall the Hardy-Littlewood-Sobolev inequality (see e.g.~\cite{Ler14})
which will   be \Bk useful in the sequel. For all $1<p,q<+\infty$ such that $\frac1q-\frac1p+\frac1d=0$
\begin{equation}\label{HLS}
\big\Vert \frac{x}{|x|^{d}}\star f\big\Vert_{L^q(\R^d)}\leq C \Vert  f\Vert_{L^p(\R^d)}.
\end{equation}

We  prove $(ii)$. We consider the Fourier multiplier $L_x = (1-\Delta_x)^{1/2}$.
Then,  by Hardy-Littlewood-Sobolev and the Sobolev embeddings, for any $\eps>0$
\begin{equation*}
\|E_{0}\|_{L^{\infty}(\R^{3})} \leq C \big\|L^{\eps}_{x}\int h_{0}dv\big\|_{L^{3}(\R^{3})}\leq C \big\|L^{1/2+\eps}_{x}\int h_{0}dv\big\|_{L^{2}(\R^{3})} \leq \big\|\int h_{0}dv\big\|_{H^{1/2+\eps}(\R^{3})}.
\end{equation*}
Using Lemma~\ref{HLSSE} with $\alpha = 1/2+\eps$ we get
~\eqref{claim}.

The proof  of $(i)$ is analogous with $L_{x}^{\eps}$ replaced with $L_{x}^{1/2+\eps}$.
\end{proof}

  \begin{coro}\ph\label{lem23}
 Assume that $d=2$ or $d=3$. Let $f_{0}\in B^{\perp}$ and  denote by
 \begin{equation*}
 E_{0}(t,x)=\frac{x}{|x|^{d}}\star \int\e^{-tK}f_{0}(x,v)dv.
 \end{equation*}
  \begin{enumerate}[(i)]
  \item Case $d=2$. For all   $ 0<\eps \leq 1/2$ there exists $C>0$ so that for all $t>0$
 \begin{equation} \label{e00}
 \|E_{0}(t,.)\|_{L^{\infty}(\R^{2})}\leq C (1+t^{-3\eps/2})e^{- \kappa   t}\|f_{0}\|_{B}.
 \end{equation}
   \item Case $d=3$.  For all   $ 0<\eps \leq 1/2$ there exists $C>0$ so that for all $t>0$
 \begin{equation} \label{e000}
 \|E_{0}(t,.)\|_{L^{\infty}(\R^{3})}\leq C e^{- \kappa   t}\|f_{0}\|_{B^{1/2+\eps}}.
 \end{equation}
   \end{enumerate}
\end{coro}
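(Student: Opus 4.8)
The plan is to combine the semi-group decay estimates from Section~\ref{sect2} with the static Poisson estimates from Lemma~\ref{lem22}, exploiting that $e^{-tK}$ preserves $B^{\perp}$ and decays exponentially there. Write $h(t) = e^{-tK}f_0$, so $E_0(t,\cdot)$ is exactly the field produced by $h(t)$ in the sense of Lemma~\ref{lem22}; the point is simply to estimate $\|h(t)\|_{B^{\eps}}$ (case $d=2$) or $\|h(t)\|_{B^{1/2+\eps}}$ (case $d=3$) with the correct time behaviour and the correct weight $\|f_0\|_{B}$ resp. $\|f_0\|_{B^{1/2+\eps}}$.

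For the case $d=3$, apply Lemma~\ref{lem22}(ii) to $h_0 = h(t)$:
\begin{equation*}
\|E_0(t,\cdot)\|_{L^{\infty}(\R^{3})} \leq C\|h(t)\|_{B^{1/2+\eps}}
= C\|\Lambda^{1/2+\eps} e^{-tK} f_0\|_{B}.
\end{equation*}
Now since $f_0 \in B^{\perp}$, write $\Lambda^{1/2+\eps} e^{-tK} f_0 = \big(\Lambda^{1/2+\eps} e^{-tK} \Lambda^{-(1/2+\eps)}\big)\, \Lambda^{1/2+\eps} f_0$ and invoke the conjugation estimate~\eqref{conj2} of Proposition~\ref{gammaplusmoins} with $\gamma = 1/2+\eps \in [0,2]$, which gives
\begin{equation*}
\|\Lambda^{1/2+\eps} e^{-tK} f_0\|_{B} \leq C e^{-\kappa t}\|\Lambda^{1/2+\eps} f_0\|_{B}
= C e^{-\kappa t}\|f_0\|_{B^{1/2+\eps}},
\end{equation*}
which is precisely~\eqref{e000}. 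Here I used $1/2 + \eps \le 1 \le 2$ for $\eps \le 1/2$, so the range hypothesis is met.

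For the case $d=2$, apply Lemma~\ref{lem22}(i) to $h_0 = h(t)$, obtaining $\|E_0(t,\cdot)\|_{L^\infty(\R^2)} \le C\|h(t)\|_{B^{\eps}} = C\|\Lambda^{\eps}e^{-tK}f_0\|_B$. Since now no extra derivative on $f_0$ is allowed (the target weight is just $\|f_0\|_B$), I cannot use the conjugation trick; instead I use the genuine regularizing decay on $B^{\perp}$ from Proposition~\ref{decay}. Bounding $\Lambda^{\eps} \le \Lambda$ and splitting $\Lambda^{\eps} \lesssim \Lambda_x^{\eps} + \Lambda_v^{\eps}$ (via $1 \le \Lambda_x^2, \Lambda_v^2 \le \Lambda^2$), apply~\eqref{dx} with $\a=\eps$ and~\eqref{dv} with $\b=\eps$:
\begin{equation*}
\|\Lambda_x^{\eps} e^{-tK} f_0\|_{B} \leq C_\eps(1+t^{-3\eps/2})e^{-\kappa t}\|f_0\|_{B},\qquad
\|\Lambda_v^{\eps} e^{-tK} f_0\|_{B} \leq C_\eps(1+t^{-\eps/2})e^{-\kappa t}\|f_0\|_{B}.
\end{equation*}
Since $t^{-3\eps/2}$ dominates $t^{-\eps/2}$ near $t=0$, adding these yields $\|E_0(t,\cdot)\|_{L^\infty} \le C(1+t^{-3\eps/2})e^{-\kappa t}\|f_0\|_B$, which is~\eqref{e00}. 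The only mild subtlety — the ``main obstacle'' such as it is — is bookkeeping: making sure in each case that one uses the estimate with the right target space ($B^{\eps}$ vs. $B^{1/2+\eps}$, dictated by the dimension-dependent Hardy--Littlewood--Sobolev exponent in Lemma~\ref{lem22}) and the matching semi-group bound (the lossless conjugation~\eqref{conj2} when an extra $1/2$ derivative is available in $d=3$, versus the regularizing but singular-in-$t$ bound~\eqref{dx}--\eqref{dv} when it is not, in $d=2$), and that the singular factor $t^{-3\eps/2}$ is the binding one.
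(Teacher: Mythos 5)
Your proof is correct and follows essentially the same route as the paper: apply Lemma~\ref{lem22} to $h_0=e^{-tK}f_0$, then use the regularizing decay \eqref{dx} on $B^{\perp}$ in dimension $2$ and the conjugation estimate \eqref{conj2} in dimension $3$. The only cosmetic difference is that in the $d=2$ case the paper keeps only the $\Lambda_x^{\eps}$ contribution (which is all that Lemma~\ref{HLSSE} actually requires), whereas you also bound the harmless $\Lambda_v^{\eps}$ piece.
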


\begin{proof} $(i)$. We apply the result of Lemma~\ref{lem22} to the  case $h_{0}=\e^{-tK}f_{0}$ for some $f_{0}\in B^{\perp}$, then
\begin{equation}\label{star}
\|E_{0}(t,.)\|_{L^{\infty}(\R^{2})} \leq C\|\L^{\eps}_{x} \e^{-tK} f_{0}\|_{B^{}}.
\end{equation}
Thus  estimate~\eqref{dx} together with~\eqref{star} implies
\begin{equation*}
\|E_{0}(t,.)\|_{L^{\infty}(\R^{2})} \leq C (1+t^{-3\eps/2})e^{-\kappa t}\|f_{0}\|_{B^{}},
\end{equation*}
which was to prove.

 $(ii)$. By~\eqref{claim} and~\eqref{conj}, we obtain
\begin{equation*}
\|E_{0}(t,.)\|_{L^{\infty}(\R^{3})} \leq C\|\L^{1/2+\eps} \e^{-tK} \L^{-1/2-\eps} \|_{B^{\perp}\to B^{\perp}}\| \L^{1/2+\eps} f_{0}\|_{B^{}}\leq Ce^{-\kappa t} \|   f_{0}\|_{B^{1/2+\eps}},
\end{equation*}
which was the claim.
\end{proof}

\subsection{Integral estimates} In this subsection we give  a technical result.
 \begin{lemm} \ph Let $\gamma_{1},\gamma_{2}, c >0$ and assume that $\gamma_{1}\leq 1$. Then there exists $C>0$ so that for all~$t>0$
 \begin{equation} \label{Int}
 \int_{0}^{t}\big(s^{-1+\gamma_{1}}+1\big)\big((t-s)^{-1+\gamma_{2}}+1\big)\e^{-c (t-s)}ds\leq  \left\{
  \begin{aligned}
   & C\big(t^{-1+\gamma_{1}+\gamma_{2}}+1\big)\quad& \text{for}\quad t\leq 1, \\
   & C\quad &\text{for}\quad t\geq 1.
    \end{aligned}
    \right.
\end{equation}
\end{lemm}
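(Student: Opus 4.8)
The plan is to split the integral at the midpoint $s = t/2$ and treat the two halves separately, exploiting that in each half one of the two singular factors is bounded. Write
\[
I(t) = \int_0^t \big(s^{-1+\gamma_1}+1\big)\big((t-s)^{-1+\gamma_2}+1\big)e^{-c(t-s)}\,ds = I_1(t) + I_2(t),
\]
where $I_1$ is the integral over $[0,t/2]$ and $I_2$ the integral over $[t/2,t]$.

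For $I_1$, on $[0,t/2]$ we have $t-s \geq t/2$, so $(t-s)^{-1+\gamma_2}+1 \leq C(t^{-1+\gamma_2}+1)$ and $e^{-c(t-s)} \leq e^{-ct/2} \leq 1$; pulling these out, $I_1(t) \leq C(t^{-1+\gamma_2}+1)\int_0^{t/2}(s^{-1+\gamma_1}+1)\,ds$. Since $\gamma_1 > 0$ the inner integral equals $\gamma_1^{-1}(t/2)^{\gamma_1} + t/2 \leq C(t^{\gamma_1}+t)$, so $I_1(t) \leq C(t^{-1+\gamma_2}+1)(t^{\gamma_1}+t)$. Expanding, this is bounded by $C(t^{-1+\gamma_1+\gamma_2} + t^{\gamma_2} + t^{\gamma_1} + t)$, and using $\gamma_1 \leq 1$ one checks this is $\leq C(t^{-1+\gamma_1+\gamma_2}+1)$ for $t \leq 1$ (the dominant term near $0$) and $\leq C$ for $t$ bounded away from $0$ once we absorb into the exponential — more precisely, for $t \geq 1$ we instead keep the factor $e^{-ct/2}$ and note $I_1(t) \leq Ce^{-ct/2}(t^{-1+\gamma_2}+1)(t^{\gamma_1}+t) \leq C$ since the exponential beats any polynomial.

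For $I_2$, substitute $u = t - s \in [0,t/2]$, so $s = t-u \geq t/2$ and $s^{-1+\gamma_1}+1 \leq C(t^{-1+\gamma_1}+1)$ (here $\gamma_1 \leq 1$ guarantees $-1+\gamma_1 \leq 0$ so the bound $s \geq t/2$ controls it); pulling this out, $I_2(t) \leq C(t^{-1+\gamma_1}+1)\int_0^{t/2}(u^{-1+\gamma_2}+1)e^{-cu}\,du$. For $t \leq 1$ bound $e^{-cu} \leq 1$ and the inner integral by $C(t^{\gamma_2}+t)$, giving the same type of estimate as for $I_1$; for $t \geq 1$ bound the inner integral by the convergent integral $\int_0^\infty(u^{-1+\gamma_2}+1)e^{-cu}\,du < \infty$ (finite since $\gamma_2 > 0$ at the origin and exponential decay at infinity), which gives $I_2(t) \leq C(t^{-1+\gamma_1}+1) \leq C$ for $t \geq 1$. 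Combining the two pieces and the two regimes yields \eqref{Int}.

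The only mildly delicate point is the bookkeeping of exponents in the regime $t \leq 1$: one must verify that every term produced (namely $t^{-1+\gamma_1+\gamma_2}$, $t^{\gamma_1}$, $t^{\gamma_2}$, $t$, and $1$) is $\leq C(t^{-1+\gamma_1+\gamma_2}+1)$ on $(0,1]$, which follows because $-1+\gamma_1+\gamma_2$ is the smallest of the exponents appearing (using $\gamma_1,\gamma_2 > 0$) so $t^{-1+\gamma_1+\gamma_2}$ dominates all the other powers as $t \to 0^+$ while $1$ dominates them for $t$ near $1$. There is no real analytic obstacle here; the statement is a routine splitting-and-estimating exercise, and the midpoint split is the standard device.
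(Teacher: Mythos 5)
Your proof is correct. It takes a slightly different (though equally elementary) route from the paper: you split at the midpoint $s=t/2$ in both regimes, so that on each half one of the two factors is bounded by a power of $t$ and can be pulled out of the integral. The paper instead handles $t\leq 1$ without any splitting, by expanding the product and evaluating the cross term exactly via the change of variables $s=t\sigma$ (the Beta-function identity $\int_0^t s^{-1+\gamma_1}(t-s)^{-1+\gamma_2}\,ds=C_{\gamma_1,\gamma_2}t^{-1+\gamma_1+\gamma_2}$), and for $t\geq 1$ it splits at the fixed point $s=1$ rather than at $t/2$, using $s^{-1+\gamma_1}\leq 1$ on $[1,t]$ together with the convergent integral $\int_0^\infty (u^{-1+\gamma_2}+1)e^{-cu}\,du$. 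Your midpoint split avoids the exact Beta identity and treats both regimes uniformly; the paper's version yields the exponent $-1+\gamma_1+\gamma_2$ as an identity rather than an upper bound. One small imprecision in your closing remark: $-1+\gamma_1+\gamma_2$ need not be the smallest exponent appearing (take $\gamma_2>1$, e.g.\ $\gamma_1=1/2$, $\gamma_2=2$, where $-1+\gamma_1+\gamma_2=3/2>\gamma_1$); the bookkeeping still goes through because every other exponent is nonnegative, so those powers are simply $\leq 1$ on $(0,1]$ and are absorbed by the constant. This does not affect the validity of the argument.
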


\begin{proof} The proof is elementary: we  expand \Bk the r.h.s. of~\eqref{Int} and estimate each piece. Let $t\leq 1$, then
\begin{equation*}
 \int_{0}^{t}\big(s^{-1+\gamma_{1}}+1\big)\big((t-s)^{-1+\gamma_{2}}+1\big)\e^{-c (t-s)}ds \leq  \int_{0}^{t}\big(s^{-1+\gamma_{1}}+1\big)\big((t-s)^{-1+\gamma_{2}}+1\big) ds,
\end{equation*}
Firstly,
\begin{equation*}
 \int_{0}^{t} s^{-1+\gamma_{1}} (t-s)^{-1+\gamma_{2}} ds=C_{\gamma_{1},\gamma_{2}}t^{-1+\gamma_{1}+\gamma_{2}},
\end{equation*}
by a simple change of variables. Then for $t\leq 1$
\begin{equation*}
\int_{0}^{t} s^{-1+\gamma_{1}} ds +\int_{0}^{t} (t-s)^{-1+\gamma_{2}} ds \leq C,
\end{equation*}
and this yields the result. Now we assume that $t\ge1$. Then on the one hand
 \begin{eqnarray*}
 \int_{0}^{1}\big(s^{-1+\gamma_{1}}+1\big)\big((t-s)^{-1+\gamma_{2}}+1\big)\e^{-c (t-s)}ds &\leq&  C \e^{-c t}\int_{0}^{1}\big(s^{-1+\gamma_{1}}+1\big)\big((t-s)^{-1+\gamma_{2}}+1\big) ds\\
  &\leq&  C,
 \end{eqnarray*}
and on the other hand, since $\gamma_{1}\leq1$
 \begin{eqnarray*}
 \int_{1}^{t}\big(s^{-1+\gamma_{1}}+1\big)\big((t-s)^{-1+\gamma_{2}}+1\big)\e^{-c (t-s)}ds &\leq&  C \int_{0}^{t} \big((t-s)^{-1+\gamma_{2}}+1\big)\e^{-c (t-s)} ds\\
  &\leq&  C,
 \end{eqnarray*}
which completes the proof.
\end{proof}

%%%%%%%%%%%%%%%%%%%%%%%%%%%%%%%%%%%%%%%%%
%%%%%%%%%%%%%%%%%%%%%%%%%%%%%%%%%%%%%%%%%%%

\subsection{Low regularity heat estimates}\label{sect36}

In this subsection we show how some of the previous results on the Fokker-Planck operator with potential satisfying Assumption~\ref{assumption1} remain valid when the potential
is of type
$$
V = \Ve +\eps_0  U_0
$$
where $\Ve$ satisfies Assumption~\ref{assumption1},  $U_0 \in W^{2,\infty}$ and  $\vert \eps_0\vert\leq 1 $.  This will be applied in Section~\ref{sect5} when the study for short time will be done.

In the following we denote by
\begin{equation*}
K = v.\D_{x} -\D_x \Ve(x).\D_{v}-  \D_{v}.\left(\D_{v}+v\right)
\end{equation*}
and
$$
K_0 = K -  \eps_0  \D_x U_0 (x)\Bk.\D_v.
$$
Note that the Hilbert spaces of type $B$ defined in~\eqref{defb2}  with either $\mminf$ (defined in~\eqref{defmme}) or $\mme$ (when $\Ve +\eps_0 \Uinf$ is replaced there by $\Ve$ only) or even $\mm_0$ (when $\Ve +\eps_0 \Uinf$   is replaced there by $\Ve +\eps_0 U_0$)  are all equal with equivalent norms uniformly in $0 \leq \eps_0 \leq 1$ and depending only on the norm sup of $U_0$ or $\Uinf$.\medskip

 We will need the following result

\begin{lemm}\ph The domains of $K$ and $K_0$ coincide, they are both maximal accretive with $\mm^{1/2} \ss$ as a core.
\end{lemm}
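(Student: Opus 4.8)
The statement asserts that the Fokker--Planck operators $K$ (with potential $\Ve$) and $K_0 = K - \eps_0 \D_x U_0 \cdot \D_v$ (with potential $\Ve + \eps_0 U_0$, $U_0 \in W^{2,\infty}$) have the same domain in $B$, are both maximal accretive, and share $\mm^{1/2}\ss$ as a common core. The first natural move is to recall (from the results already quoted in the excerpt, e.g.\ \cite[Theorem~5.5]{HelN04} and the discussion around \eqref{maxi}) that $K$ itself is maximal accretive with $\mm^{1/2}\ss$ as a core; what needs work is transferring these properties to $K_0$, which differs from $K$ by the first-order (in $v$) perturbation $P \defegal -\eps_0 \D_x U_0(x) \cdot \D_v$. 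So the whole statement reduces to a \emph{relatively bounded perturbation} argument: one shows that $P$ is $K$-bounded with relative bound $0$ (or at least $<1$), and then invokes a perturbation theorem for accretive generators (a Kato--Rellich type statement for maximal accretive operators, or a Hille--Yosida/Lumer--Phillips argument).

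\textbf{Key steps.} First, conjugate everything by $\mm^{1/2}$ to work with the selfadjoint-plus-skew decomposition: after conjugation, $K$ becomes (up to the usual computation) $b^*b + X_0$ where $b = \D_v + v/2$ (so $b^*b$ is essentially a harmonic oscillator in $v$) and $X_0 = v\cdot\D_x - \D_x\Ve\cdot\D_v$ is skew-adjoint; the perturbing term $P$ becomes, after conjugation, $-\eps_0\D_x U_0(x)\cdot(\D_v + v/2) + \tfrac{\eps_0}{2}\D_x U_0(x)\cdot\D_v$ (or a similarly harmless reshuffling). The point is that $P$, conjugated, is controlled by $\D_v$ and by $v$ times a bounded function, hence by $b$ and $b^*$. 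Second, establish the relative bound: using $\|b u\|_B^2 + \|b^* u \|_B^2 \lesssim \Re\la K u, u\ra + \|u\|_B^2$ (the basic accretivity/coercivity estimate for $K$), together with $U_0 \in W^{1,\infty}$ so that $\D_x U_0$ is bounded, one gets $\|Pu\|_B \leq \eps_0 C (\|b u\|_B + \|b^* u\|_B + \|u\|_B) \leq \eps_0 C' (\|Ku\|_B + \|u\|_B)$ for $u$ in the domain of $K$; since we are free to assume $|\eps_0|$ small (or, for general $|\eps_0|\le 1$, split the harmonic-oscillator control to absorb a constant times $\|Ku\|$ with a small coefficient plus a large multiple of $\|u\|$), the relative bound can be taken $<1$. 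Third, apply the perturbation theorem: a maximal accretive operator perturbed by a relatively bounded operator of relative bound $<1$, which is moreover itself accretive or at least does not spoil accretivity ($\Re\la Pu,u\ra$ is controlled), remains maximal accretive with the same domain; equality of domains is then automatic from $D(K_0) = D(K)$ in the relative-bound estimate (the graph norms are equivalent). Finally, the core statement: since $\mm^{1/2}\ss$ is a core for $K$ and the $K$- and $K_0$-graph norms are equivalent on $D(K)=D(K_0)$, it is a core for $K_0$ as well.

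\textbf{Main obstacle.} The delicate point is getting the relative bound genuinely below $1$ \emph{uniformly} for all $|\eps_0| \le 1$ rather than merely for $\eps_0$ small --- the phrase ``$|\eps_0|\le 1$'' in the lemma suggests the authors want the full range. The trick is that the ``bad'' direction of $P$ is $\D_x U_0(x)\cdot \D_v$ with $\D_x U_0$ bounded, and $\D_v u$ can be estimated by $\|bu\|_B + \tfrac12\|v u\|_B$ while $\|vu\|_B$ and $\|bu\|_B$ together are controlled by the full harmonic-oscillator norm $\|\Lambda_v u\|_B$, which in turn satisfies, for any $\delta>0$, $\|\Lambda_v^{} u\|_B^2 \le \delta\,\Re\la Ku,u\ra + C_\delta \|u\|_B^2$-type estimates are \emph{not} available (only the square-root power is gained from $\Re\la Ku,u\ra$); so one instead uses interpolation $\|\D_v u\|_B \le \eta \|Ku\|_B + C_\eta\|u\|_B$ coming from $\|\Lambda_v^2 u\|_B \lesssim \|Ku\|_B + \|u\|_B$ (the full elliptic gain in $v$, cf.\ \cite[Proposition~3.1]{her06} reinterpreted), which does give relative bound $0$ for the $v$-regularity part of $P$, and treats the $vu$ part as bounded relative to $\|u\|_B$ only after noting $U_0$ multiplies it --- this needs $\D_x U_0$ bounded, which is exactly hypothesis $U_0 \in W^{1,\infty}$. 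Making this hierarchy of estimates airtight, and checking that the $W^{2,\infty}$ (rather than $W^{\infty,\infty}$) regularity of $U_0$ suffices for the commutator terms that appear when one wants $\mm^{1/2}\ss$ to be a core, is where the real care is required; everything else is soft functional analysis.
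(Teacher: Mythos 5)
Your proposal is correct and follows essentially the same route as the paper: the authors also reduce everything to showing that the perturbation is $K$-bounded with relative bound $0$, via $\norm{\D_x U_0.\D_v f}_B \leq C_0\norm{\D_v f}_B \leq \eta\norm{Kf}_B + C_\eta\norm{f}_B$, and then invoke a standard perturbation lemma (Engel--Nagel, Ch.~III, Lemma~2.4) to transfer the domain, maximal accretivity and the core. The only difference is length: your discussion of the ``main obstacle'' is more anguished than necessary, since the relative bound $0$ already gives uniformity in $\eps_0\in[0,1]$ and only $\D_x U_0\in L^\infty$ is used at this stage.
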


\begin{proof}
This is clear for $K$ as already noticed and used (see~\cite{HN04}). The difficulty is that $K_0$ has only $W^{1, \infty}$ coefficients.  There exists $C_{0}>0$ such that  $\|\partial_{x}U_{0}\|_{L^{\infty}}\leq C_{0}$, and then for any\;$\eta>0$, there exists $C_{\eta}>0$ such that
$$
\norm{ \D_x U_0.\D_v f }_{B} \leq  C_0 \norm{\D_v f }_{B} \leq  \eta\norm{K f}_{B} + C_\eta \norm{f}_{B},
$$
which  directly implies that the domains are the same, see e.g. \cite[ Chapter III, Lemma 2.4]{EN00}.   The fact that $\mm^{1/2} \ss$ is a core is also a direct consequence of this inequality.
\end{proof}

We now prove that some results from Section~\ref{sect2} about semigroup estimates remain true
for the  new operator $K_0$ with non-smooth coefficients.\medskip

We begin with a general Proposition

\begin{prop}\ph  \label{lowregularity}
Let us consider the operator $K_0$ with potential $\Ve + \eps_0 U_0$. Then there exists $C_0>0$ such that the  following is true uniformly in $\eps_0 \in [0,1]$ and $t\in (0,1]$
\begin{enumerate}[(i)]
\item $\forall \gamma \in [0,1]$, \qquad$\norm{ \Lambda^\gamma e^{-tK_0} \Lambda^{-\gamma}}_{B\to B} \leq C_0$,\vspace{5pt}
\item $\forall \beta \in [0,1]$, \qquad $\norm{ \Lambda_v^\beta e^{-tK_0} }_{B\to B} \leq C_0 t^{-\beta/2}$,\vspace{5pt}
\item $\forall \alpha \in [0,1]$, \qquad $\norm{ \Lambda_x^\alpha e^{-tK_0} }_{B\to B} \leq C_0 t^{-3\alpha/2}$,\vspace{5pt}
\item $\forall a   \in [0,2]$ and $f \in B^{a}$, \qquad $\norm{ (e^{-tK_0}-1)f}_{B} \leq C_0t^a \norm{f}_{B^a}.$
    \end{enumerate}
    \end{prop}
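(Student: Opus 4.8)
\textbf{Proof strategy for Proposition~\ref{lowregularity}.}

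The plan is to treat $K_0 = K - \eps_0 \D_x U_0(x).\D_v$ as a perturbation of the smooth operator $K$ and to propagate the four estimates of Proposition~\ref{prop21}, Proposition~\ref{gammaplusmoins} and Lemma~\ref{leminterpol2}, which are all known for $K$, through a Duhamel (variation-of-constants) argument. Write
\[
e^{-tK_0} = e^{-tK} + \eps_0 \int_0^t e^{-(t-s)K}\,\D_x U_0.\D_v\, e^{-sK_0}\,ds,
\]
which is justified since both operators are maximal accretive with the same domain (by the preceding Lemma) and $\D_x U_0 \in W^{1,\infty}$. The crucial structural point is that the perturbation only involves $\D_v$ in the velocity variable and multiplication by the bounded function $\D_x U_0$ in $x$; since $\D_v = -(v+\D_v) + $ (lower order) and $v+\D_v = \Lambda_v\cdot(\Lambda_v^{-1}(v+\D_v))$ with $\Lambda_v^{-1}(v+\D_v)$ bounded on $B$ (cf.\ the argument around~\eqref{p2} and the technical lemma~\eqref{deriv}), we have the gain
\[
\|\D_x U_0.\D_v\, g\|_B \leq C\,\|\Lambda_v g\|_B \leq C\,\|\Lambda_v^{1-\beta}\, \Lambda_v^{\beta} g\|_B,
\]
and more importantly $\|\D_x U_0.\D_v\,e^{-sK}\|_{B\to B} \leq C(1+s^{-1/2})$ by~\eqref{stvdecay}. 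This integrable-in-$s$ singularity is what makes the Duhamel iteration converge.

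The key steps, in order: \emph{(a)} First establish (ii) for $\beta = 1$, i.e.\ $\|\Lambda_v e^{-tK_0}\|_{B\to B} \leq C t^{-1/2}$ on $(0,1]$. Insert the Duhamel formula, bound $\|\Lambda_v e^{-(t-s)K}\,\D_x U_0\,\|\cdot\|\D_v e^{-sK_0}\|$ — here I would set up a fixed-point/Gronwall argument in the quantity $\Phi(t) := \sup_{0<s\le t}\sqrt{s}\,\|\Lambda_v e^{-sK_0}\|_{B\to B}$, using that $\int_0^t (t-s)^{-1/2}(1+s^{-1/2})\,ds \leq C$ on $(0,1]$ (a Beta-function estimate, or a special case of~\eqref{Int}). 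Then (ii) for general $\beta\in[0,1]$ follows by interpolation with the trivial bound $\|e^{-tK_0}\|_{B\to B}\leq e^{Ct}$ (maximal accretivity up to the bounded perturbation), exactly as in the proof of Proposition~\ref{prop21}. \emph{(b)} Establish (iii) for $\alpha=1$ by the same scheme, now using $\|\Lambda_x e^{-(t-s)K}\|_{B\to B}\leq C(t-s)^{-3/2}$; the subtlety is that $(t-s)^{-3/2}$ is \emph{not} integrable, so one must instead split $\Lambda_x e^{-(t-s)K} = (\Lambda_x e^{-(t-s)K/2})(e^{-(t-s)K/2})$ and keep the $\D_v$-gain $(1+s^{-1/2})$ on the $e^{-sK_0}$ side after factoring, or equivalently bound $\|\Lambda_x e^{-(t-s)K}\,\D_x U_0.\D_v\, e^{-sK_0}\|$ by distributing as $\|\Lambda_x e^{-(t-s)K}\Lambda_v^{-1}\|\cdot\|\Lambda_v\,\D_x U_0\,\Lambda_v^{-1}\|\cdot\|\Lambda_v e^{-sK_0}\|$ and using Corollary~\ref{coro23} together with step (a); one then Gronwalls on $\sup \sqrt{s}\,t^{3\alpha/2}$-type quantities. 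Interpolation gives all $\alpha\in[0,1]$. \emph{(c)} For (i) with $\gamma=1$ (then $\gamma=2$, then interpolation), commute: $\Lambda^\gamma e^{-tK_0}\Lambda^{-\gamma} = e^{-tK_0} + \eps_0\int_0^t \Lambda^\gamma e^{-(t-s)K}\Lambda^{-\gamma}\cdot \Lambda^\gamma(\D_x U_0.\D_v)\Lambda^{-\gamma}\cdot \Lambda^\gamma e^{-sK_0}\Lambda^{-\gamma}\,ds$, noting $\Lambda^\gamma(\D_x U_0.\D_v)\Lambda^{-\gamma}$ is bounded for $\gamma\leq 1$ (only $V^{(2)}=$ one derivative of $\D_x U_0$ is needed, which is exactly the $W^{2,\infty}$ hypothesis) times a $\D_v\Lambda^{-1}$-type gain; then feed in~\eqref{conj} for $e^{-(t-s)K}$ and Gronwall. \emph{(d)} Finally (iv): write $(e^{-tK_0}-1)f = (e^{-tK}-1)f + \eps_0\int_0^t e^{-(t-s)K}\D_x U_0.\D_v e^{-sK_0}f\,ds$, bound the first term by $C t^{a}\|f\|_{B^a}$ via Lemma~\ref{leminterpol2} (with $a_0=0$), and the Duhamel term, using (ii), by $C\int_0^t (1+s^{-1/2})\|f\|_B\,ds \leq C\sqrt t\,\|f\|_B \leq Ct^{a/2}\|f\|_{B^a}$ for $a\le 2$ on $(0,1]$; combine (with $t^a \leq t^{a/2}$ on $(0,1]$ absorbed into $C_0$, or more carefully keeping $t^a$ for $a\le 2$ — note $\sqrt t \le t^{a}$ fails for $a>1/2$, so instead interpolate the Duhamel contribution or observe $\|(e^{-tK_0}-1)f\|_B\le \|e^{-tK_0}-1\|_{B^2\to B}\|f\|_{B^2}$ combined with the $a=0$ trivial bound).

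\textbf{Main obstacle.} The delicate point is step (b)/(iii): the factor $\|\Lambda_x e^{-(t-s)K}\|_{B\to B} \sim (t-s)^{-3/2}$ is badly non-integrable near $s=t$, so a naive Duhamel estimate diverges. The resolution — splitting the semigroup via $e^{-(t-s)K} = e^{-(t-s)K/2}e^{-(t-s)K/2}$ and moving derivatives so that each surviving factor carries an \emph{integrable} power, while transferring the genuinely singular derivative onto the already-controlled quantity $\Lambda_v e^{-sK_0}$ from step (a) — is routine once set up but must be done carefully to close the bootstrap; the matching constants must stay uniform in $\eps_0\in[0,1]$, which they do because every estimate on the smooth side ($K$) is, and the perturbation carries the explicit factor $\eps_0$ (or at worst $\|\D_x U_0\|_{W^{1,\infty}}$). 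A secondary bookkeeping issue is that all these estimates are only claimed (and only needed) on the finite interval $t\in(0,1]$, which is what makes the Beta-type integrals and the Gronwall exponentials bounded by absolute constants.
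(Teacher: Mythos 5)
Your strategy --- Duhamel around the smooth semigroup $e^{-tK}$ and a bootstrap on $e^{-tK_0}$ --- is genuinely different from what the paper does, and it has a real gap exactly at the point you flag as the ``main obstacle''. The paper does not perturb around $K$ at all: for (ii), (iii) and (iv) it simply reruns the proofs of Proposition~\ref{prop21} and Lemma~\ref{leminterpol2} for the operator $K_0$ itself, the whole content being that the underlying hypoelliptic inputs from \cite[Proposition 3.1]{her06} (the bounds on $(\D_v+v)e^{-tK}$ and $(\D_x+\D_x V)e^{-tK}$) have constants depending only on the \emph{second} derivatives of the potential, which remain uniformly bounded for $V_e+\eps_0 U_0$ with $U_0\in W^{2,\infty}$. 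Your approach tries to avoid re-examining those proofs, but then for (iii) you must integrate $\|\Lambda_x e^{-(t-s)K}\|\sim (t-s)^{-3/2}$ against the $s^{-1/2}$ coming from the $\D_v$ in the perturbation, which diverges, and neither of your proposed fixes resolves this: splitting $e^{-(t-s)K}=e^{-(t-s)K/2}e^{-(t-s)K/2}$ leaves the full $(t-s)^{-3/2}$ on one factor, and the factorization $\|\Lambda_x e^{-(t-s)K}\Lambda_v^{-1}\|\cdot\|\Lambda_v\,\D_xU_0\,\Lambda_v^{-1}\|\cdot\|\Lambda_v e^{-sK_0}\|$ is not legitimate because $\Lambda_v(\D_xU_0.\D_v)\Lambda_v^{-1}$ is an operator of order one in $v$ (the multiplier $\D_xU_0$ commutes with $\Lambda_v$, so nothing is gained), while no estimate in Section~\ref{sect2} gives $\|\Lambda_x e^{-\tau K}\Lambda_v^{-1}\|$ better than $\tau^{-3/2}$. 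Making Duhamel work here would require a multi-stage fractional iteration with conjugated bounds of the type $\|\Lambda_x^{\theta}e^{-\tau K_0}\Lambda_x^{-\theta'}\|$, which you neither set up nor have available.

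A second, smaller error is in your step (c): you propose to prove (i) for $\gamma=1$, ``then $\gamma=2$, then interpolation''. The case $\gamma=2$ is unavailable: the proof of Proposition~\ref{gammaplusmoins} for $\gamma=2$ uses the commutator $[X_0,\Lambda_x^2]$, which involves $V^{(3)}$, i.e.\ three derivatives of $U_0$ --- precisely why the paper restricts point (i) to $\gamma\in[0,1]$ and replaces the $\gamma=2$ computation by a direct energy (Gronwall) estimate on the system satisfied by $g=(\D_x+\D_xV)f$ and $h=(\D_v+v)f$, which needs only the Hessian of $V$. Your step (a) is workable modulo the usual care (the absorption constant is beaten by the $\sqrt{t_0}$ from the Beta integral rather than by smallness of $\eps_0$, plus an a priori finiteness argument for the bootstrap quantity), and your step (d), in the variant where you interpolate between the trivial $a=0$ bound and a direct $B^2\to B$ bound via $(e^{-tK_0}-1)f=-\int_0^t e^{-sK_0}K_0f\,ds$, is essentially the paper's own argument (note that this yields $t^{a/2}$, the exponent actually used later in Lemma~\ref{convolmoins}, rather than the $t^{a}$ written in the statement). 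But as it stands, points (iii) and (i) are not proved by your argument.
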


\begin{proof}
We first note that the proof of point $(iv)$ given in Lemma~\ref{leminterpol2} is unchanged (for $a_0 = 0$) under the new assumptions on the potential $V$, and uniformly w.r.t. $\eps_0$. For points $(iii)$ and $(ii)$ this is the same w.r.t. the proof of Proposition~\ref{prop21} and we emphasise that the constants  only depend on the second derivatives of the potential, which are here uniformly bounded w.r.t. $\eps_0$.

\bigskip It therefore only remains to check point $(i)$  for which the proof of point~\eqref{conj} cannot be directly adapted, since we have to restrict here to the case when $\gamma \in [0,1]$. We have to show that $e^{-tK}$ is bounded from $B^{\gamma,\gamma}_{x,v}$ into itself. We first begin with the case $\gamma=1$. We  now \Bk use  that
 $$
 \norm{f}_{B^1} \sim \norm{ \Lambda f}_B \sim \norm{ (\D_x + \D_x V) f }_{B} + \norm{ (\D_v + v) f }_{B}
 $$
 with uniform w.r.t. $\eps_0$  equivalence constants, since $U_0 \in W^{2, \infty}$.
 We therefore look, for an initial data $f_0 \in B^{1,1}_{x,v}$ at the equation satisfied by $g= (\D_x + \D_x V) f$ and $h=  (\D_v + v) f$ in $B$. We consider again  the \Bk operator
$X_0 = v.\D_{x} -\D_x \Ve.\D_{v}$.
 Since
\begin{equation*}
\D_t f + X_0 f  -\eps_0 \D_x U_0.\D_{v}f - \D_v. (\D_v + v) f = 0, \ \ \ \ \  f_{t= 0} = f_0
\end{equation*}
we get the system
\begin{equation*}
\begin{split}
& \D_t g + X_0 g - \eps_0 \D_x U_0 \D_v g -\D_v. (\D_v + v) g =   \Hess V h   \\
& \D_t h + X_0 h - \eps_0 \D_x U_0 \D_v  h\Bk  -\D_v. (\D_v + v)  h \Bk =   -h - g+\eps_0 \partial_x U_0 f , \\
 & \textrm{ with }\quad \qquad g_{t= 0} = g_0\in B\quad  \textrm{ and }\quad  h_{t= 0} = h_0 \in B.
  \end{split}
\end{equation*}
 Integrating the three last equations against respectively $f$, $g$ and  $h$ in $B$   gives,
\begin{equation*}
\D_t ( \norm{f}_{B}^2+ \norm{g}_{B}^2 + \norm{h}_{B}^2)  \leq C ( \norm{f}_{B}^2+ \norm{g}_{B}^2 + \norm{h}_{B}^2)
\end{equation*}
since $V$ has a Hessian uniformly bounded w.r.t. $\eps_0$.
We therefore get
$$
 \norm{f(t)}_{B} + \norm{g(t)}_{B} + \norm{h(t)}_{B} \leq C_1e^{C_2 t} (\norm{f_0}_{B}+ \norm{g_0}_{B} + \norm{h_0}_{B})
$$ \Bk
and we get that $e^{-tK_0}$ is (uniformly in $t \in [0,1]$ and $\eps_0 \in [0,1]$) bounded from $B^1$ to $B^1$.
Now the result is also clear for $\gamma = 0$ by the semi group property, and by interpolation we get that $e^{-tK_0}$ is (uniformly in $t \in [0,1]$ and $\eps_0 \in [0,1]$) bounded from $B^\gamma$ to $B^\gamma$ for $\gamma \in [0,1]$.
As a conclusion we get
\begin{equation*}
\|\L^{\gamma} \e^{-tK} \L^{-\gamma} \|_{B^{}\to B^{}}\leq C_{\gamma}.
\end{equation*}
This concludes the proof of point  $(i)$ and the proof of the  Proposition. \Bk
\end{proof}

As a consequence, a certain number of  results  of Section~\ref{sect2} remain true with proofs without changes. \Bk
We gather them in the following corollary.

\begin{coro} \ph \label{corlow}
 There exists $C>0$ such that the  following is true uniformly in $\eps_0 \in [0,1]$ and $t\in (0,1)$
\begin{enumerate}[(i)]
\item $\forall \beta \in [0,1]$, \; $\forall a \in [0,\beta]$,\qquad $\norm{ \Lambda_v^\beta e^{-tK_0} }_{B^a \rightarrow B} \leq C(1+ t^{-(\beta-a)/2})$,\vspace{5pt}
\item $\forall \beta \in [0,1]$,  \qquad $\norm{ \Lambda_v^\beta e^{-tK_0} \Lambda_v^{1-\beta} }_{B\to B} \leq C(1+ t^{-1/2})$,\vspace{5pt}
\item $\forall \alpha, \beta \in [0,1]$,   $\norm{ \Lambda^\alpha e^{-tK_0} \Lambda_v^{1-\beta}}_{B\to B} \leq C(1+ t^{-1/2+\beta/2 -3\alpha/2})$.
    \end{enumerate}
    \end{coro}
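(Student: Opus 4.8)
The plan is to deduce each of the three bounds in Corollary~\ref{corlow} from Proposition~\ref{lowregularity}, following verbatim the derivations of the corresponding statements in Section~\ref{sect2} (namely Lemma~\ref{leminterpol}, the second estimate of Corollary~\ref{coro23}, and the first estimate of Corollary~\ref{coro23}), and simply checking that every ingredient used there has now been re-established for $K_0$ uniformly in $\eps_0\in[0,1]$ and $t\in(0,1)$. Concretely, the only inputs needed are: the conjugation bound $\norm{\Lambda^\gamma e^{-tK_0}\Lambda^{-\gamma}}_{B\to B}\leq C$ for $\gamma\in[0,1]$ (Proposition~\ref{lowregularity}(i)), the regularizing bounds $\norm{\Lambda_v^\beta e^{-tK_0}}_{B\to B}\leq C t^{-\beta/2}$ and $\norm{\Lambda_x^\alpha e^{-tK_0}}_{B\to B}\leq C t^{-3\alpha/2}$ (parts (ii) and (iii)), the trivial comparison $1\leq\Lambda_x^2,\Lambda_v^2\leq\Lambda^2$, and the semigroup property $e^{-tK_0}=e^{-(t/2)K_0}e^{-(t/2)K_0}$ (together with the remark, valid here as well since the bounds only depend on the $t$-power, that one may split the time interval). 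I would state at the outset that these are exactly the facts used in the smooth case, so no new idea is required.

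For point (i), I would argue as in Lemma~\ref{leminterpol}: for $a=0$ it is Proposition~\ref{lowregularity}(ii) (with the harmless $1+$ added); for $a=\beta$ one writes, for $f\in B^\beta$,
\begin{equation*}
\|\Lambda_v^\beta e^{-tK_0}f\|_B\leq\|\Lambda^\beta e^{-tK_0}f\|_B\leq\|\Lambda^\beta e^{-tK_0}\Lambda^{-\beta}\|_{B\to B}\,\|\Lambda^\beta f\|_B\leq C\|f\|_{B^\beta},
\end{equation*}
using Proposition~\ref{lowregularity}(i) with $\gamma=\beta\in[0,1]$; the general case $a\in[0,\beta]$ follows by interpolation between these two endpoints. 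For points (ii) and (iii) I would copy the proof of Corollary~\ref{coro23}: split $e^{-tK_0}=e^{-(t/2)K_0}\,e^{-(t/2)K_0}$ and estimate
\begin{equation*}
\|\Lambda^\alpha e^{-tK_0}\Lambda_v^{1-\beta}\|_{B\to B}\leq\|\Lambda^\alpha e^{-(t/2)K_0}\Lambda^{-\alpha}\|_{B\to B}\,\|\Lambda^\alpha e^{-(t/2)K_0}\|_{B\to B}\cdot(\cdots)
\end{equation*}
— more precisely, bound $\|\Lambda^\alpha e^{-(t/2)K_0}\|_{B\to B}\leq Ct^{-3\alpha/2}$ via $\Lambda^\alpha\leq$ a combination of $\Lambda_x^\alpha$ and $\Lambda_v^\alpha$ controlled by parts (iii) and (ii), and bound the remaining factor $\|e^{-(t/2)K_0}\Lambda_v^{1-\beta}\|_{B\to B}$ by taking adjoints (noting $K_0^\ast=-X_0+\eps_0\D_xU_0.\D_v-\D_v.(\D_v+v)$ has the same structure and the same estimates, exactly as in Section~\ref{sect2}) and applying part (ii) with exponent $1-\beta\in[0,1]$, giving $Ct^{-(1-\beta)/2}$. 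Multiplying and relabelling gives $\|\Lambda^\alpha e^{-tK_0}\Lambda_v^{1-\beta}\|_{B\to B}\leq C(1+t^{-1/2+\beta/2-3\alpha/2})$, which is (iii); taking $\alpha=\beta$ inside and using $\Lambda_v^\beta\leq\Lambda^\beta$ gives (ii).

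The main (and essentially only) obstacle here is not in the corollary itself — which is a routine unwinding — but in making sure, as the corollary's phrasing ``with proofs without changes'' implicitly demands, that the auxiliary facts invoked in Section~\ref{sect2} (stability of $B^\perp$ is not needed here, but the self-adjointness of $K_0^\ast$ having the same form, the commutator structure, and the interpolation between $B$-scales) genuinely survive the drop in regularity of the coefficients from $C^\infty$ to $W^{2,\infty}$. Since $U_0\in W^{2,\infty}$, all the commutators appearing (e.g. $[X_0+\eps_0\D_xU_0.\D_v,\Lambda_v^2]$, $[\,\cdot\,,\Lambda_x^2]$) still produce bounded operators on $B$ with constants depending only on $\|U_0\|_{W^{2,\infty}}$ and on finitely many derivatives of $\Ve$, which is precisely what Proposition~\ref{lowregularity} already secured; thus I would simply remark that the constants in Corollary~\ref{corlow} inherit this uniformity and conclude. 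If one wanted to be careful about the restriction $\gamma\in[0,1]$ in part (i) of Proposition~\ref{lowregularity}, I would note that since the exponents $\alpha,\beta,1-\beta$ occurring above all lie in $[0,1]$, no conjugation of order $>1$ is ever needed, so the restriction is not felt.
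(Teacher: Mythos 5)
Your overall strategy --- deducing all three bounds from Proposition~\ref{lowregularity} by repeating the arguments of Lemma~\ref{leminterpol} and Corollary~\ref{coro23}, with the splitting $e^{-tK_0}=e^{-(t/2)K_0}e^{-(t/2)K_0}$ and an adjoint argument for the factor $\norm{e^{-(t/2)K_0}\Lambda_v^{1-\beta}}_{B\to B}$ --- is exactly the paper's, and your treatments of (i) and (iii) are correct. However, your final step fails: specializing (iii) to $\alpha=\beta$ gives $\norm{\Lambda_v^\beta e^{-tK_0}\Lambda_v^{1-\beta}}_{B\to B}\leq C(1+t^{-1/2+\beta/2-3\beta/2})=C(1+t^{-1/2-\beta})$, which is strictly weaker than the claimed $C(1+t^{-1/2})$ for every $\beta>0$. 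The loss comes from majorizing $\Lambda_v^\beta$ by $\Lambda^\beta$, which forces you to pay the $x$-regularization rate $t^{-3\beta/2}$ where only the $v$-rate $t^{-\beta/2}$ is needed; point (ii) is precisely the statement that no $x$-derivatives are involved, so it cannot be recovered from (iii). The correct (and intended) derivation is to estimate the first factor directly by Proposition~\ref{lowregularity}(ii): $\norm{\Lambda_v^\beta e^{-tK_0}\Lambda_v^{1-\beta}}_{B\to B}\leq\norm{\Lambda_v^\beta e^{-(t/2)K_0}}_{B\to B}\,\norm{e^{-(t/2)K_0}\Lambda_v^{1-\beta}}_{B\to B}\leq Ct^{-\beta/2}\cdot C(1+t^{-(1-\beta)/2})\leq C'(1+t^{-1/2})$. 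Since you already have both ingredients in hand, this is a one-line fix, but as written the proof of (ii) does not close.
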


\begin{proof}
The proof of $(i)$  follows the one of Lemma~\ref{leminterpol} thanks to points $(i)$, $(ii)$ and $(iii)$  in  Proposition~\ref{lowregularity}. Points  $(ii)$ and $(iii)$ are  consequences respectively of $(ii)$  and $(iii)$  of Proposition~\ref{lowregularity} since
$$
\norm{ \Lambda_v^\beta e^{-tK_0} \Lambda_v^{1-\beta}}_{B\to B} \leq \norm{ \Lambda_v^\beta e^{-tK_0/2} }_{B\to B} \norm{ e^{-tK_0/2} \Lambda_v^{1-\beta} }_{B\to B}
$$
and
$$
\norm{ \Lambda^\alpha e^{-tK_0} \Lambda_v^{1-\beta} }_{B\to B} \leq \norm{ \Lambda^\alpha e^{-tK_0/2} }_{B\to B} \norm{ e^{-tK_0/2} \Lambda_v^{1-\beta} }_{B\to B}.
$$
  \end{proof}

\begin{rema} \ph\label{rema1}
Let us observe that if one only has $f_{0}\in B(\R^{6})\cap L^{\infty}(\R^{6})$,  one can prove that~$U_0$ defined in~\eqref{hyp} satisfies $U_{0}\in W^{2,p}(\R^{3})$ for any $2\leq p<\infty$.  In other words, the assumption  $U_{0}\in W^{2,\infty}(\R^{3})$  fills in an $\eps-$gap of regularity.  More precisely, \Bk let $p\geq 2$ and $q\leq 2$ such that $1/p+1/q=1$. Then, by H\"older
\begin{equation*}
|\Delta U_{0}|=\int f_{0} dv \leq \big(\int f_0^{p}\M^{-1} dv \big)^{1/p}   \big(\int \M^{q/p} dv \big)^{1/q} .
\end{equation*}
Thus using that $    \dis \int  \M^{q/p} dv \in L^{\infty}(\R^{3}) $, we get
\begin{equation*}
\int |\Delta U_{0}|^{p}dx  \leq C \int f_0^{p}\M^{-1} dv dx \leq C \|f_0\|^{p-2}_{L^{\infty}(\R^{6})}\|f_0\|^{2}_{B},
\end{equation*}
which implies that $U_{0}\in W^{2,p}(\R^{3})$ by elliptic regularity.
\end{rema}

Now we prove a result that will be useful for the short time analysis in the next section. Again we work with the linear Fokker-Planck operator $K_0$ with potential $V_{e} + \eps_0 U_0$.

 \begin{lemm}\ph  \label{convolmoins}
Assume that  $d=3$ and $a>1/2$. Let $f_{0}\in B^{a}(\R^{6})\cap L^{\infty}(\R^{6})$ and  denote by
 \begin{equation*}
 S_{0}(t,x)=\frac{x}{|x|^{3}}\star \int\big(\e^{-tK_{0}}-1\big)f_{0}(x,v)dv.
 \end{equation*}
 Then for all $\eps\ll 1$ and $0\leq t\leq 1$ and uniformly in $\eps_0 \in [0,1]$ we have
 \begin{equation}\label{211}
 \|S_{0}(t)\|_{L^{\infty}(\R^{3})}\leq C t^{a/3-\eps}\big(\|f_{0}\|_{L^{\infty}}+\|f_{0}\|_{B^{a}}\big).
 \end{equation}
 \end{lemm}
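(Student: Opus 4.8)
The plan is the following. Set $\rho(t,x)=\int\big(\e^{-tK_{0}}-1\big)f_{0}(x,v)\,dv$, so that $S_{0}(t)=\frac{x}{|x|^{3}}\star\rho(t)$, and write $g(t)=\big(\e^{-tK_{0}}-1\big)f_{0}$. I would estimate $\rho(t)$ in two spaces $L^{p}_{x}(\R^{3})$: once with $p=2$, where a short-time gain is available from the $B^{a}$-regularity of $f_{0}$, and once with some $p_{1}>3$, where I only use the $L^{\infty}$-bound on $f_{0}$; the two estimates are then recombined by splitting the Coulomb kernel $\frac{x}{|x|^{3}}$ at a radius chosen to balance the two contributions. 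All constants below will be uniform in $\eps_{0}\in[0,1]$.

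\emph{Step 1: an $L^{p}_{x}$-estimate for $\rho(t)$.} Here I would use two ingredients. First, $\e^{-tK_{0}}$ is the solution operator of the linear Fokker--Planck equation with potential $V_{e}+\eps_{0}U_{0}$ (i.e.\ \eqref{lkfp} with $E=F=0$), so Proposition~\ref{propos}$(ii)$ (see also Corollary~\ref{lemMax1}) gives $\|g(t)\|_{L^{\infty}(\R^{6})}\le(e^{dt}+1)\|f_{0}\|_{L^{\infty}}\le C\|f_{0}\|_{L^{\infty}}$ for $0\le t\le1$. Second, the short-time smoothing bound $\|(\e^{-tK_{0}}-1)h\|_{B}\le Ct^{a/2}\|h\|_{B^{a}}$ for $h\in B^{a}$ --- which is Lemma~\ref{leminterpol2} with $a_{0}=0$, still valid for the rough operator $K_{0}$ by Proposition~\ref{lowregularity} --- yields $\|g(t)\|_{B}\le Ct^{a/2}\|f_{0}\|_{B^{a}}$. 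Given these, I would fix $p\in[2,\infty)$ with conjugate exponent $p'$ and, for fixed $x$, write $|g|=|g|^{1-2/p}\cdot\big(|g|^{2/p}\mminf^{-1/p}\big)\cdot\mminf^{1/p}$; H\"older in $v$ with exponents $(\infty,p,p')$ then gives
\begin{equation*}
|\rho(t,x)|\le\int|g(t,x,v)|\,dv\le C_{p}\,\|g(t)\|_{L^{\infty}}^{1-2/p}\Big(\int|g(t,x,v)|^{2}\mminf^{-1}\,dv\Big)^{1/p},
\end{equation*}
with $C_{p}=\big(\sup_{x}\int\mminf^{1/(p-1)}\,dv\big)^{1/p'}$. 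The point is that $\Vinf=V_{e}+\eps_{0}\Uinf\ge0$ (Subsection~\ref{sect32}), so $\mminf^{1/(p-1)}$ is dominated, uniformly in $x$, by a fixed Gaussian in $v$; hence $C_{p}<\infty$, uniformly in $\eps_{0}\in[0,1]$. Raising to the power $p$, integrating in $x$ and using $\iint|g(t)|^{2}\mminf^{-1}\,dx\,dv\le C\|g(t)\|_{B}^{2}$, one arrives at
\begin{equation*}
\|\rho(t)\|_{L^{p}_{x}(\R^{3})}\le C_{p}\,\|g(t)\|_{L^{\infty}}^{1-2/p}\,\|g(t)\|_{B}^{2/p}\le C_{p}\,t^{a/p}\big(\|f_{0}\|_{L^{\infty}}+\|f_{0}\|_{B^{a}}\big),
\end{equation*}
by the two bounds above and Young's inequality. (At $p=2$ this is just the weighted Cauchy--Schwarz inequality in $v$, and $\|f_{0}\|_{L^{\infty}}$ is not used.)

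\emph{Step 2: recombination.} I would then fix $p_{1}\in(3,\infty)$ (for instance $p_{1}=4$) and $p_{2}=2$, and for $R>0$ split the integral defining $S_{0}(t,x)$ into the regions $|x-y|<R$ and $|x-y|\ge R$. Estimating the kernel on each region by H\"older --- the finiteness of $\||z|^{-2}\|_{L^{p_{1}'}(\{|z|<R\})}=c_{1}R^{1-3/p_{1}}$ and of $\||z|^{-2}\|_{L^{p_{2}'}(\{|z|\ge R\})}=c_{2}R^{1-3/p_{2}}$ requiring exactly $p_{1}>3$ and $p_{2}<3$ --- gives
\begin{equation*}
\|S_{0}(t)\|_{L^{\infty}(\R^{3})}\le c_{1}R^{1-3/p_{1}}\|\rho(t)\|_{L^{p_{1}}_{x}}+c_{2}R^{1-3/p_{2}}\|\rho(t)\|_{L^{p_{2}}_{x}}.
\end{equation*}
Inserting Step~1 and choosing $R=t^{a/3}$ (admissible since $R\le1$ for $0\le t\le1$), each term equals, up to a constant, $R^{1-3/p_{i}}t^{a/p_{i}}=t^{(a/3)(1-3/p_{i})+a/p_{i}}=t^{a/3}$, so that
\begin{equation*}
\|S_{0}(t)\|_{L^{\infty}(\R^{3})}\le C\,t^{a/3}\big(\|f_{0}\|_{L^{\infty}}+\|f_{0}\|_{B^{a}}\big),\qquad 0\le t\le1,
\end{equation*}
uniformly in $\eps_{0}\in[0,1]$. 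Since $t^{a/3}\le t^{a/3-\eps}$ for $t\le1$, this is (slightly) stronger than the claimed bound~\eqref{211}.

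\emph{Where the difficulty lies.} Once the two $L^{p}_{x}$-estimates of Step~1 are available, the rest is routine; the only point worth stressing is that the sharp power $t^{a/2}$ in the $B$-smoothing estimate is exactly what is needed, since through the factor $1/p$ in the H\"older exponent and the factor $1/3$ coming from the convolution kernel it recombines into precisely $t^{a/3}$ --- in particular no regularity of $f_{0}$ beyond $B^{a}$ is used here, and the Poisson-term estimates of Lemma~\ref{lem22} and Lemma~\ref{HLSSE} (apart from its trivial case $\alpha=0$) are not needed. The two technical checks to be careful about are: the finiteness of $\int\mminf^{1/(p-1)}\,dv$ uniformly in $x$ and $\eps_{0}$, which rests on the sign condition $\Vinf\ge0$; and the fact that the short-time $B$-estimate survives for the operator $K_{0}$ with only $W^{1,\infty}$ lower-order coefficients, which is Proposition~\ref{lowregularity}$(iv)$, i.e.\ the adaptation to $K_{0}$ of Lemma~\ref{leminterpol2}.
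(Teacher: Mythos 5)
Your proof is correct, and its first half is identical to the paper's: the bound $\|\rho(t)\|_{L^{p}_{x}}\le C_{p}\|g(t)\|_{L^{\infty}}^{1-2/p}\|g(t)\|_{B}^{2/p}\le C_{p}\,t^{a/p}\big(\|f_{0}\|_{L^{\infty}}+\|f_{0}\|_{B^{a}}\big)$, obtained from the weighted H\"older inequality in $v$, the maximum principle, and the short-time smoothing $\|g(t)\|_{B}\lesssim t^{a/2}\|f_{0}\|_{B^{a}}$, is exactly the paper's intermediate estimate. Where you genuinely diverge is in upgrading to $\|S_{0}\|_{L^{\infty}_{x}}$. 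The paper uses the density bound only for a single $p<3$ (so that Hardy--Littlewood--Sobolev lands in $L^{q}_{x}$, $q<\infty$), and then interpolates by Gagliardo--Nirenberg between $\|S_{0}\|_{L^{q}_{x}}$ and $\|S_{0}\|_{W^{\sigma,q}_{x}}$ with $\sigma=a-1/2$, the latter controlled through the $H^{a}_{x}$ regularity of $\int h_{0}\,dv$ (the mechanism of Lemma~\ref{HLSSE}); sending $q\to\infty$ is what produces the $\eps$-loss in the exponent. You instead use the density bound for both $p_{2}=2$ and some $p_{1}>3$ and split the Coulomb kernel at radius $R=t^{a/3}$; your kernel norms $R^{1-3/p_{1}}$ and $R^{1-3/p_{2}}$ and the balancing $R^{1-3/p_{i}}t^{a/p_{i}}=t^{a/3}$ are correct. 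This route is more elementary (no fractional Sobolev norm of $S_{0}$, no Gagliardo--Nirenberg, no use of the nontrivial part of Lemma~\ref{HLSSE}) and yields the slightly sharper exponent $t^{a/3}$ with no $\eps$-loss. A further small merit: you attribute $\|g(t)\|_{L^{\infty}}\le C\|f_{0}\|_{L^{\infty}}$ to the maximum principle of Proposition~\ref{propos}$(ii)$, which is the correct source, whereas the paper's citation of point $(iv)$ of Proposition~\ref{lowregularity} (a $B$-norm statement) at that step is a slip.
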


 \begin{proof}
In the sequel, $0\leq t\leq 1$ is fixed.  Let $\s=a-1/2>0$ and let $q>3/\s$ be large.    Then by the  Gagliardo-Nirenberg inequality
 \begin{equation}\label{GN}
 \|S_{0}\|_{L_{x}^{\infty}}\leq C \|S_{0}\|^{1-\frac{3}{\s q}}_{L_{x}^{q}} \|S_{0}\|^{\frac{3}{\s q}}_{W_{x}^{\s,q}},
 \end{equation}
 and we now estimate the previous terms.

By~\eqref{HLS}, there exists $p<3$ (with $p\longrightarrow 3$ when $q\longrightarrow +\infty$) such that
  \begin{equation*}
  \|S_{0}\|_{L_{x}^{q}} \leq C \big\| \int h_{0}dv\big\|_{L_{x}^{p}},
  \end{equation*}
where $h_{0}=\big(\e^{-tK_{0}}-1\big)f_{0}$. Then, by H\"older (where $p'$ is the conjugate of $p$)
\begin{eqnarray*}
 \int |h_{0}|dv&=&\int \big( |h_{0}| \mminf^{-1/p}\big)\mminf^{1/p}dv\\
 &\leq&\Big(\int | h_{0}|^{p}\mminf^{-1}dv\Big)^{1/p}\Big(\int \mminf ^{p'/p}dv\Big)^{1/p'}\\
  &\leq&C\Big(\int | h_{0}|^{p}\mminf^{-1}dv\Big)^{1/p}.
\end{eqnarray*}
This implies that
 \begin{equation}\label{S1}
  \|S_{0}\|_{L_{x}^{q}} \leq C \big\| \int h_{0}dv\big\|_{L_{x}^{p}} \leq C\Big(\int | h_{0}|^{p}\mminf^{-1}dvdx\Big)^{1/p}\leq  C \|h_{0}\|^{1-2/p}_{L^{\infty}}  \|h_{0}\|^{2/p}_{B} .
  \end{equation}

Now, by point $(iv)$ of Proposition~\ref{lowregularity} we have $ \|h_{0}\|_{L^{\infty}} \leq C \|f_{0}\|_{L^{\infty}} $, and by Lemma~\ref{leminterpol2}, $ \|h_{0}\|_{B} \leq Ct^{a/2} \|f_{0}\|_{B^{a}} $, hence
\begin{equation*}
 \|S_{0}\|_{L_{x}^{q}} \leq Ct^{a/p }\big(\|f_{0}\|_{L^{\infty}}+\|f_{0}\|_{B^{a}}\big).
\end{equation*}
Next, by~\eqref{HLS} and Sobolev (recall that $p\sim 3$ for $q$ large)
\begin{equation*}
  \|S_{0}\|_{W_{x}^{\s,q}} \leq C \big\| (1-\Delta_x)^{\s/2}\int h_{0}dv\big\|_{L_{x}^{p}}  \leq C \big\| (1-\Delta_x)^{a/2}\int h_{0}dv\big\|_{L_{x}^{2}} ,
\end{equation*}
since $\s+1/2=a$. Then we proceed as in the proof of~\eqref{claim} to get
\begin{equation}\label{S2}
  \|S_{0}\|_{W_{x}^{\s,q}} \leq C \|h_{0}\|_{B^{a}}\leq C  \|f_{0}\|_{B^{a}}.
\end{equation}
Fix $\eps\ll1$. Then for $q\gg1$, we combine~\eqref{GN},~\eqref{S1} and~\eqref{S2} to get~\eqref{211}.
 \end{proof}

\Bk

\section{\texorpdfstring{Proof of Theorem~\ref{thm1} (case $\boldsymbol{d=2}$)}{Proof of Theorem 1.2 (case d=2)}}\label{sect4}

\subsection{Functional setting}\label{Sub41}
To begin with, we introduce the functional framework which will be used in both cases  $d=2$ or $d=3$.

To show the trend to equilibrium,  we look for a solution of the form $f=f_{\infty}+g$ with $f_{\infty}=c\mminf$ and $g\in B^{\perp}$. The normalization $\int fdxdv=\int \mminf dxdv=1$ then implies that $f_{\infty}=\mminf$. Hence we write
\begin{equation*}
f=\M_{\infty}+g,\qquad E=E_{\infty}+ F,
\end{equation*}
with
\begin{equation*}
 \partial_x\Uinf=E_{\infty}=-  \frac{1}{|\S^{d-1}|}\frac{x}{|x|^{d}}\star \int \M_{\infty}dv,\qquad F=-  \frac{1}{|\S^{d-1}|}\frac{x}{|x|^{d}}\star \int g dv.
\end{equation*}
In the sequel  denote by
\begin{equation*}
K=\Kinf.
\end{equation*}
We want to take profit of the regularization property stated in Lemma~\ref{lem22}, thus we look for a solution of the form
\begin{equation*}
g=\e^{-tK}g_{0}+h,\qquad F=F_{0}+G,
\end{equation*}
with
\begin{equation*}
 F_{0}=-  \frac{1}{|\S^{d-1}|}\frac{x}{|x|^{d}}\star \int\e^{-tK}g_{0}dv,\qquad G=-  \frac{1}{|\S^{d-1}|}\frac{x}{|x|^{d}}\star \int h dv,
\end{equation*}
and $h(0)=G(0)=0$. At this stage we observe that $f_{0}=\mminf +g_{0}$ and that for all $t\geq 0$, $\e^{-tK}f_{0}=\mminf +\e^{-tK}g_{0}$.\medskip

We construct the solution with a  fixed point argument on $(h,G)$, and therefore we define the map   $\Phi=(\Phi_{1},\Phi_{2})$ given by
\begin{equation*}
\Phi_{1}(h,G)(t)= \eps_{0}\int_0^t \e^{-(t-s)K}  \big(F_{0}(s)+G(s)\big)\D_v \big(  \mminf+\e^{-sK}g_{0}+h(s)    \big)ds
\end{equation*}
\begin{equation*}
\Phi_{2}(h,G)(t)=-  \frac{\eps_0}{|\S^{d-1}|} \frac{x}{|x|^{d}}\star \int_{\R^{d}}\int_0^t \e^{-(t-s)K}  \big(F_{0}(s)+G(s)\big)\D_v \big(  \mminf+\e^{-sK}g_{0}+h(s)    \big)dsdv,
\end{equation*}
and we observe that $(f,E)$ solves~\eqref{int} if and only if $(h,G)=\Phi(h,G)$.  For $\a,\b,\gamma,\delta,\s  \geq 0 \Bk$ define the norms
\begin{equation*}
\|h\|_{X_{\delta}^{\a,\b}}=\sup_{t\geq 0} \Big(\frac{ t^{\delta}}{1+t^{\delta}}e^{\s\kappa t}\|h(t,.)\|_{B_{x,v}^{\a,\b}(\R^{2d})}\Big),
\end{equation*}
\begin{equation*}
\|G\|_{Y_{ }}=\sup_{t\geq 0} \Big( e^{\s\kappa t}\|G(t,.)\|_{L^{\infty}(\R^{d})}\Big),
\end{equation*}
define  the Banach space
$$Z:=X_{\delta}^{\a,\b}\times Y_{} ,\qquad \text{with}\qquad \|(h,G)\|_{Z}=\max\big(\|  h\|_{X_{\delta}^{\a,\b}},  \|G\|_{Y_{ }}\big),$$
and   denote by $\Gamma_{1}$ its unit ball. In each of the cases $d=2$ or 3, for a given initial condition $g_{0}$, we will prove that if $|\eps_{0}|<1$ is small enough, the map $\Phi$ is a contraction of the  ball  $\Gamma_{1} \subset Z$. To alleviate notations, we assume in the sequel that $\eps_{0}>0$.

%%%%%%%%%%%%%%% {Proof of Theorem~\ref{thm1}}
%%%%%%%%%%%%%%%%%%%%%%%%%%%%%%%%%%%%%%%%%%%%%%%%%%%%%%%%%%%%

\subsection{The fixed point argument in the case ${\boldsymbol {d=2}}$} This case is the easiest. Let $g_0\in B$. We can fix here $\a=\b =\delta=0$. Let $\eps\ll 1$ and fix  $\s=1/2$. For simplicity, we write $X=X^{0,0}_0$.

We proceed in two steps. Recall that $\Gamma_{1}$ is the unit ball of $Z$. Then  \medskip

{\bf Step1: $\Phi$ maps the ball $\Gamma_{1} \subset Z$ into itself}\medskip

$\bullet$ We estimate $ \Phi_{1}(h,G)$ in $X$.
By~\eqref{deriv} and~\eqref{p2}, we have for all $t\geq 0$
\begin{eqnarray}
\|   \Phi_{1}(h,G)(t)\|_{B^{}}&\leq & \eps_{0}\int_{0}^{t}   \|  \e^{-(t-s)K}  \big(F_{0}+G\big)\D_v \big(\mminf+\e^{-sK}g_{0}+h(s)\big)\|_{B^{}}ds \nonumber\\
&\leq & C\eps_{0} \int_{0}^{t}  \|F_{0}+G\|_{L^{\infty}(\R^{2})} \|  \e^{-(t-s)K} \Lambda^{}_{v} \|_{B^{\perp}}\|  \mminf+\e^{-sK}g_{0}+h(s) \|_{B^{}}ds,\label{2140}
\end{eqnarray}
and we estimate each factor in the previous integral.\medskip

\underline{Estimation of $\|  \mminf+\e^{-sK}g_{0}+h(s)\|_{B^{}}$:} We use that  $\mminf \in B$, and by~\eqref{maxi} we obtain
\begin{eqnarray}
\|  \mminf+\e^{-sK}g_{0}+h(s) \|_{B^{}} &\leq & \|  \mminf \|_{B^{}} +\|\e^{-sK}g_{0}\|_{B^{ }}+\|h(s) \|_{B^{ }}\nonumber \\
&\leq & C (1+\|h\|_{X}).\label{370}
\end{eqnarray}

\underline{Estimation of $ \|F_{0}+G\|_{L^{\infty}(\R^{3})}$:}
By~\eqref{e00} we get
\begin{eqnarray}\label{380}
  \|F_{0}+G\|_{L^{\infty}(\R^{2})}&\leq&   \|F_{0}\|_{L^{\infty}(\R^{2})} + \|G\|_{L^{\infty}(\R^{2})}\nonumber \\
  &\leq&   C (1+s^{ -3\eps/2})e^{-\s \kappa s}\|g_{0}\|_{B^{}} + C\e^{-\s\kappa s}\|G\|_{Y}\nonumber\\
&\leq &    C (1+s^{-3\eps/2})\e^{-\s \kappa s}(1+\|G\|_{Y}).
 \end{eqnarray}

 \underline{Estimation of $  \|  \e^{-(t-s)K} \Lambda_{v} \|_{B^{\perp}\to B^{\perp}}$:}
This follows from~\eqref{dv}
\begin{equation}\label{2150}
 \| \e^{-(t-s)K} \Lambda_{v} \|_{B^{\perp}\to B^{\perp}}\leq  C\big((t-s)^{-1/2}+1\big)e^{-\kappa(t-s)}.
\end{equation}
Therefore by~\eqref{2140},~\eqref{370},~\eqref{380} and~\eqref{2150} we have
 \begin{equation*}
 \| \Phi_{1}(h,G)(t)\|_{B^{}} \leq C \eps_{0}(1+\|h\|_{X})(1+\|G\|_{Y})    \e^{-\s\kappa t}\int_{0}^{t} \big(s^{-3\eps/2}+1\big) \big((t-s)^{-1/2}+1\big)e^{-\kappa(t-s)/2}  ds.
\end{equation*}
Now, by~\eqref{Int} we deduce
 \begin{equation*}
 \|   \Phi_{1}(h,G)(t)\|_{B^{}} \leq    C\eps_0(1+\|h\|_{X})(1+\|G\|_{Y}) \e^{-\s\kappa t},
 \end{equation*}
which in turn yields the bound
\begin{equation}\label{B10}
 \|  \Phi_{1}(h,G)\|_{X}\leq   C\eps_0(1+\|h\|_{X})(1+\|G\|_{Y})\leq C \eps_{0}\big(1+ \|  (h,G)\|_{Z}\big)^{2}.
\end{equation}

$\bullet$ We turn to the estimation of $\|\Phi_{2}(h,G)\|_{Y} $. We apply~\eqref{claim2} with
\begin{equation*}
h_{0}=\int_0^t \e^{-(t-s)K}  \big(F_{0}(s)+G(s)\big)\D_v  \big(\mminf+\e^{-sK}g_{0}+h(s)\big)ds,
\end{equation*}
then for all $t\geq 0$
\begin{eqnarray*}
\|\Phi_{2}(h,G)(t)\|_{L^{\infty}(\R^{2})}&\leq&  C \|\Lambda^{ \eps} \int_0^t \e^{-(t-s)K}  \big(F_{0}(s)+G(s)\big)\D_v \big(\mminf+\e^{-sK}g_{0}+h(s)\big)ds\|_{B^{}}\\
&\leq &  C\int_{0}^{t}   \| \Lambda^{ \eps}\e^{-(t-s)K}  \big(F_{0}+G\big)\D_v  \big(\mminf+\e^{-sK}g_{0}+h(s)\big)\|_{B^{}}ds\\
&\leq&  C\eps_{0} \int_{0}^{t}  \|F_{0}+G\|_{L^{\infty}(\R^{3})} \| \Lambda^{ \eps}\e^{-(t-s)K} \Lambda_{v} \|_{B^{\perp}}\|   \mminf+\e^{-sK}g_{0}+h(s) \|_{B^{}}ds,
\end{eqnarray*}
 where in the last line we used~\eqref{deriv}. Then by~\eqref{370},~\eqref{380} and~\eqref{prod1} with $\a=\eps$ and $\beta=0$ we get
\begin{multline*}
\|\Phi_{2}(h,G)(t)\|_{L^{\infty}(\R^{2})} \leq\\
\leq C \eps_{0}(1+\|h\|_{X})(1+\|G\|_{Y})    \e^{-\s \kappa t}  \int_{0}^{t} \big(s^{-3\eps/2}+1\big) \big((t-s)^{-1/2-3\eps/2}+1\big) e^{- \kappa (t-s)/2}  ds.
\end{multline*}
By~\eqref{Int}, this in turn implies
\begin{equation}\label{B30}
\|\Phi_{2}(h,G)\|_{Y} \leq C\eps_0(1+\|h\|_{X})(1+\|G\|_{Y})\leq C \eps_{0}\big(1+ \|  (h,G)\|_{Z}\big)^{2}.
\end{equation}

As a result, by~\eqref{B10}  and~\eqref{B30} there exists $C>0$ such that
\begin{equation*}
\|\Phi(h,G)\|_{Z} \leq  C \eps_{0}\big(1+ \|  (h,G)\|_{Z}\big)^{2}.
\end{equation*}
Therefore  we can  choose $\eps_{0}>0$ small enough so that  $\Phi$ maps the ball $\Gamma_{1}\subset Z$  into itself.\ligne

{\bf Step2: $\Phi$ is a contraction of $\Gamma_{1}$}\medskip

With exactly the  same arguments, we can also prove the contraction estimate
\begin{equation*}
\|  \Phi_{1}(h_{2},G_{2})-\Phi_{1}(h_{1},G_{2})\|_{Z} \leq C\eps_0\|  (h_{2}-h_{1},G_{2}-G_{1})\|_{Z}\big(1 +\|(h_{1},G_{1})\|_{Z}+\|(h_{2},G_{2})\|_{Z}\big).
\end{equation*}
We do not write the details.\ligne

As a conclusion, if $\eps_0>0$ is small enough, $\Phi$ has a unique fixed point in $\Gamma_{1}\subset Z$. This  shows \Bk the existence of a unique   $ h\in   \mathcal{C}\big(\,[0,+\infty[ \,;\,  B^{ }(\R^{4}) \big)$ such that $f=\mminf +\e^{-tK}g_{0}+h$ solves~\eqref{vpfp}.

\subsection{Conclusion of the proof of Theorem~\ref{thm1}}

The  convergence of $f$ to equilibrium follows from the choice of the weighted spaces. By definition
\begin{equation*}
\|h(t)\|_{B^{}}\leq C \e^{-\s \kappa t}\|h\|_{X}\longrightarrow 0,\quad \text{when}\quad  t\longrightarrow +\infty.
\end{equation*}
 Similarly,
 \begin{equation*}
\|G(t)\|_{L^{\infty}}\leq C \e^{-\s \kappa t}\|G\|_{Y}\longrightarrow 0,\quad \text{when}\quad  t\longrightarrow +\infty.
\end{equation*}

%%%%%%%%%%%%%%% {Proof of Theorem~\ref{thm2}}
%%%%%%%%%%%%%%%%%%%%%%%%%%%%%%%%%%%%%%%%%%%%%%%%%%%%%%%%%%%%
\section{\texorpdfstring{Proof of Theorem~\ref{thm2} (case $\boldsymbol {d=3}$)}{Proof of Theorem 1.3 (case d=3)}}\label{sect5}

\subsection{Small time analysis: $0\leq t\leq 1$} To begin with we prove a local well-posedness result for~\eqref{vpfp}.

\begin{prop} \ph \label{Prop51}
 Let $d=3$ and   $1/2<a<3/4$. Assume    that $f_{0}\in B^{a,a}(\R^6)\cap L^{\infty}(\R^{6}) $ is such that $U_{0}$ defined in \eqref{hyp} is in $W^{2,\infty}(\R^{3})$.  Assume moreover that Assumptions~\ref{assumption1} and~\ref{assumption2} are  satisfied. Then if $\vert \eps_0\vert$ is small enough, there exists  a unique local mild solution $f$ to~\eqref{vpfp} in the class
\begin{equation*}
 f\in \mathcal{C}\big(\,[0,1] \,;\,  B^{a,a}(\R^{6})  \big)\cap  L^{\infty}\big(\,[0,1] \,;\,    L^{\infty}(\R^{6})\big).
\end{equation*}
\end{prop}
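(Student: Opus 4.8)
The plan is to solve the integral formulation~\eqref{int} on $[0,1]$ by a contraction argument built on the semigroup of the low-regularity Fokker--Planck operator $K_{0}$ of Subsection~\ref{sect36}. Since $|\S^{d-1}|=4\pi$ in dimension $d=3$, the initial field is $E(0,\cdot)=-\frac{1}{4\pi}\frac{x}{|x|^{3}}\star\int f_{0}\,dv=\partial_{x}U_{0}$, so that~\eqref{vpfp} at $t=0$ linearises exactly to $\partial_{t}f+K_{0}f=0$ with $K_{0}=v.\partial_{x}-\partial_{x}(\Ve+\eps_{0}U_{0}).\partial_{v}-\partial_{v}.(\partial_{v}+v)$. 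Accordingly I would look for a solution of the form $f=\e^{-tK_{0}}f_{0}+g$ with $g(0)=0$, and write $E(t)-\partial_{x}U_{0}=S_{0}(t)+\mathcal{G}[g](t)$, where
\begin{equation*}
S_{0}(t)=-\frac{1}{4\pi}\frac{x}{|x|^{3}}\star\int\big(\e^{-tK_{0}}-1\big)f_{0}\,dv,\qquad \mathcal{G}[g](t)=-\frac{1}{4\pi}\frac{x}{|x|^{3}}\star\int g(t)\,dv.
\end{equation*}
Rewriting~\eqref{vpfp} as $\partial_{t}f+K_{0}f=\eps_{0}(E-\partial_{x}U_{0}).\partial_{v}f$ and passing to the Duhamel formula for $K_{0}$, the problem becomes the fixed point equation $g=\Phi(g)$ with
\begin{equation*}
\Phi(g)(t)=\eps_{0}\int_{0}^{t}\e^{-(t-s)K_{0}}\Big[\big(S_{0}(s)+\mathcal{G}[g](s)\big).\partial_{v}\big(\e^{-sK_{0}}f_{0}+g(s)\big)\Big]\,ds.
\end{equation*}

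I would run the contraction in a small ball of $\mathcal{E}=\mathcal{C}([0,1];B^{a,a}(\R^{6}))$ (if necessary complemented by an auxiliary time-weighted norm carrying a little extra velocity regularity, in the spirit of the weighted norms of Subsection~\ref{Sub41} and of the smoothing estimate~\eqref{reg}), with $\eps_{0}$ small; since all heat estimates of Subsection~\ref{sect36} are uniform for $t\in(0,1]$ and $\eps_{0}\in[0,1]$, smallness of $\eps_{0}$ alone closes the argument. For the Poisson factors, Lemma~\ref{convolmoins} gives $\|S_{0}(s)\|_{L^{\infty}}\leq Cs^{a/3-\eps}\big(\|f_{0}\|_{L^{\infty}}+\|f_{0}\|_{B^{a}}\big)$ for any $\eps\ll1$, while Lemma~\ref{lem22}$(ii)$ (with $\eps<a-1/2$, which is where $a>1/2$ is used) gives $\|\mathcal{G}[g](s)\|_{L^{\infty}}\leq C\|g(s)\|_{B^{1/2+\eps}}\leq C\|g(s)\|_{B^{a,a}}$. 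For the semigroup factors I would write the nonlinearity as a velocity divergence, $(S_{0}+\mathcal{G}).\partial_{v}h=\partial_{v}.\big((S_{0}+\mathcal{G})h\big)$ (licit since $S_{0}+\mathcal{G}$ is a function of $x$ only), and bound $\|\L_{v}^{a}\Phi(g)(t)\|_{B}$ and $\|\L_{x}^{a}\Phi(g)(t)\|_{B}$ separately. The $\L_{v}^{a}$-part only costs $(t-s)^{-1/2}$, via $\|\L_{v}^{a}\e^{-(t-s)K_{0}}\L_{v}^{1-a}\|_{B\to B}\leq C((t-s)^{-1/2}+1)$ (Corollary~\ref{corlow}$(ii)$). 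For the $\L_{x}^{a}$-part one combines $\|\L_{x}^{a}\e^{-(t-s)K_{0}}\|_{B\to B}\leq C(t-s)^{-3a/2}$ (Proposition~\ref{lowregularity}$(iii)$) with the gain of velocity regularity of $\e^{-sK_{0}}$ acting on $f_{0}$ ($\|\L_{v}\e^{-sK_{0}}\|_{B^{a}\to B}\leq C(1+s^{-(1-a)/2})$, Corollary~\ref{corlow}$(i)$) and with the vanishing $s^{a/3-\eps}$ of $S_{0}(s)$, distributing the $x$- and $v$-derivatives between the two semigroup factors. The resulting time-convolutions are handled by~\eqref{Int} (or a direct computation on $[0,1]$), and it is exactly at this stage that the restriction $a<3/4$ is used. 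This gives $\|\Phi(g)\|_{\mathcal{E}}\leq C\eps_{0}(1+\|g\|_{\mathcal{E}})^{2}$ and the corresponding Lipschitz estimate, so for $\eps_{0}$ small $\Phi$ contracts a small ball; its fixed point $g$ (with $\Phi(g)(0)=0$ and continuity in $t$ from dominated convergence) yields $f=\e^{-tK_{0}}f_{0}+g\in\mathcal{C}([0,1];B^{a,a})$ solving~\eqref{int}.

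The $L^{\infty}$ bound and uniqueness are then routine. Since $\|E(t)\|_{L^{\infty}(\R^{3})}\leq C\|f(t)\|_{B^{1/2+\eps}}\leq C\|f(t)\|_{B^{a,a}}$ by Lemma~\ref{lem22}$(ii)$, the field $E$ lies in $L^{\infty}([0,1]\times\R^{3})$; as $f_{0}\in L^{\infty}\cap L^{2}$ (recall $B^{a,a}\subset B\subset L^{2}$ because $\mminf$ is bounded), $f$ solves~\eqref{lkfp} with bounded field and Proposition~\ref{propos}$(ii)$ (or Corollary~\ref{lemMax1}) gives $\|f(t)\|_{L^{\infty}}\leq\e^{dt}\|f_{0}\|_{L^{\infty}}$ on $[0,1]$, after approximating $f_{0}$ by smooth data if needed. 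Uniqueness in $\mathcal{C}([0,1];B^{a,a})\cap L^{\infty}([0,1];L^{\infty})$ follows by applying the same Duhamel estimates to the difference of two solutions, the constant being $<1$ for $\eps_{0}$ small.

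The main obstacle is the $\L_{x}^{a}$-component of the $B^{a,a}$-estimate of the Duhamel term. The $x$-smoothing $\|\L_{x}^{a}\e^{-tK_{0}}\|_{B\to B}\sim t^{-3a/2}$ is strong (and optimal for quadratic potentials), and the velocity derivative in the nonlinearity threatens an additional, non-integrable $t^{-1/2}$ if it is simply pulled out against $\e^{-(t-s)K_{0}}$. The way around it combines: (a) the divergence structure $(S_{0}+\mathcal{G}).\partial_{v}h=\partial_{v}.((S_{0}+\mathcal{G})h)$; (b) routing the velocity derivative on the $\e^{-sK_{0}}f_{0}$ part through the already-regularising factor $\e^{-sK_{0}}$, which costs only the integrable $s^{-(1-a)/2}$ of Corollary~\ref{corlow}$(i)$; (c) the vanishing $s^{a/3-\eps}$ of $S_{0}(s)$ near $s=0$ from Lemma~\ref{convolmoins}; and (d) for the $\partial_{v}g$ contribution, the use of $g(0)=0$ and the auxiliary velocity-regularity carried in the weighted norm. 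The condition $1/2<a<3/4$ is precisely what makes all the resulting time-convolution integrals converge, which also explains why more regularity on $f_{0}$ is needed here than in dimension $d=2$.
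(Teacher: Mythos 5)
Your overall strategy is the paper's: the same decomposition $f=\e^{-tK_{0}}f_{0}+g$ around the low-regularity semigroup of Subsection~\ref{sect36}, the same use of Lemma~\ref{convolmoins} for $S_{0}$, of Lemma~\ref{lem22} for the field, of Proposition~\ref{lowregularity}/Corollary~\ref{corlow} for the Duhamel term, and of Proposition~\ref{propos} for the $L^{\infty}$ bound. (Running the fixed point on $g$ alone rather than on the pair $(g,F)$ with the weighted norm $Y_{\gamma}$, as the paper does, is a cosmetic difference.) However, there is a genuine gap at exactly the point you flag as ``the main obstacle'', namely the $\Lambda_{x}^{a}$-component of the estimate applied to the $\partial_{v}g$ contribution, and your item (d) does not resolve it. If $g$ carries only $\Lambda_{v}^{a}$ regularity, the only way to make sense of $\partial_{v}g$ is to shift $\Lambda_{v}^{1-a}$ onto the outer semigroup, and Corollary~\ref{corlow}$(iii)$ with $\alpha=\beta=a$ then costs $(t-s)^{-1/2+a/2-3a/2}=(t-s)^{-1/2-a}$, which is \emph{not} integrable since $a>1/2$; the weights $s^{\gamma}$ or $s^{-\delta}$ live at $s=0$ and cannot save a singularity at $s=t$. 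So the contraction does not close in $\mathcal{C}([0,1];B^{a,a})$ alone, and the parenthetical ``if necessary complemented by a little extra velocity regularity'' is in fact the whole point.

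Quantitatively, shifting only $\Lambda_{v}^{1-\beta}$ onto the outer semigroup requires $g(s)\in B^{0,\beta}$ and yields the exponent $-1/2+\beta/2-3a/2$, which is integrable iff $\beta>3a-1$. Since $a>1/2$ forces $3a-1>a$, one always needs strictly more velocity regularity on $g$ than $a$, and for $a$ near $2/3$ one needs $\beta$ near $1$ --- not ``a little extra''. The paper's resolution is to take $\beta=1$: the unknown is sought in $X^{a,1}$, so that $\partial_{v}g=(\partial_{v}\Lambda_{v}^{-1})\Lambda_{v}g$ costs nothing on the outer semigroup in the $\Lambda_{x}^{a}$-estimate (only the integrable $(t-s)^{-3a/2}$ remains, for $a<2/3$), while the full velocity derivative of $\Phi_{1}(g,F)$ is recovered from the $(t-s)^{-1/2}$ smoothing of $\e^{-(t-s)K_{0}}$, and the derivative of the data term $\Lambda_{v}\e^{-sK_{0}}f_{0}$ is paid by the integrable $s^{-(1-a)/2}$ of Corollary~\ref{corlow}$(i)$, exactly your ingredient (b). To repair your write-up you must therefore fix the auxiliary norm to carry $\Lambda_{v}^{\beta}g$ with $\beta\in(3a-1,1]$ (the paper takes $\beta=1$), verify that $\Phi$ maps this strengthened space into itself, and note that this is also where the upper restriction on $a$ enters.
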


We write
\begin{equation*}
f=\e^{-tK_{0}}f_{0}+g,\qquad E=E_{0}+ F,
\end{equation*}
where $U_{0},E_{0}$ and $F$ are defined by
\begin{equation*}
 \partial_x \Uzero=E_{0}=-  \frac{1}{|\S^{2}|}\frac{x}{|x|^{3}}\star \int f_{0}dv,\qquad F=-  \frac{1}{|\S^{2}|}\frac{x}{|x|^{3}}\star \int g dv.
\end{equation*}
In the regime $0\leq t\leq1$, the effective Fokker-Planck operator is given by
\begin{equation*}
K_{0} = v.\D_{x} -\D_x \Vzero(x).\D_{v}- \D_{v}.\left(\D_{v}+v\right),
\end{equation*}
where $\Vzero=V_{e}+\eps_{0}\Uzero$. The mild formulation of~\eqref{vpfp}, using $K_{0}$, is therefore
\begin{equation}\label{int2}
  \left\{
  \begin{aligned}
   &  f(t) = e^{-tK_{0}}f_0 +\eps_{0} \int_0^t e^{-(t-s)K_{0}}  (E(s)-E_{0})\D_v f(s)ds, \\
   & E(t)    = -  \frac{1}{|\S^{2}|} \frac{x}{|x|^3}\star_x \int f(t)dv.
    \end{aligned}
    \right.
\end{equation}

 We construct the solution with a  fixed point argument on $(g,F)$, and therefore we define the map   $\Phi=(\Phi_{1},\Phi_{2})$ given by
\begin{equation*}
\Phi_{1}(g,F)(t)= \eps_{0}\int_0^t \e^{-(t-s)K_{0}}   F(s) \D_v \big(\e^{-sK_{0}} f_{0}+g(s) \big)ds
\end{equation*}
\begin{equation*}
\Phi_{2}(g,F)(t)=-  \frac{1}{|\S^{2}|} \frac{x}{|x|^{3}}\star \int_{\R^{3}}\Big[ \big(e^{-tK_{0}}-1\big)f_{0}+\eps_{0}\int_0^t \e^{-(t-s)K_{0}}   F(s) \D_v \big( \e^{-sK_{0}}f_{0}+g(s) \big)ds\Big],
\end{equation*}
and we observe that $(f,E)$ solves~\eqref{int2} if and only if $(g,F)=\Phi(g,F)$.  For $\a,\b,\gamma  \geq \Bk0$ define the norms
\begin{equation*}
\|g\|_{X^{\a,\b}}=\sup_{0\leq t\leq1}  \|g(t,.)\|_{B_{x,v}^{\a,\b}(\R^{6})},
\end{equation*}
\begin{equation*}
\|F\|_{Y_{\gamma}}=\sup_{0\leq t\leq1} t^{-\gamma}\|F(t,.)\|_{L^{\infty}(\R^{3})},
\end{equation*}
define  the Banach space
$$Z:=X^{\a,\b}\times Y_{\gamma},\qquad \text{with}\qquad \|(h,G)\|_{Z}=\max\big(\|  h\|_{X^{\a,\b}},  \|G\|_{Y_{\gamma}}\big),$$
and   denote by $\Gamma_{R}$ the ball of radius $R$.

In the sequel we fix
\begin{equation*}
 \gamma={a/3-\eps},\qquad \a=a,\qquad \b=1,
\end{equation*}
for some $\eps\ll 1$.
\medskip

The end of this subsection is devoted to the proof of Proposition~\ref{Prop51}. We assume that $g_{0}\in B^{a,a}$, for some $a>1/2$. In the sequel, we write $K=K_{0}$.\medskip

{\bf Step1: $\Phi$ maps some ball $\Gamma_{R} \subset Z$ into itself}\medskip

$\bullet$ Firstly, we estimate $ \Phi_{1}(g,F)$ in $X^{0,1}$.
By~\eqref{deriv}, we have for all $0\leq t\leq 1$
\begin{eqnarray}
\| \Lambda_{v} \Phi_{1}(g,F)(t)\|_{B^{}}&\leq & \eps_{0}\int_{0}^{t}   \| \Lambda_{v}\e^{-(t-s)K}  F(s)\D_v \big(\e^{-sK}f_{0}+g(s)\big)\|_{B^{}}ds \nonumber\\
&\leq & C\eps_{0} \int_{0}^{t}  \|F\|_{L^{\infty}(\R^{3})} \| \Lambda_{v}\e^{-(t-s)K} \|_{B}\|  \Lambda_{v} \big(\e^{-sK}f_{0}+g(s)\big)\|_{B^{}}ds,\label{53}
\end{eqnarray}
and we estimate each factor in the previous integral thanks to the low regularity subsection results.\medskip

\underline{Estimation of $\|  \Lambda_{v} \big(\e^{-sK}f_{0}+g(s)\big)\|_{B^{}}$:} To begin with, we use point $(i)$ of Corollary~\ref{corlow} \Bk to estimate $\|  \Lambda_{v} \e^{-sK}f_{0} \|_{B^{}} $. Since $f_{0}\in B^{a,a}$ for some $a>1/2$, then for $\delta=1/2-a/2$ we have
\begin{equation*}
\|  \Lambda^{}_{v} \e^{-sK}f_{0} \|_{B^{}}\leq Cs^{-\delta}\|f_{0}\|_{B^{a}}.
\end{equation*}
 This gives \Bk for $0\leq s\leq t\leq 1$
\begin{eqnarray}
\|  \Lambda_{v} \big( \e^{-sK}f_{0}+g(s)\big)\|_{B^{}} &\leq & \|\e^{-sK}f_{0}\|_{B^{0,1}}+\|g(s) \|_{B^{0,1}}\nonumber \\
&\leq & C+C s^{-\delta}\|f_{0}\|_{B^{a}}+  \|g\|_{X^{a,1}} \nonumber \\
&\leq & C s^{-\delta}(1+\|g\|_{X^{a,1}}).\label{54}
\end{eqnarray}

\underline{Estimation of $ \|F \|_{L^{\infty}(\R^{3})}$:}
By   definition of the space $Y_{\gamma}$ we have
\begin{equation}\label{55}
  \|F \|_{L^{\infty}(\R^{3})} \leq    s^{\gamma}  \|F\|_{Y_{\gamma}}.
 \end{equation}
 \medskip

 \underline{Estimation of $  \| \Lambda_{v}\e^{-(t-s)K} \|_{B^{}\to B^{}}$:}
By  point $(ii)$ in Proposition~\ref{lowregularity}  we have
\begin{equation}\label{56}
   \| \Lambda^{}_{v}\e^{-(t-s)K} \|_{B^{}\to B^{}} \leq C(t-s)^{-1/2}.
 \end{equation}

Therefore by~\eqref{53},~\eqref{54},~\eqref{55} and~\eqref{56}, we have
 \begin{eqnarray*}
 \| \Lambda_{v} \Phi_{1}(g,F)(t)\|_{B^{}} \leq   C \eps_{0}(1+\|g\|_{X^{a,1}})\|F\|_{Y_{\gamma}}    \int_{0}^{t} s^{\gamma-\delta}(t-s)^{-1/2}     ds\nonumber \\
 \leq    C\eps_0(1+\|g\|_{X^{a,1}})\|F\|_{Y_{\gamma}} t^{\gamma-\delta+1/2}.
 \end{eqnarray*}
  As a consequence (using that $\gamma-\delta+1/2\geq 0$) we have proved
\begin{equation}\label{B1}
 \|  \Phi_{1}(g,F)\|_{X^{0,1}}\leq   C\eps_0(1+\|g\|_{X^{a,1}})\|F\|_{Y_{\gamma}}\leq C \eps_{0}\big(1+ \|  (g,F)\|_{Z}\big)^{2}.
\end{equation}

$\bullet$ We estimate $ \Phi_{1}(g,F)$ in $X_{}^{a,0}$. With the same arguments and the bound given in point $(iii)$ of Corollary~\ref{corlow}, for all $t\geq 0$ we obtain
 \begin{eqnarray}
 \| \Lambda^{a}_{x} \Phi_{1}(g,F)(t)\|_{B^{}}
& \leq& \eps_{0}\int_{0}^{t}   \| \Lambda^{a}_{x}\e^{-(t-s)K}  F(s)\D_v \big( \e^{-sK}f_{0}+g(s)\big)\|_{B}ds \label{phi1x}\\
&\leq & C\eps_{0} \int_{0}^{t}  \|F \|_{L^{\infty}(\R^{3})} \| \Lambda^{a}_{x}\e^{-(t-s)K}  \|_{B }\|  \Lambda^{}_{v} \big( \e^{-sK}f_{0}+g(s)\big)\|_{B^{}}ds\nonumber \\
&\leq & C \eps_{0}(1+\|g\|_{X_{}^{a,1}})\|F\|_{Y_{\gamma}}   \int_{0}^{t} s^{\gamma-\delta}\big((t-s)^{ -3a/2} +1\big)   ds \nonumber\\
&\leq  & C \eps_{0}(1+\|g\|_{X_{}^{a,1}})\|F\|_{Y_{\gamma}}     \big(t^{\gamma-\delta+1-3a/2}+t^{\gamma-\delta+1}\big).\nonumber
\end{eqnarray}
 This  in turn implies (observing that $\gamma-\delta+1-3a/2>0$ provided that $a<3/4$)
\begin{equation}\label{B2}
 \|  \Phi_{1}(g,F)\|_{X_{}^{a,0}}\leq  C\eps_0(1+\|g\|_{X_{}^{a,1}})\|F\|_{Y_{\gamma}}\leq C \eps_{0}\big(1+ \|  (g,F)\|_{Z}\big)^{2}.
\end{equation}
\medskip

$\bullet$ We turn to the estimation of $\|\Phi_{2}(g,F)\|_{Y_{\gamma}} $. We apply~\eqref{211} and~\eqref{claim} with
\begin{equation*}
h_{0}=\big(e^{-tK}-1\big)f_{0}+\eps_{0}\int_0^t \e^{-(t-s)K}   F(s) \D_v  \big( \e^{-sK}f_{0}+g(s)\big)ds,
\end{equation*}
then for all $0\leq t\leq 1$
\begin{equation*}
\|\Phi_{2}(g,F)(t)\|_{L^{\infty}(\R^{3})} \leq  C_{0}  t^{a/3-\eps} + C\eps_{0}\int_{0}^{t}   \| \Lambda^{1/2+\eps}\e^{-(t-s)K}   F(s)\D_v  \big( \e^{-sK}f_{0}+g(s)\big)\|_{B^{}}ds,
\end{equation*}
where we used Lemma~\ref{convolmoins}.

  To control the second term, we can proceed as in~\eqref{phi1x} with $a$ replaced by $1/2+\eps$. Actually we have
\begin{equation*}
\|\Phi_{2}(g,F)(t)\|_{L^{\infty}(\R^{3})} \leq
 C_{0}  t^{a/3-\eps} + C\eps_{0} \int_{0}^{t}  \|F \|_{L^{\infty}(\R^{3})} \| \Lambda^{1/2+\eps}\e^{-(t-s)K}  \|_{B^{}}\|  \Lambda^{}_{v} \big( \e^{-sK}f_{0}+g(s)\big)\|_{B^{}}ds,
\end{equation*}
 and we get
\begin{eqnarray*}
\|\Phi_{2}(g,F)(t)\|_{L^{\infty}(\R^{3})} &\leq& C_{0}  t^{a/3-\eps} +C \eps_{0}(1+\|g\|_{X_{}^{a,1}})\|F\|_{Y_{\gamma}}      \int_{0}^{t} s^{\gamma-\delta}  \big((t-s)^{-\frac32(\frac12+\eps)}+1\big)   ds \\
&\leq &C_{0}  t^{a/3-\eps} +C \eps_{0}(1+\|g\|_{X_{}^{a,1}})\|F\|_{Y_{\gamma}}       \big(      t^{\gamma-\delta+1/4-3\eps/2}+ t^{\gamma-\delta+1}   \big)\\
&\leq &    t^{\gamma}\big[ C_{0}+C \eps_{0}(1+\|g\|_{X_{}^{a,1}})\|F\|_{Y_{\gamma}}     (      t^{ -\delta+1/4-3\eps/2}+ t^{ -\delta+1}   )\big],
 \end{eqnarray*}
 since $\gamma=a/3-\eps$. Therefore
\begin{equation}\label{B3}
\|\Phi_{2}(g,F)\|_{Y_{\gamma}} \leq C_{0}+C\eps_0(1+\|g\|_{X_{}^{a,1}})\|F\|_{Y_{\gamma}}\leq C \eps_{0}\big(1+ \|  (g,F)\|_{Z}\big)^{2},
\end{equation}
provided that $\delta<1$ and  $1/2-a/2=\delta <1/4-3\eps/2$. This latter condition can be  satisfied for $\eps>0$ small enough, since $a>1/2$.\medskip

As a result, by~\eqref{B1},~\eqref{B2} and~\eqref{B3} there exists $C>0$ such that
\begin{equation*}
\|\Phi(g,F)\|_{Z} \leq  C_{0}+C \eps_{0}\big(1+ \|  (g,F)\|_{Z}\big)^{2}.
\end{equation*}
Therefore  we can  choose $\eps_{0}>0$ small enough so that  $\Phi$ maps the ball $\Gamma_{2C_{0}}\subset Z$  into itself.\ligne

{\bf Step2: $\Phi$ is a contraction of $\Gamma_{2C_{0}}$}\medskip

With exactly the  same arguments, we can also prove the contraction estimate
\begin{equation*}
\|  \Phi_{1}(g_{2},F_{2})-\Phi_{1}(g_{1},F_{1})\|_{Z} \leq C\eps_0\|  (g_{2}-g_{1},F_{2}-F_{1})\|_{Z}\big(1 +\|(g_{1},F_{1})\|_{Z}+\|(g_{2},F_{2})\|_{Z}\big).
\end{equation*}
We do not write the details.\ligne

As a conclusion, if $\eps_0>0$ is small enough, $\Phi$ has a unique fixed point in $\Gamma_{2C_{0}}\subset Z$. This  shows \Bk the existence of a unique   $ g\in   \mathcal{C}\big(\,[0,1] \,;\,  B^{a,1}(\R^{6}) \big)$  such that $f= \e^{-tK}f_{0}+g$ solves~\eqref{vpfp}.

\subsection{Long time analysis: $t\in ]0,+\infty[$} We now study long time existence and trend to equilibrium. We use here the spaces defined in the Subsection~\ref{Sub41}. Let     $1/2<\b<1$ and $0<\a<2/3$   be such that $\a<(1+\beta)/3$. Fix also
\begin{equation*}
0<\delta<\beta/2-1/4, \qquad 0<\s\leq \frac12 \min\big(1-\b+\a,1\big),
\end{equation*}
 which is realised for, say, $\s=1/12$. From now, we assume that all these conditions are satisfied. \medskip

 In this section we prove the following result
\begin{prop} \ph \label{Prop52}
Let $d=3$. Assume that    $1/2<a<2/3$ and   that $f_{0}\in B^{a,a}(\R^6) $. Assume moreover that Assumptions~\ref{assumption1} and~\ref{assumption2} are  satisfied. Then if $\vert \eps_0\vert$ is small enough, there exists  a unique local mild solution $f$ to~\eqref{vpfp} which reads
\begin{equation*}
 f(t)=\mminf+\e^{-tK}(f_{0}-\mminf)+h(t)
\end{equation*}
where $h\in X^{\a,\b}_{\delta}$, thus
\begin{equation*}
 h\in \mathcal{C}\big(\,]0,+\infty[ \,;\,  B^{\a,\b}(\R^{6}) \big).
\end{equation*}
\end{prop}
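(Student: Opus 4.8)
The plan is to run a fixed point argument of exactly the same shape as in the proof of Theorem~\ref{thm1}, but in the weighted-in-time space $Z = X^{\a,\b}_\delta \times Y$ attached to the true Fokker--Planck operator $K = \Kinf$ (not the low-regularity $K_0$, since now we work on $]0,+\infty[$ and want exponential decay). Writing $f = \mminf + \e^{-tK}(f_0-\mminf) + h$ and $E = \Einf + F_0 + G$ with $F_0 = -|\S^{d-1}|^{-1}\frac{x}{|x|^d}\star\int \e^{-tK}(f_0-\mminf)\,dv$ and $G = -|\S^{d-1}|^{-1}\frac{x}{|x|^d}\star\int h\,dv$, one checks as before that solving the mild equation~\eqref{int} is equivalent to the fixed point $(h,G) = \Phi(h,G)$ with $\Phi_1,\Phi_2$ as in Subsection~\ref{Sub41}. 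The task is to show that for $|\eps_0|$ small $\Phi$ contracts the unit ball $\Gamma_1 \subset Z$, using the decay estimates of Section~\ref{sect2} restricted to $B^\perp$ (Proposition~\ref{decay}, Corollary~\ref{coro25}, Proposition~\ref{gammaplusmoins}) together with the Poisson estimates of Subsection~3.4 (Lemma~\ref{lem22}, Corollary~\ref{lem23}) and the time-integral Lemma~\eqref{Int}.

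First I would estimate $\Phi_1(h,G)$ in $X^{\a,\b}_\delta$. Splitting the $B^{\a,\b}$ norm into the $\Lambda_x^\a$ and $\Lambda_v^\b$ pieces, one writes for $0<s<t$
\begin{equation*}
\|\Lambda_x^\a \Phi_1(h,G)(t)\|_B \leq C\eps_0 \int_0^t \|F_0(s)+G(s)\|_{L^\infty} \,\|\Lambda_x^\a \e^{-(t-s)K}\Lambda_v^{1-1}\cdots\|\cdots
\end{equation*}
—more precisely one uses $\partial_v = \Lambda_v\cdot(\Lambda_v^{-1}\partial_v)$ and~\eqref{deriv} to move one power of $\Lambda_v$ onto the semigroup, so that the relevant operator norm is $\|\Lambda_x^\a \e^{-(t-s)K}\Lambda_v^{1}\|_{B^\perp}$, estimated by Corollary~\ref{coro25} (with $\b=0$ there) by $C((t-s)^{-1/2-3\a/2}+1)e^{-\kappa(t-s)}$; the $\Lambda_v^\b$ piece uses instead $\|\Lambda_v^\b \e^{-(t-s)K}\Lambda_v\|_{B^\perp} \leq C((t-s)^{-1/2}+1)e^{-\kappa(t-s)}$. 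The factor $\|F_0(s)+G(s)\|_{L^\infty}$ is controlled by~\eqref{e000} (the $d=3$ estimate, costing $\|f_0-\mminf\|_{B^{1/2+\eps}}$, which is finite since $\mminf$ is Schwartz and one may take $a \geq 1/2+\eps$ — here one needs $a>1/2$ strictly and $\eps$ small) and by $e^{-\s\kappa s}\|G\|_Y$ for the $G$ part. The factor $\|\mminf + \e^{-sK}(f_0-\mminf) + h(s)\|_{B^{\a',\b'}}$ with the appropriate indices is bounded using $\mminf \in B^{\a,\b}$, the regularizing bounds~\eqref{stxdecay}--\eqref{stvdecay} applied to $\e^{-sK}(f_0-\mminf)$ (which brings a factor $s^{-3\a/2}$ or $s^{-\b/2}$, hence the need for $\delta>0$ in the weight), and $\|h(s)\|_{B^{\a,\b}} \leq C s^{-\delta}(1+s^\delta)e^{-\s\kappa s}\|h\|_{X^{\a,\b}_\delta}$. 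Collecting powers of $s$ and $t-s$ and applying~\eqref{Int} with the $e^{-\kappa(t-s)/2}$ left over after absorbing $e^{-\s\kappa s}$, one gets $\|\Phi_1(h,G)\|_{X^{\a,\b}_\delta} \leq C\eps_0(1+\|(h,G)\|_Z)^2$, provided the exponent bookkeeping closes — this is where the hypotheses $\a<(1+\b)/3$, $0<\delta<\b/2-1/4$ and $0<\s\leq\frac12\min(1-\b+\a,1)$ enter (one needs $-1+\gamma_1 > -1$, i.e. each singular exponent $>-1$: for the $\Lambda_x$ line $s$ carries $-3\a/2$ and $t-s$ carries $-1/2-3\a/2$, whose sum must exceed $-1$, giving $\a<(1+\b)/3$ type constraints once $\delta$ and the $B^{1/2+\eps}$ loss are included; and one needs $\s$ small enough that subtracting it from $\kappa$ still leaves room). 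The estimate of $\Phi_2(h,G)$ in $Y$ is entirely parallel: apply~\eqref{claim} to write $\|\Phi_2(h,G)(t)\|_{L^\infty} \leq C\|\Lambda^{1/2+\eps}\Phi_1(h,G)(t)\|_B$ and repeat, using~\eqref{prod1} with $\a$ replaced by $1/2+\eps$.

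Step 2, the contraction, is the same computation run on differences: since $\Phi$ is affine-quadratic in $(h,G)$ (the nonlinearity is $(F_0+G)\partial_v h$ plus terms linear in $h$ or $G$), one gets $\|\Phi(h_2,G_2)-\Phi(h_1,G_1)\|_Z \leq C\eps_0(1+\|(h_1,G_1)\|_Z+\|(h_2,G_2)\|_Z)\|(h_2-h_1,G_2-G_1)\|_Z$, so for $\eps_0$ small $\Phi$ is a contraction of $\Gamma_1$ and Banach's fixed point theorem produces the unique $h \in X^{\a,\b}_\delta$; continuity $h \in \mathcal{C}(]0,+\infty[;B^{\a,\b})$ follows from the mild formulation and the strong continuity of $\e^{-tK}$ on $B^{a,a}$ noted after~\eqref{maxi}. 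The main obstacle is the exponent bookkeeping: one must verify that every time-singularity produced by the regularizing semigroup estimates (the $t^{-3\a/2}$, $t^{-\b/2}$, $(t-s)^{-1/2-3\a/2}$, and the $\eps$-loss from the $d=3$ Poisson estimate) is integrable after multiplication against the $s^{-\delta}$ weight, and that the surviving exponential rate $\kappa-\s\kappa>0$ can be matched to the prescribed $e^{-\s\kappa t}$ — this is exactly why the tight inequalities on $(\a,\b,\delta,\s)$ are imposed, and checking they suffice (via~\eqref{Int}) is the technical heart; everything else is a routine adaptation of Subsection~4.2.
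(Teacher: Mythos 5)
Your overall architecture is the paper's: same decomposition $f=\mminf+\e^{-tK}g_0+h$ with $g_0=f_0-\mminf\in B^{\perp}$, same fixed point for $(h,G)$ in $Z=X^{\a,\b}_{\delta}\times Y$, same ingredients (Corollary~\ref{coro25}, the Poisson bound~\eqref{e000}, the convolution Lemma~\eqref{Int}). However, there is a genuine gap in the step you yourself call the technical heart, namely how the velocity derivative $\D_v$ is distributed. You factor $\D_v=\Lambda_v\cdot(\Lambda_v^{-1}\D_v)$ and put the \emph{full} power $\Lambda_v$ onto the semigroup, so that the $x$-component of the output is controlled by $\|\Lambda_x^{\a}\e^{-(t-s)K}\Lambda_v\|_{B^{\perp}}\lesssim ((t-s)^{-1/2-3\a/2}+1)\e^{-\kappa(t-s)}$. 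The exponent $-1/2-3\a/2$ is $\leq -1$ as soon as $\a\geq 1/3$, and then the time integral diverges: in~\eqref{Int} \emph{each} exponent must separately exceed $-1$ (the integral $\int_0^t s^{-1+\gamma_1}(t-s)^{-1+\gamma_2}ds$ requires $\gamma_1>0$ \emph{and} $\gamma_2>0$), so your statement that only the \emph{sum} of the $s$- and $(t-s)$-exponents must exceed $-1$ is incorrect, and it is precisely this error that makes your scheme look as if it closes. Since the proposition must cover $\a$ up to $(1+\b)/3$, and in particular $\a=a>1/2$ is the case fed into Theorem~\ref{thm2}, the range where your bookkeeping fails is exactly the range that matters. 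Relatedly, your claimed bound $\|\Lambda_v^{\b}\e^{-(t-s)K}\Lambda_v\|_{B^{\perp}}\lesssim((t-s)^{-1/2}+1)\e^{-\kappa(t-s)}$ is false as written; estimate~\eqref{prod2} has $\Lambda_v^{1-\b}$, not $\Lambda_v$, on the right, and with the full $\Lambda_v$ one pays $(t-s)^{-1/2-\b/2}$.

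The paper's resolution is to split $\D_v=\Lambda_v^{1-\b}\cdot\Lambda_v^{\b}\cdot(\Lambda_v^{-1}\D_v)$ using~\eqref{deriv}: only $\Lambda_v^{1-\b}$ lands on the semigroup, giving $\|\Lambda_x^{\a}\e^{-(t-s)K}\Lambda_v^{1-\b}\|_{B^{\perp}}\lesssim((t-s)^{-1/2+\b/2-3\a/2}+1)\e^{-\kappa(t-s)}$ by~\eqref{prod1}, which is integrable precisely when $\a<(1+\b)/3$; the remaining $\Lambda_v^{\b}$ lands on the argument $\mminf+\e^{-sK}g_0+h(s)$, producing the factor $s^{-\delta}$ with $\delta=(\b-a)/2$ via Lemma~\ref{leminterpol}. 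This is the whole reason the fixed-point space $X^{\a,\b}_{\delta}$ carries $\b$ derivatives in $v$ and the weight $t^{\delta}/(1+t^{\delta})$: the $v$-regularity of $h$ is not an a posteriori bonus but is consumed at each iteration to soften the singularity of the semigroup. Your plan bounds the argument only in $B$ (or leaves the indices unspecified), so it never uses the $\b$-regularity built into $Z$ and cannot recover the admissible range of $\a$. With this redistribution inserted, the rest of your outline (the $\Phi_2$ estimate via~\eqref{claim} with $\Lambda^{1/2+\eps}$, the contraction step, and the continuity statement) matches the paper.
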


Since $f_{0}-\mminf\in B^{\perp}$, and by definition of the space $X^{\a,\b}_{\delta}$, we obtain the exponentially fast convergence of $f$ to $\mminf$. Notice that in the previous result, the parameters $(\a,\b)$ can be chosen independently from $a$.  If \Bk one chooses  $\a=a$ and $\beta$  close to 1, then the result of Proposition~\ref{Prop52} combined with Proposition~\ref{Prop51} and Corollary~\ref{lemMax1} implies Theorem~\ref{thm2}.\medskip

We now turn to the proof  of Proposition~\ref{Prop52}.  Let $g_{0}:=f_{0}-\mminf \in B^{a,a}\cap B^{\perp}$, for some $a>1/2$.
We denote by $\Gamma_{1}$ the unit ball in $Z$, and in the sequel, we use the same notations and decomposition as in Section~\ref{Sub41}.  Then \medskip

{\bf Step1: $\Phi$ maps the ball $\Gamma_{1} \subset Z$ into itself}\medskip

$\bullet$ Firstly, we estimate $ \Phi_{1}(h,G)$ in $X_{\delta}^{0,\b}$.
By~\eqref{deriv}, we have for all $t\geq 0$
\begin{multline}
\| \Lambda^{\b}_{v} \Phi_{1}(h,G)(t)\|_{B^{}}\leq  \eps_{0}\int_{0}^{t}   \| \Lambda^{\b}_{v}\e^{-(t-s)K}  \big(F_{0}+G\big)\D_v \big(\mminf+\e^{-sK}g_{0}+h(s)\big)\|_{B^{}}ds  \\
\leq  C\eps_{0} \int_{0}^{t}  \|F_{0}+G\|_{L^{\infty}(\R^{3})} \| \Lambda^{\b}_{v}\e^{-(t-s)K} \Lambda^{1-\b}_{v} \|_{B^{\perp}}\|  \Lambda^{\b}_{v} \big(\mminf+\e^{-sK}g_{0}+h(s)\big)\|_{B^{}}ds,\label{214}
\end{multline}
and we estimate each factor in the previous integral.\medskip

\underline{Estimation of $\|  \Lambda^{\b}_{v} \big(\mminf+\e^{-sK}g_{0}+h(s)\big)\|_{B^{}}$:} To begin with, we use Lemma~\ref{leminterpol} to estimate the term $\|  \Lambda^{\b}_{v} \e^{-sK}g_{0} \|_{B^{}} $. Since $g_{0}\in B^{a,a}$ for some $a>1/2$, there exists $0<\delta<\beta/2-1/4$ such that
\begin{equation*}
\|  \Lambda^{\b}_{v} \e^{-sK}g_{0} \|_{B^{}}\leq C(1+s^{-\delta})\|g_{0}\|_{B^{a}}.
\end{equation*}
Then, we use that  $\mminf \in B^{\a,\b}$, and by~\eqref{dv} we obtain
\begin{eqnarray}
\|  \Lambda^{\b}_{v} \big(\mminf+\e^{-sK}g_{0}+h(s)\big)\|_{B^{}} &\leq & \|  \mminf \|_{B^{0,\beta}} +\|\e^{-sK}g_{0}\|_{B^{0,\beta}}+\|h(s) \|_{B^{0,\beta}}\nonumber \\
&\leq & C+C (1+s^{-\delta})\|g_{0}\|_{B^{a}}+ (1+s^{-\delta})\|h\|_{X_{\delta}^{\a,\b}} \nonumber \\
&\leq & C (1+s^{-\delta})(1+\|h\|_{X_{\delta}^{\a,\b}}).\label{37}
\end{eqnarray}

\underline{Estimation of $ \|F_{0}+G\|_{L^{\infty}(\R^{3})}$:}
By~\eqref{e000} and the definition of the space $Y_{ }$ we get for  $\eps>0$ small enough and $a>1/2$
\begin{eqnarray}\label{38}
  \|F_{0}+G\|_{L^{\infty}(\R^{3})}&\leq&   \|F_{0}\|_{L^{\infty}(\R^{3})} + \|G\|_{L^{\infty}(\R^{3})}\nonumber \\
  &\leq&   C  e^{-\kappa s/2}\|g_{0}\|_{B^{a}} + C \e^{-\s\kappa s}\|G\|_{Y_{ }}\nonumber\\
&\leq &    C  \e^{-\s\kappa s}(1+\|G\|_{Y_{ }}).
 \end{eqnarray}

 \underline{Estimation of $  \| \Lambda^{\b}_{v}\e^{-(t-s)K} \Lambda^{1-\b}_{v} \|_{B^{\perp}\to B^{\perp}}$:}
This is exactly~\eqref{prod2}, namely
\begin{equation*}
 \| \Lambda^{\b}_{v}\e^{-(t-s)K} \Lambda^{1-\b}_{v} \|_{B^{\perp}\to B^{\perp}} \leq  C\big((t-s)^{-1/2}+1\big)e^{-\kappa (t-s)}.
\end{equation*}
 ~

Therefore by~\eqref{214},~\eqref{37},~\eqref{38}, we have when $ \delta<1$
 \begin{multline*}
 \| \Lambda^{\b}_{v} \Phi_{1}(h,G)(t)\|_{B^{}} \leq \\
\begin{aligned}
&\leq C \eps_{0}(1+\|h\|_{X_{\delta}^{\a,\b}})(1+\|G\|_{Y_{ }})    \e^{-\s\kappa t}\int_{0}^{t} \big(s^{ -\delta}+1\big) \big((t-s)^{-1/2}+1\big)e^{-\kappa(t-s)/2}      ds
\end{aligned}
\end{multline*}
Now, by~\eqref{Int}, if we denote by $\eta=-\min({1/2 -\delta},0)$, we have
 \begin{eqnarray}\label{311}
 \| \Lambda^{\b}_{v} \Phi_{1}(h,G)(t)\|_{B^{}} \leq    C\eps_0(1+\|h\|_{X_{\delta}^{\a,\b}})(1+\|G\|_{Y_{ }}) \e^{-\s\kappa t} \big({\bf 1}_{\{0< t\leq 1\}}t^{-\eta}+1\big)\nonumber \\
 \leq    C\eps_0(1+\|h\|_{X_{\delta}^{\a,\b}})(1+\|G\|_{Y_{ }}) \e^{-\s\kappa t} \big(t^{-\delta}+1\big).
 \end{eqnarray}
  As a consequence we have proved
\begin{equation}\label{515}
 \|  \Phi_{1}(h,G)\|_{X_{\delta}^{0,\beta}}\leq   C\eps_0(1+\|h\|_{X_{\delta}^{\a,\b}})(1+\|G\|_{Y_{ }})\leq C \eps_{0}\big(1+ \|  (h,G)\|_{Z}\big)^{2}.
\end{equation}

$\bullet$ We estimate $ \Phi_{1}(h,G)$ in $X_{\delta}^{\a,0}$. With the same arguments and the bound~\eqref{prod1}, for all $t\geq 0$ we obtain
 \begin{multline}
 \| \Lambda^{\a}_{x} \Phi_{1}(h,G)(t)\|_{B^{}}\leq\\
\begin{aligned}
& \leq \eps_{0}\int_{0}^{t}   \| \Lambda^{\a}_{x}\e^{-(t-s)K}  \big(F_{0}+G\big)\D_v \big(\mminf+\e^{-sK}g_{0}+h(s)\big)\|_{B^{}}ds \label{516}\\
&\leq  C\eps_{0} \int_{0}^{t}  \|F_{0}+G\|_{L^{\infty}(\R^{3})} \| \Lambda^{\a}_{x}\e^{-(t-s)K} \Lambda^{1-\b}_{v} \|_{B^{\perp}}\|  \Lambda^{\b}_{v} \big(\mminf+\e^{-sK}g_{0}+h(s)\big)\|_{B^{}}ds\\
&\leq  C \eps_{0}(1+\|h\|_{X_{\delta}^{\a,\b}})(1+\|G\|_{Y_{ }})    \e^{-\s\kappa t} \int_{0}^{t} \big(s^{ -\delta}+1\big) \big((t-s)^{-1/2+\b/2-3\a/2} +1\big)   e^{-\kappa (t-s)/2}    ds \\
&\leq   C \eps_{0}(1+\|h\|_{X_{\delta}^{\a,\b}})(1+\|G\|_{Y_{ }})    \e^{-\s\kappa t}\big(t^{-\delta}+1\big),
\end{aligned}
\end{multline}
using~\eqref{Int} and the fact that $1/2+\b/2-3\a/2>0$.

 This  in turn implies
\begin{equation}\label{517}
 \|  \Phi_{1}(h,G)\|_{X_{\delta}^{\a,0}}\leq  C\eps_0(1+\|h\|_{X_{\delta}^{\a,\b}})(1+\|G\|_{Y_{ }})\leq C \eps_{0}\big(1+ \|  (h,G)\|_{Z}\big)^{2}.
\end{equation}
\medskip

$\bullet$ We turn to the estimation of $\|\Phi_{2}(h,G)\|_{Y_{ }} $. We apply~\eqref{claim} with
\begin{equation*}
h_{0}=\eps_{0}\int_0^t \e^{-(t-s)K}  \big(F_{0}(s)+G(s)\big)\D_v  \big(\mminf+\e^{-sK}g_{0}+h(s)\big)ds,
\end{equation*}
then for all $t\geq 0$
\begin{eqnarray*}
\|\Phi_{2}(h,G)(t)\|_{L^{\infty}(\R^{3})}&\leq&  C\eps_{0} \|\Lambda^{1/2+\eps} \int_0^t \e^{-(t-s)K}  \big(F_{0}(s)+G(s)\big)\D_v \big(\mminf+\e^{-sK}g_{0}+h(s)\big)ds\|_{B^{}}\\
&\leq &  C\eps_{0}\int_{0}^{t}   \| \Lambda^{1/2+\eps}\e^{-(t-s)K}  \big(F_{0}+G\big)\D_v  \big(\mminf+\e^{-sK}g_{0}+h(s)\big)\|_{B^{}}ds.
\end{eqnarray*}
Now we can proceed as in~\eqref{516} with $\a$ replaced by $1/2+\eps$. Actually we have
\begin{multline*}
\|\Phi_{2}(h,G)(t)\|_{L^{\infty}(\R^{3})}\leq  \\
\leq C\eps_{0} \int_{0}^{t}  \|F_{0}+G\|_{L^{\infty}(\R^{3})} \| \Lambda^{1/2+\eps}\e^{-(t-s)K} \Lambda^{1-\b}_{v} \|_{B^{\perp}}\|  \Lambda^{\b}_{v} \big(\mminf+\e^{-sK}g_{0}+h(s)\big)\|_{B^{}}ds,
\end{multline*}
and for $1/2<\beta<1$,  $\eps\ll 1$, by~\eqref{prod1} and~\eqref{Int} we get
\begin{multline*}
\|\Phi_{2}(h,G)(t)\|_{L^{\infty}(\R^{3})} \leq\\
\begin{aligned}
&\leq C \eps_{0}(1+\|h\|_{X_{\delta}^{\a,\b}})(1+\|G\|_{Y_{ }})    \e^{-\s \kappa t}  \int_{0}^{t} \big(s^{ -\delta}+1\big) \big((t-s)^{-\frac12+\frac\b2-\frac32(\frac12+\eps)}+1\big)   e^{-\kappa (t-s)/2}       ds \\
&\leq C \eps_{0}(1+\|h\|_{X_{\delta}^{\a,\b}})(1+\|G\|_{Y_{ }})    \e^{-\s\kappa t}       \big(    {\bf 1}_{\{0<t\leq 1\}}  t^{-1/4+\b/2-\delta-3\eps/2 }+ 1   \big)\\
&\leq C \eps_{0}(1+\|h\|_{X_{\delta}^{\a,\b}})(1+\|G\|_{Y_{ }})    \e^{-\s\kappa t},
\end{aligned}
\end{multline*}
if $\eps>0$ is chosen  small enough so that we have $\delta<\b/2-1/4-3\eps/2$. This in turn implies
\begin{equation}\label{518}
\|\Phi_{2}(h,G)\|_{Y_{ }} \leq C\eps_0(1+\|h\|_{X_{\delta}^{\a,\b}})(1+\|G\|_{Y_{ }})\leq C \eps_{0}\big(1+ \|  (h,G)\|_{Z}\big)^{2}.
\end{equation}

As a result, by~\eqref{515},~\eqref{517} and~\eqref{518} there exists $C>0$ such that
\begin{equation*}
\|\Phi(h,G)\|_{Z} \leq  C \eps_{0}\big(1+ \|  (h,G)\|_{Z}\big)^{2}.
\end{equation*}
Therefore  we can  choose $\eps_{0}>0$ small enough so that  $\Phi$ maps the ball $\Gamma_{1}\subset Z$  into itself.\ligne

{\bf Step2: $\Phi$ is a contraction of $\Gamma_{1}$}\medskip

With exactly the  same arguments, we can also prove the contraction estimate
\begin{equation*}
\|  \Phi_{1}(h_{2},G_{2})-\Phi_{1}(h_{1},G_{2})\|_{Z} \leq C\eps_0\|  (h_{2}-h_{1},G_{2}-G_{1})\|_{Z}\big(1 +\|(h_{1},G_{1})\|_{Z}+\|(h_{2},G_{2})\|_{Z}\big).
\end{equation*}
We do not write the details.\ligne

As a conclusion, if $\eps_0>0$ is small enough, $\Phi$ has a unique fixed point in $\Gamma_{1}\subset Z$. This  shows \Bk  the existence of a unique   $ h\in   \mathcal{C}\big(\,]0,+\infty[ \,;\,  B^{\alpha,\beta}(\R^{6}) \big)$  such that $f=\mminf +\e^{-tK}g_{0}+h$ solves~\eqref{vpfp}. \medskip

%%%%%%%%%%%%%%%%%%%%
%%%%%%%%%%%%%%%

\end{document}